\definecolor{darkred}{rgb}{0.65,0,0}
\definecolor{darkblue}{rgb}{0,0,0.65}
\numberwithin{figure}{section}
\numberwithin{equation}{section}
\theoremstyle{plain}% default
\newtheorem*{thm*}{Theorem}
\newtheorem{thm}{Theorem}[section]
\newtheorem{lem}[thm]{Lemma}
\newtheorem{prop}[thm]{Proposition}
\newtheorem{dfn}[thm]{Definition}
\theoremstyle{remark}
\theoremstyle{plain}
\newtheorem*{res0}{Main result for convex domains}
\newtheorem*{res1}{Main result for nearly circular domains}
\newtheorem*{res2}{Reversed Faber-Krahn}
\newtheorem{corol}[thm]{Corollary}
\theoremstyle{definition}
\newtheorem{rem}[thm]{Remark}
\newcommand\cyl{{\rm cyl}}
\newcommand\rmi{{\rm i}}
\newcommand\rmo{{\rm o}}
\newcommand\rmc{{\rm c}}
\newcommand\rms{{\rm s}}
\newcommand\cR{\mathcal{R}}
\newcounter{counter_a}
\newenvironment{myenum}{\begin{list}{{\rm(\roman{counter_a})}}%
		{\usecounter{counter_a}
			\setlength{\itemsep}{1.ex}\setlength{\topsep}{0.8ex}
			\setlength{\leftmargin}{5ex}\setlength{\labelwidth}{5ex}}}{\end{list}}
\newcommand\wt{\widetilde}
\newcommand\eps{\varepsilon}
\newcommand\lm{\lambda}
\newcommand\p{\partial}
\def\sfK{{\mathsf K}}
\newcommand\sfT{{\mathsf T}}
\newcommand\Omg{\Omega}
\newcommand{\bx}{x}
\newcommand{\bo}{0}
\newcommand{\bn}{\mathbf{n}}
\newcommand{\be}{\mathbf{e}}
\newcommand{\bd}{\mathbf{d}}
\newcommand\ii{{\mathsf{i}}}
\newcommand\dl{\delta}
\newcommand\s{\sigma}
\newcommand\tm{\times}
\newcommand{\C}{\mathbb{C}}
\newcommand{\R}{\mathbb{R}}
\newcommand{\T}{\mathbb{T}}
\newcommand{\D}{\mathbb{D}}
\newcommand{\I}{\mathbb{I}}
\newcommand{\Z}{\mathbb{Z}}
\newcommand{\N}{\mathbb{N}}
\newcommand{\sfD}{\mathsf{D}}
\newcommand\arr{\rightarrow}
\newcommand\kp{\kappa}
\newcommand\dd{{\mathsf{d}}}
\newcommand{\spn}{\mathsf{span}\,}
\def\cF{{\mathcal F}}
\def\cG{{\mathcal G}}
\def\cE{{\mathcal E}}
\def\cH{{\mathcal H}}
\def\cN{{\mathcal N}}
\def\cD{{\mathcal D}}
\def\cI{{\mathcal I}}
\def\cL{{\mathcal L}}
\def\frb{{\mathfrak b}}
\def\frt{{\mathfrak t}}
\newcommand\frq{\mathfrak{q}}
\newcommand\dom[1]{\mathrm{dom}\left(#1\right)}
\newcommand\spe[1]{\mathsf{Sp}\left(#1\right)}
\newcommand\sped[1]{\mathsf{Sp}_{\rm d}\left(#1\right)}
\newcommand\speess[1]{\mathsf{Sp}_{\rm ess}\left(#1\right)}
\renewcommand\gg{\gamma}
\renewcommand\tt{\theta}
\newcommand\Op{\sfD_\Omg}
\newcommand\OpD{\sfD_\D}
\newcommand\ov{\overline}
\title[Spectral gap for graphene quantum dots]{
A sharp upper bound on the spectral gap for graphene quantum dots}
\author{Vladimir Lotoreichik}
\address{Department of Theoretical Physics, Nuclear Physics Institute, 	Czech Academy of Sciences, 25068 \v Re\v z, Czech Republic}
\email{lotoreichik@ujf.cas.cz}
\urladdr{http:/gemma.ujf.cas.cz/~lotoreichik}
\author{Thomas Ourmi\`eres-Bonafos}
\address{CNRS \& Universite Paris-Dauphine, PSL Research University, CEREMADE,
Place de Lattre de Tassigny, 75016 Paris, France}
\email{ourmieres-bonafos@ceremade.dauphine.fr}
\urladdr{http://www.ceremade.dauphine.fr/~ourmieres/}
\subjclass[2010]{35P15, 58J50}
\begin{document}
\keywords{Dirac operator, infinite mass boundary condition, lowest eigenvalue, shape optimization.}

\begin{abstract}
The main result of this paper is a sharp upper bound on the first positive eigenvalue of Dirac operators in two dimensional simply connected $C^3$-domains with infinite mass boundary conditions. This bound is given in terms of a conformal variation, explicit geometric quantities and of the first eigenvalue for the disk. 
Its proof relies on the min-max
principle applied to the squares of these Dirac operators. A suitable test function is constructed by means of a conformal map. This general upper bound involves the norm of the derivative of the underlying conformal map in the Hardy space $\cH^2(\D)$. Then, we apply known estimates of this norm for convex and for nearly circular,
star-shaped domains in order to get explicit geometric
upper bounds on the eigenvalue.
These bounds can be re-interpreted as reverse Faber-Krahn-type inequalities under adequate geometric constraints.
\end{abstract}                                                            

\maketitle

%-----------------------------------------------------
\section{Introduction}
%-----------------------------------------------------

%-----------------------------------------------------
\subsection{Motivations and statement of the main result}
%-----------------------------------------------------
The Dirac operator defined on a bounded domain of the Euclidean space $\R^2$ attracted a lot of attention in the recent few years. Motivated by the unique properties of low energy charge carriers in grap\-hene, various mathematical questions related to these Dirac operators have arisen, and some of them have been dealt with very recently.

The question of self-adjointness is addressed, for instance, for a large class of local boundary conditions in~\cite{BFSV17a} and it covers the particular boundary conditions commonly used in the physics literature~\cite{AB08}: the so-called \emph{zigzag}, \emph{armchair}, and \emph{infinite mass} boundary conditions.

The next step is to investigate the spectral properties of these models. For instance, the spectrum of the massless Dirac operator in a bounded domain with \emph{zigzag} boundary conditions is studied in~\cite{S95}. It turns out that this spectrum exhibits an interesting behaviour: it consists of the eigenvalue $0$, being of infinite multiplicity, and of a sequence of discrete eigenvalues related to the one of the Dirichlet Laplacian in the same domain.

The structure of the spectrum of the massless Dirac operator on a bounded domain with \emph{infinite mass} boundary conditions has a different flavour. Indeed, the model is now
invariant under charge conjugation, which implies the symmetry of the spectrum with respect to the origin (moreover, this spectrum is discrete).

Note that \emph{infinite mass} boundary conditions
for the Dirac operator
arise when one considers the Dirac operator on the whole Euclidean plane $\R^2$ with an ``infinite mass"
 outside a bounded domain and zero mass
inside it. This is mathematically justified in~\cite{BCTS18,SV18} (see also~\cite{ALMR18} for a three-dimensional version and \cite{MOBP18} for a generalization to any dimension). For this reason, these boundary conditions can be viewed as the relativistic counterpart of Dirichlet boundary conditions for the Laplacian.

It is well known that for partial differential operators defined on domains the shape of the domain manifests in the spectrum.
In particular, bounds on the eigenvalues can be given in terms of various geometrical quantities. In many
cases, it is also known that the ball (the disk, in two dimensions) optimizes the lowest eigenvalue under reasonable geometric constraints.
For example, the famous Faber-Krahn inequality
for Dirichlet Laplacians
(formulated
in two dimensions) states that
\begin{equation}\label{eq:FK}
	\lm_1(\Omega) \geq \lm_1(\D)
\end{equation}
for all Lipschitz domains $\Omega\subset\R^2$ of the same area as the unit disk $\D$ 
(see~\cite{F23} and~\cite{K25}); here $\lm_1(\Omega)$ denotes the first eigenvalue of the Dirichlet Laplacian on $\Omega$. 
In the same spirit, for any convex domain $\Omega\subset\R^2$, it is proven 
in~\cite[\S 5.6]{PS51} and in~\cite[Theorem 2]{FK08} that a reverse Faber-Krahn-type inequality with a geometric pre-factor
\begin{equation}\label{eq:PS}
	\lm_1(\Omg) 
	\leq 
	\frac{|\p\Omg|}{2\rho_{\rm i}|\Omg|}\lm_1(\D),
\end{equation}
holds where $\rho_{\rm i} > 0$ is the inradius of $\Omg$, $|\Omg|$ denotes the area of $\Omg$ and $|\p\Omg|$ stands for its perimeter.  Related upper bounds for the lowest Dirichlet eigenvalue are obtained e.g. in~\cite{PW61, P60}, see also the numerical study~\cite{AF06}.
Further spectral optimization results for the Dirichlet Laplacian can be found in the monographs~\cite{Henrot,Henrot2}; see also the references therein.

For the two-dimensional massless Dirac operator
$\Op$ with infinite mass boundary conditions
on a bounded, simply connected, $C^2$-domain $\Omega$
a lower bound on the principal eigenvalue is given in~\cite{BFSV17b} and reads in the case of \emph{infinite mass} boundary conditions as
\begin{equation}\label{eqn:lowboundbeng}
	\mu_1(\Omega) > \sqrt{\frac{2\pi}{|\Omega|}},
\end{equation}
where $\mu_1(\Omega)$ is the first non-negative eigenvalue of $\Op$. This bound is easy to compute and it
yields an estimate on the size
of the spectral gap. However, it
is not intrinsically Euclidean,
because the equality in~\eqref{eqn:lowboundbeng}
is not attained on any $\Omega\subset\R^2$.	
It is not yet known whether for $\Op$ a direct analogue of
the lower bound as in the Faber-Krahn inequality~\eqref{eq:FK} holds.

One should also mention numerous results in the differential geometry literature, where lower  and upper bounds have been found for Dirac operators on two-dimensional manifolds without boundary (see for instance~\cite{B92} and \cite{AF99, B98}). In~\cite{R06}, manifolds with boundaries are investigated and note that the mentioned \emph{CHI} (chiral) boundary conditions correspond to our \emph{infinite mass} boundary conditions. For two-dimensional manifolds, the author of~\cite{R06} provides a lower bound on the first eigenvalue which is actually~\eqref{eqn:lowboundbeng}. We
remark that
upon passing to the more general setting of manifolds the equality in~\eqref{eqn:lowboundbeng}
is attained on hemispheres.

Using the min-max principle and the estimate~\eqref{eq:PS}
one can easily show the following upper
bound
\begin{equation}\label{eq:easy_bound}
	\mu_1(\Omg) \le \sqrt{\lm_1(\Omg)}
	\le \left(	\frac{|\p\Omg|}{2\rho_{\rm i}|\Omg|}\lm_1(\D)\right)^{1/2};
\end{equation}
cf. Proposition~\ref{prop:var_form}.
This bound has a concise form, but it is not tight in particular cases.
Especially, for domains that are close to a disk  the bound~\eqref{eq:easy_bound} is not sharp, since $\mu_1(\D) \approx 1.4347$
and $\sqrt{\lm_1(\D)} \approx 1.5508$.

To our knowledge, there is no upper bound on 
$\mu_1(\Omg)$ expressed in terms of explicit geometric quantities, which is tight for domains being close to a disk. This is the question we tackle in this paper for the case of $C^3$-domains.
The inequalities that we obtain can be viewed as natural counterparts of~\eqref{eq:PS} in this new setting and our results roughly read as follows (see Theorems~\ref{thm:mainthrig} and~\ref{thm:mainthrig2} for rigorous statements).
\begin{res0}\label{thm:mainthvague}
	Let $\Omega\subset\R^2$ be a bounded, convex, $C^3$-domain with $0\in\Omega$ and let $\mu_1(\Omega)$ be the first non-negative eigenvalue of the massless Dirac operator $\Op$ with \emph{infinite mass} boundary conditions. Then, there is an explicitly given geometric functional
	$\cF_\rmc(\cdot)$ such that
	\begin{equation}\label{eq:main_ineq}
		\cF_\rmc(\Omega)\mu_1(\Omega) \leq \cF_\rmc(\D_r) \mu_1(\D_r),
	\end{equation}
	where $\D_{r}$ is the disk
	of radius $r > 0$ centered at the origin and $\cF_\rmc(\D_r) = r$ holds. Moreover, the inequality~\eqref{eq:main_ineq} is strict
	unless $\Omg = \D_{r'}$ for some $r' > 0$.
\end{res0}
\begin{dfn}\label{dfn:nc}
A bounded,
$C^3$-domain $\Omega\subset\R^2$, which is star-shaped with respect to the origin and which is parametrized in polar coordinates by $\rho = \rho(\phi)$, is called \emph{nearly circular} if
\begin{equation}\label{eq:rho_star}
	\rho_\star = \rho_\star(\Omg) :=\sup\bigg(\frac{|\rho'|}{\rho}\bigg)
	< 1.
\end{equation}
\end{dfn}
\begin{res1}\label{thm:mainthvague}
	Let $\Omega\subset\R^2$ be a bounded
	$C^3$-domain, which is nearly circular
	in the sense of Definition~\ref{dfn:nc}.
	Let $\mu_1(\Omega)$ be the first non-negative eigenvalue of the massless Dirac operator $\Op$ with \emph{infinite mass} boundary conditions. Then, there is an explicitly given geometric functional
	$\cF_\rms(\cdot)$ such that
	\begin{equation}\label{eq:main_ineq1}
	\cF_\rms(\Omega)\mu_1(\Omega) \leq \cF_\rms(\D_r)\mu_1(\D_r),
	\end{equation}
	where $\D_{r}$ is the disk
	of radius $r > 0$ centered at the origin and $\cF_\rms(\D_r) = r$ holds. Moreover, the inequality~\eqref{eq:main_ineq} is strict
	unless $\Omg = \D_{r'}$ for some $r' > 0$.
\end{res1}
The Dirac operator $\Op$ and the functionals
$\cF_\rmc$, $\cF_\rms$ appearing in~\eqref{eq:main_ineq},~\eqref{eq:main_ineq1} are rigorously defined further on, namely,
in Definition~\ref{def:Op} and
Equations~\eqref{eqn:functionalmainth},~\eqref{eqn:functionalmainth2}, respectively.
The main results are then precisely formulated
in Theorems~\ref{thm:mainthrig},~\ref{thm:mainthrig2}. 
Before going any further, let us comment on the assumptions and inequalities~\eqref{eq:main_ineq},
\eqref{eq:main_ineq1}.
\begin{rem} Even though for convex polygonal domains the Dirac operator $\Op$ can be defined in a similar fashion as for $C^3$-domains (see~\cite{LO18}), it will be clear from the proof that certain smoothness assumption on the domain $\Omega$ seems to be crucial for our results to hold. 
However, we expect that the smoothness hypothesis on $\Omega$ can be relaxed from $C^3$ to $C^2$-smoothness with additional efforts.
\end{rem}

\begin{rem} The strategy relying on a so-called \emph{invertible double} discussed in \cite[\S 2]{BFSV17b} (see also \cite[Chapter 9]{BW93}) might also yield new upper bounds using the known ones for two-dimensional manifolds without boundary. We do not discuss it here, first in order to keep a self-contained and elementary proof and, second, to obtain a result in terms of explicit geometric quantities:
the area $|\Omega|$, the maximal (non-signed) curvature $\kappa_\star$ of $\p\Omega$ and of the radii
$r_{\rmi} = \min_{\bx\in\p\Omg}|\bx|$, $r_{\rmo} = \max_{\bx\in\p\Omg}|\bx|$. Namely,
$\cF_\rmc$ is a function of all these parameters
and the parameter $\rho_\star$ introduced in~\eqref{eq:rho_star} plays a role in the definition of $\cF_\rms$.\end{rem}

Our main results imply two reverse Faber-Krahn-type
inequalities for the Dirac operator $\Op$. Indeed, let us denote by $\mathcal{E}_\rmc$ the set of bounded, convex $C^3$-domains $\Omega$ containing the origin and by $\mathcal{E}_\rms$ the set of bounded, nearly circular $C^3$-domains. Then, the following holds.
\begin{res2}
	Let $\sharp \in \{\rmc,\rms\}$ and $\Omega \in \mathcal{E}_{\sharp}$ such that $\cF_{\sharp}(\Omega) = r > 0$
	with $\Omg \ne\D_r$. Then the following
	inequality holds
	\begin{equation*}\label{eq:main_ineq2}
		\mu_1(\Omega) < \mu_1(\D_r).
	\end{equation*}
\end{res2}	
All the geometric bounds we obtain are consequences
of the following estimate which holds for any
bounded, simply connected, $C^3$-domain $\Omg\subset\R^2$ with $0\in\Omg$:
\begin{equation}\label{eq:abstract_bound_intro}
	\mu_1(\Omega) 
	\le 
	\left(
		\frac{2\pi}{|\Omega| + \pi r_\rmi^2}
	\right)^{1/2}
	\kp_\star \|f'\|_{\cH^2(\D)}\mu_1(\D),
\end{equation}
where $f\colon \D\arr\Omg$ is a conformal map
with $f(0) = 0$
and $\|f'\|_{\cH^2(\D)}$ is the norm of its derivative
in the Hardy space $\cH^2(\D)$.
The equality in~\eqref{eq:abstract_bound_intro}
occurs if, and only if, $\Omg = \D_{r'}$ for some $r' > 0$.
This abstract
bound is obtained in Theorem~\ref{thm:mainthrig0}.

%

%-----------------------------------------------------
\subsection{Strategy of the proof}
%-----------------------------------------------------

The proof is decomposed into four steps. First, thanks to the symmetry of the spectrum for the Dirac operator $\Op$ we compute the quadratic form of its square and characterize the squares of its eigenvalues \emph{via} the min-max principle.

Second, following the strategy of~\cite{S54}, we use a conformal map from the unit disk $\D$ onto the domain $\Omega$ in order to reformulate the min-max principle characterizing the first non-negative eigenvalue.

Third, we evaluate the corresponding Rayleigh quotient for a special test function that we construct by means of the first mode of the Dirac operator $\sfD_\D$ on the unit disk $\D$.

Finally, it remains to estimate each term
in this Rayleigh quotient in terms of suitable geometrical quantities. However, as the structure of the Dirac operator $\Op$ is more sophisticated than the one of the Neumann Laplacian investigated in~\cite{S54}, we have to control several additional terms. One of them involves  
the norm in the Hardy space $\cH^2(\D)$ of the derivative of the employed conformal map. We handle this term using 
available geometric estimates
for convex domains~\cite{K17} and	
for nearly circular domains~\cite{G62}.
In fact, other ways to control geometrically this 
Hardy norm are expected to yield new inequalities.

%-----------------------------------------------------
\subsection{Structure of the paper}
%-----------------------------------------------------
In Section~\ref{sec:diropdef} we rigorously define the Dirac operator $\Op$ and recall known results about it.
Section~\ref{sec:varchar} is devoted to the derivation of a variational characterization for the eigenvalues of  $\Op$. 
After precisely stating the main result in Theorem~\ref{thm:mainthrig}, we prove it in Section~\ref{sec:mainth}.

The paper is complemented by two appendices,  which are provided for completeness and convenience of the reader.
Appendix~\ref{app:disk} is about the eigenstructure of the disk and Appendix~\ref{app:prop}
deals with a geometric result
regarding the functional $\cF_\rmc$ on domains
with symmetries.

%-----------------------------------------------------
\section{The massless Dirac operator with infinite mass boundary conditions}\label{sec:diropdef}
%-----------------------------------------------------

This section is decomposed as follows. In \S \ref{ref:not} we introduce a few notation that will be used all along this paper and \S \ref{subsec:defDirope} contains the rigorous definition of the massless Dirac operator with \emph{infinite mass} boundary conditions as well as its basic properties that are of importance in the following.

%-----------------------------------------------------
\subsection{Setting of the problem and notations}\label{ref:not}
%-----------------------------------------------------
Let us introduce a few notation that will help us to set correctly the problem we are interested in.
\subsubsection{The geometric setting}\label{subsub:geom} 
Throughout this paper $\Omg\subset\R^2$ is a bounded, simply connected, $C^3$-smooth domain. The boundary of $\Omega$ is denoted by $\p\Omega$ and for $\bx\in\p\Omega$ the vector
\begin{equation*}\label{eq:normal}
	\nu(\bx) = (\nu_1(\bx),\nu_2(\bx))^\top\in\R^2
\end{equation*}
denotes the outer unit normal to $\Omg$ at the point $\bx\in\p\Omg$. We also introduce
the unit tangential vector $\tau(\bx) = (\nu_2(\bx),-\nu_1(\bx))^\top$ at $\bx\in\p\Omg$ chosen so that $\big(\tau(\bx),\nu(\bx)\big)$ is a positively-oriented orthonormal basis of $\R^2$.

We remark that the normal vector field 
$\p\Omg\ni \bx \mapsto \nu (\bx)$ induces a scalar,
complex-valued function on the boundary
\begin{equation*}\label{eq:bn}
	\bn\colon \p\Omg\arr\T,\qquad
	\bn(\bx) := 
	\nu_1(\bx) + \ii \nu_2(\bx),
\end{equation*}
where $\T := \{z\in\C\colon |z| = 1\}$.

Let $ L  > 0$ denote the length of $\p\Omg$ and consider the arc-length parametrization of $\p\Omega$ defined as $\gg \colon [0,L) \arr \R^2$ such that for all $s\in[0,L)$ we have $\gamma'(s) = \tau\big(\gamma(s)\big)$. In particular,
it means that the parametrization $\gg$ is clockwise.

Furthermore, we denote by 
\begin{equation*}\label{eq:curvature}
	\kp\colon\p\Omg\arr\R
\end{equation*}
the signed curvature of $\p\Omg$, which satisfies for all $s\in[0,L)$ the \emph{Frenet formula}
\begin{equation}\label{eq:Frenet}
	\gg''(s) = \kp\big(\gamma(s)\big) \nu\big(\gamma(s)\big). 
\end{equation}
As $\Omega$ is a $C^3$-domain, the signed curvature is a $C^1$-function on $\partial\Omega$ and we set
\begin{equation}\label{eqn:defkappastar}
	\kappa_\star := \sup_{\bx\in\partial\Omega} |\kappa(\bx)|>0,
\end{equation}
where the last inequality holds, because $\partial\Omega$ can not be a line segment.
We will also make use of the minimal radius of curvature defined by
\begin{equation}\label{eqn:defminradicurv}
	r_\rmc := \frac1{\kappa_\star}.
\end{equation}
Within our convention, the curvature of a convex domain is a non-positive function.
Finally, $\dd\Sigma$ denotes the $1$-dimensional Hausdorff measure of $\p\Omg$.
\subsubsection{Norms and function spaces}
The standard norm of a vector $\xi\in\C^n$ is defined
as $|\xi|_{\C^n}^2 := \sum_{k=1}^n|\xi_k|^2$.

The $L^2$-space and the $L^2$-based Sobolev space
of order $k \in\N$ of $\C^n$-valued functions ($n\in\N$) on the domain $\Omg$ are denoted by $L^2(\Omg,\C^n)$ and $H^k(\Omg,\C^n)$, respectively. 
The $L^2$-space and the $L^2$-based Sobolev space of order $s\in\R$
of $\C^n$-valued functions ($n\in\N$)
on the boundary $\p\Omg$ of $\Omg$ are denoted by $L^2(\p\Omg,\C^n)$ and $H^s(\p\Omg,\C^n)$, respectively. 
We use the shorthand notation $L^2(\Omg) := L^2(\Omg,\C^1)$,
$L^2(\p\Omg) := L^2(\p\Omg,\C^1)$, $H^k(\Omg) := H^k(\Omg,\C^1)$,
and $H^s(\p\Omg) := H^s(\p\Omg,\C^1)$.

We denote by $(\cdot,\cdot)_\Omg$ and by
$\|\cdot\|_\Omg$ the standard inner product
and the respective norm in $L^2(\Omg, \C^n)$.
The inner product $(\cdot,\cdot)_{\p\Omg}$ and the norm $\|\cdot\|_{\p\Omg}$ in $L^2(\p\Omg,\C^n)$
are introduced via the surface measure on $\p\Omg$.
A conventional norm in the Sobolev spaces $H^1(\Omg,\C^n)$ is defined by $\|u\|_{1,\Omg}^2 := \|\nabla u\|^2_{\Omg} + \| u\|^2_{\Omg}$.

\subsubsection{Self-adjoint operators \& the min-max principle}

Let $\sfT$ be a  self-adjoint operator 
in a Hilbert space $(\cH, (\cdot,\cdot)_\cH)$.
If $\sfT$ is, in addition, bounded from below
then let us denote by $\frt$ the associated quadratic form. 

We denote by $\speess{\sfT}$ and $\sped{\sfT}$
the essential and the discrete spectrum of $\sfT$, respectively.
By $\spe{\sfT}$, we denote the spectrum of $\sfT$ (i.e. 
$\spe{\sfT} = \speess{\sfT}\cup\sped{\sfT}$).

We say that the spectrum of $\sfT$ is \emph{discrete} if $\speess{\sfT} = \varnothing$. Let $\sfT$ be a semi-bounded operator with discrete spectrum.
For $k\in\mathbb{N}$, $\lm_k(\sfT)$ denotes the $k$-th eigenvalue of $\sfT$. These eigenvalues are ordered non-decreasingly with multiplicities taken into account. According to the min-max principle
the $k$-th eigenvalue of $\sfT$ is characterised by
\begin{equation*}\label{eq:min_max}
	\lm_k(\sfT) 
	= 
	\min_{\stackrel[{\rm dim}\,
	\cL = k]{}{\cL\subset\dom\frt}}	
	\max_{u\in\cL\setminus\{0\}}\frac{\frt[u,u]}{\|u\|_\cH^2}.
\end{equation*}
In particular, the lowest eigenvalue of $\sfT$
can be characterised as
\begin{equation}\label{eq:min_max1}
	\lm_1(\sfT) 
	= 
	\min_{u\in\dom{\frt}\setminus\{0\}}\frac{\frt[u,u]}{\|u\|_\cH^2}.
\end{equation}

\subsubsection{Pauli matrices}

Recall that the $2\tm 2$ Hermitian 
\emph{Pauli matrices} $\s_1,\s_2,\s_3$ are given by
\[
	\s_1 =  \begin{pmatrix}0&1\\1&0\end{pmatrix},
	\qquad
	\s_2  = \begin{pmatrix}0&-\ii\\ \ii&0\end{pmatrix}
	\quad \text{and} \quad 
	\s_3 = \begin{pmatrix}1&0\\0&-1\end{pmatrix}. 
\]
For $i,j\in\{1,2,3\}$, they satisfy the anti-commutation relation
\[
	\s_j\s_i + \s_i\s_j = 2 \dl_{ij},
\]
where $\dl_{ij}$ is the Kronecker symbol. 
For the sake of convenience, we define 
$\s := (\s_1,\s_2)$ and for 
$\bx = (x_1, x_2)^\top\in\R^2$ we set
\[
	\s\cdot \bx 
	:= 
	x_1 \s_1 + x_2\s_2 
	= 
	\begin{pmatrix} 
		0 & x_1 -\ii x_2\\
		x_1 + \ii x_2 & 0
	\end{pmatrix}.
\]
%

%-----------------------------------------------------
\subsection{The Dirac operator with infinite mass boundary conditions}\label{subsec:defDirope}
%-----------------------------------------------------
In this paragraph we introduce the massless Dirac operator with \emph{infinite mass} boundary conditions on $\p\Omg$, following the lines of~\cite{BFSV17a}.

%-----------------------------------------
\begin{dfn}\label{def:Op} 
	The massless Dirac operator with \emph{infinite mass} boundary conditions is the operator $\Op$ that acts in the Hilbert
	space $L^2(\Omg,\C^2)$ and is defined as
	\begin{equation}\label{eq:def_op}
	\begin{aligned}
		\Op u & := 
		-\ii(\s\cdot\nabla) u 
		= 
		-\ii\big(\s_1 \p_1 u + \s_2 \p_2 u\big) 
		= 
		\begin{pmatrix}
			0                &  -2\ii\p_z\\
			-2\ii\p_{\ov{z}} &  0
		\end{pmatrix} u, \\
		\dom \Op	& 
		:= 
		\big\{
		u = (u_1,u_2)^\top \in H^1(\Omg,\C^2) \colon 
					u_2|_{\p\Omg} = (\ii \bn) u_1|_{\p\Omg}  
		\big\},
		\\
	\end{aligned}		
	\end{equation}
	where
	$\p_z = \frac12\big(\p_1 - \ii \p_2\big)$
	and $\p_{\ov{z}} = \frac12\big(\p_1 + \ii\p_2\big)$ are the Cauchy-Riemann operators.
\end{dfn}
%-----------------------------------------
\begin{rem}\label{rem:relbengu} 
The operator $\Op$ defined in~\eqref{eq:def_op} coincides with the operator $D_\eta$ introduced 
in~\cite[\S 1.]{BFSV17a} where one chooses $\eta$ to be a constant function on the boundary $\eta := \eta(s) = \pi$. Note that we implicitly used the convention that $(\tau(\bx),\nu(\bx))$ is a positively-oriented orthonormal basis of $\R^2$ for all $\bx\in\p\Omega$.
\end{rem}

%----------------------------------------
The following proposition is essentially known, we recall its proof for the sake of completeness.

%----------------------------------------------------
\begin{prop} \label{prop:dirbasic}
	The linear operator $\Op$ defined in~\eqref{eq:def_op} satisfies the following properties.
	\begin{myenum}
		\item\label{itm:1} $\Op$ is self-adjoint.
		\item\label{itm:2} The spectrum of $\Op$ is discrete and symmetric with respect to zero.
		\item\label{itm:3} $0\notin\s(\Op)$.
	\end{myenum}
\end{prop}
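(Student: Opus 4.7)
My plan is to establish the three items essentially independently, collecting known ingredients.

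For (i), I would invoke directly the self-adjointness theorem of~\cite{BFSV17a}, which treats the full one-parameter family of local boundary conditions and, as recalled in Remark~\ref{rem:relbengu}, covers our $\Op$ via the constant choice $\eta\equiv\pi$; self-adjointness in the $C^3$-category is then immediate. A self-contained argument would rely on integration by parts (the boundary condition $u_2|_{\p\Omg}=(\ii\bn)u_1|_{\p\Omg}$ is precisely designed to kill the boundary term arising from $-\ii(\s\cdot\nabla)$), combined with $H^1$-regularity up to the boundary to identify $\dom{\Op^*}$ with $\dom\Op$.

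For the discreteness part of (ii), I would simply observe that $\dom\Op\subset H^1(\Omg,\C^2)$ and invoke the Rellich--Kondrachov compact embedding $H^1(\Omg,\C^2)\hookrightarrow L^2(\Omg,\C^2)$, valid because $\Omg$ is bounded and $C^3$-smooth. This gives compactness of the resolvent of $\Op$, hence a purely discrete spectrum.

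For the symmetry of $\spe{\Op}$ with respect to zero, I would introduce the antilinear involution $Cu:=\s_1\ov u$ on $L^2(\Omg,\C^2)$, where $\ov u$ denotes componentwise complex conjugation. A short computation using $\ov{\s_1}=\s_1$, $\ov{\s_2}=-\s_2$, and the Pauli anticommutation relations yields $C\Op u=-\Op Cu$ for every $u\in H^1(\Omg,\C^2)$. The point that requires care is that $C$ preserves the boundary condition: writing $v:=Cu$ so that $v_1=\ov{u_2}$ and $v_2=\ov{u_1}$, conjugating $u_2=(\ii\bn)u_1$ and multiplying by $\ii\bn$ (with $|\bn|=1$) gives $v_2=(\ii\bn)v_1$. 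Hence $C\colon\dom\Op\to\dom\Op$, and if $\Op u=\lm u$ with $\lm\in\R$, antilinearity of $C$ produces $\Op(Cu)=-\lm Cu$, so $-\lm\in\spe{\Op}$.

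For (iii), the cleanest route is to invoke the lower bound~\eqref{eqn:lowboundbeng} from~\cite{BFSV17b}, namely $\mu_1(\Omg)>\sqrt{2\pi/|\Omg|}>0$; combined with the symmetry just proven and the discreteness from (ii), this rules out zero from $\spe{\Op}$. The main technical obstacle in the proposition is really (i), which is why I would lean on the existing self-adjointness framework rather than reprove it from scratch; the remaining items follow from general principles once the domain and the charge-conjugation symmetry of $\Op$ are in hand.
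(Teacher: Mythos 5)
Your proposal is correct and follows essentially the same route as the paper: self-adjointness via \cite[Theorem 1.1]{BFSV17a} with $\eta\equiv\pi$, discreteness via the compact embedding $H^1(\Omg,\C^2)\hookrightarrow L^2(\Omg,\C^2)$, spectral symmetry via the antilinear charge conjugation $Cu=\s_1\ov{u}$ anticommuting with $\Op$ and preserving the boundary condition, and $0\notin\spe{\Op}$ by invoking \cite{BFSV17b}. Your explicit verification that $C$ preserves the infinite mass boundary condition is a welcome detail that the paper only asserts.
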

\begin{proof}
\noindent (i) The self-adjointness of $\Op$ is a consequence of \cite[Theorem 1.1]{BFSV17a} where one chooses $\eta = \pi$ (see Remark~\ref{rem:relbengu}).
\smallskip

\noindent (ii) The discreteness of the spectrum 
for $\Op$ follows from compactness of the embedding $H^1(\Omega,\C^2) \hookrightarrow L^2(\Omega,\C^2)$. Regarding the symmetry of the spectrum, one can consider the charge conjugation operator
\begin{equation}\label{eqn:defchargeconj}
	C := u\in \C^2 \mapsto \sigma_1\overline{u}
\end{equation}
and notice that $\dom {\Op}$ is left invariant by $C$. Hence, a basic computation yields
\[
	\Op C = - C \Op, 	
\]
which implies that if $u\in\dom{\Op}$ is an eigenfunction
of $\Op$ associated with an eigenvalue $\mu$ then $Cu \in \dom{\Op}$ is an eigenfunction of $\Op$ associated with the eigenvalue $-\mu$, which proves the symmetry of the spectrum.
In particular the spectrum of $\Op$ consists of eigenvalues of finite multiplicity accumulating at $\pm \infty$.\smallskip

\noindent (iii)
This statement  is a consequence of \cite[Theorem 1]{BFSV17b} where we picked $\eta = \pi$;
cf. Remark \ref{rem:relbengu}.
\end{proof}
Our main interest concerns the principal eigenvalue
of $\Op$ defined as
\begin{equation*}\label{eq:pr_ev}
	\mu_\Omg = \mu_1(\Omg) := \inf\big(\spe{\Op}\cap \R_+\big) > 0.
\end{equation*}
We emphasize that the value $\mu_\Omg$ completely describes
the size of the spectral gap of $\Op$ around zero and that~\eqref{eq:main_ineq} and~\eqref{eq:main_ineq1} provide upper bounds on its length for convex and nearly circular domains, respectively.

\begin{rem}
In \cite[\S 3]{BFSV17b}, keeping the notations of \cite[\S 1.]{BFSV17a}, the massless Dirac operator with infinite mass boundary conditions is defined as a block operator $D_0 \oplus D_\pi$ and acts on $L^2(\Omega,\C^4)  = L^2(\Omg,\C^2)\oplus L^2(\Omg,\C^2)$. One easily checks that $\sigma_3 \Op \sigma_3 = - D_0$. Hence, $D_\pi = \Op$ is unitarily equivalent to $-D_0$. Thanks to the symmetry of the spectrum stated in Proposition~\ref{prop:dirbasic}\,(ii), we know that $D_0 \oplus D_\pi$ has also symmetric spectrum and that if $\mu_1(D_0 \oplus D_\pi)$ denotes the first non-negative eigenvalue of $D_0 \oplus D_\pi$ we have $\mu_1(D_0 \oplus D_\pi) = \mu_1(\Omg)$.

In addition, the authors of~\cite[\S 3]{BFSV17b},  discuss the case of the so-called \emph{armchair} boundary conditions. This operator acts in $L^2(\Omega,\C^4)$ and up to a proper unitary transform, they show that it rewrites as
\[
	{\mathsf M}_\Omega := \begin{pmatrix}0 & -\Op\\ -\Op & 0\end{pmatrix}
\]
on the domain $\dom{\Op}\oplus\dom{\Op}$. One can check that $\spe{{\mathsf M}_\Omega^2} = \spe{\Op^2}$ and thus, our results also apply to \emph{armchair} boundary conditions.
\end{rem}

Let us conclude this paragraph by mentioning the following essentially known proposition in the special case of $\Omega = \D$. For the sake of completeness, its proof is provided in Appendix~\ref{app:disk}.
\begin{prop}\label{prop:disk} 
	The principal eigenvalue $\mu_\D := \mu_1(\D)$ of $\sfD_\D$ is the smallest non-negative solution of the following scalar equation
	\[
		J_0(\mu) = J_1(\mu),
	\]
	where $J_0$ and $J_1$ are the Bessel functions of the first kind of orders $0$ and $1$, respectively. Moreover, in polar coordinates $\bx = \big(r\cos(\theta),r\sin(\theta)\big)$, an eigenfunction associated with $\mu_\D$ is
	\[
		v\big(r,\theta\big) := 
		\begin{pmatrix}
			J_0(\mu_\D r)							 \\ 
			\ii e^{\ii \theta} J_1(\mu_\D r)
		\end{pmatrix},
	\]
	where $r \in [0,1)$ and $\theta \in [0,2\pi)$.
\end{prop}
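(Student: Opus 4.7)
The plan is to exploit the rotational symmetry of the disk to reduce the spectral problem for $\sfD_\D$ to a family of one-dimensional radial problems indexed by an angular momentum $m \in \Z$, and then to identify the mode producing the smallest positive eigenvalue.

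First, in polar coordinates one has $\p_z = \tfrac{1}{2} e^{-\ii\theta}(\p_r - \tfrac{\ii}{r}\p_\theta)$ together with the analogous expression for $\p_{\bar z}$, and on $\p\D$ the boundary function is $\bn = e^{\ii\theta}$. I would therefore look for eigenfunctions of the form
\[
	u(r,\theta) = \begin{pmatrix} f(r)\, e^{\ii m \theta} \\ \ii\, g(r)\, e^{\ii (m+1)\theta} \end{pmatrix}, \qquad m \in \Z,
\]
which is the natural ansatz compatible with the infinite-mass boundary condition $u_2|_{\p\D} = \ii \bn\, u_1|_{\p\D}$. Inserting this into $\sfD_\D u = \mu u$ reduces the eigenvalue problem to the radial first-order system
\begin{align*}
	g'(r) + \tfrac{m+1}{r}\, g(r) &= \mu\, f(r), \\
	f'(r) - \tfrac{m}{r}\, f(r) &= -\mu\, g(r),
\end{align*}
on $(0,1)$, supplemented by regularity at $r=0$ and by the scalar boundary condition $g(1) = f(1)$.

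Second, eliminating $g$ yields a Bessel equation of order $|m|$ for $f$ whose solution regular at the origin is $f(r) = J_{|m|}(\mu r)$; the standard recurrence $J_n'(x) = \tfrac{n}{x} J_n(x) - J_{n+1}(x)$ then determines $g$. Specialising to $m = 0$ one finds $f(r) = J_0(\mu r)$ and $g(r) = J_1(\mu r)$, which reproduces the function $v$ announced in the statement, and the boundary condition reduces exactly to $J_0(\mu) = J_1(\mu)$. In particular every positive root of this scalar equation is an eigenvalue of $\sfD_\D$, so if $\mu_0$ denotes its smallest positive root one obtains immediately $\mu_\D \leq \mu_0$.

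Finally, it remains to establish the reverse inequality $\mu_0 \leq \mu_\D$, i.e.\ that no angular sector with $|m| \geq 1$ produces a positive eigenvalue strictly smaller than $\mu_0$; this is the main obstacle. The characteristic equation in the $m$-th sector turns out to be $J_m(\mu) = J_{m+1}(\mu)$ for $m \geq 0$ and $J_{|m|-1}(\mu) + J_{|m|}(\mu) = 0$ for $m \leq -1$. Since $J_n(\mu) \sim (\mu/2)^n / n!$ as $\mu \to 0$, the two sides of these equations are strictly separated on the interval where $\mu_0 \approx 1.4347$ lives, so the first positive crossing occurs strictly beyond $\mu_0$. A more conceptual alternative is to pass to the scalar eigenvalue problem for $\sfD_\D^2$, where each sector is governed by a radial Bessel operator with centrifugal term $m^2/r^2$ on $(0,1)$, and to invoke monotonicity in $|m|$ of its principal eigenvalue: the minimum over $m \in \Z$ is then manifestly attained at $m = 0$, which delivers the identity $\mu_\D = \mu_0$ and concludes.
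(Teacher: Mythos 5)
Your reduction to angular sectors, the radial Bessel system, and the $m=0$ computation leading to $J_0(\mu)=J_1(\mu)$ all coincide with the paper's argument (the fiber decomposition of Proposition~\ref{prop:fibdecdisk} and the final part of Appendix~\ref{app:disk}); the completeness of this decomposition, which you use implicitly when you claim every eigenvalue comes from your ansatz, is justified there via $\spe{\OpD}=\bigcup_k\spe{\bd_k}$. The genuine gap is in the step excluding the sectors $|m|\ge 1$, which is where the real work lies. For $m\le -1$ your equation $J_{|m|-1}(\mu)+J_{|m|}(\mu)=0$ indeed has no root in $(0,\mu_0]$, because both Bessel functions are positive up to the first zero $j_{0,1}\approx 2.405>\mu_0$. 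But for $m\ge 1$ the assertion that $J_m(\mu)>J_{m+1}(\mu)$ on all of $(0,\mu_0]$ does not follow from the asymptotics $J_n(\mu)\sim(\mu/2)^n/n!$ as $\mu\to 0$ together with the numerical value $\mu_0\approx 1.4347$: as written this is an unproved statement about the functions on a whole interval, and it needs quantitative input, e.g.\ an a priori bound $\mu_0<3/2$ (obtained by evaluating $J_0-J_1$ at $3/2$) combined with a series or ratio estimate such as $J_{m+1}(x)/J_m(x)<x/\big(2(m+1)\big)$ for $0<x<j_{m,1}$, uniformly in $m\ge1$.

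Your ``more conceptual alternative'' does not repair this as stated, and it is exactly the point where the paper does its work. Passing to $\sfD_\D^2$, the $m$-th fiber is not a single radial operator with centrifugal term $m^2/r^2$: the two spinor components carry the different weights $m^2/r^2$ and $(m+1)^2/r^2$ (so the sectors $m$ and $-(m+1)$ are exchanged by charge conjugation, and the minimum is attained at $m=0$ and $m=-1$ simultaneously, not ``manifestly at $m=0$''), and, more importantly, expanding $\|\bd_m u\|^2$ produces after integration by parts the sign-indefinite boundary contributions $-m|u_+(1)|^2+m|u_-(1)|^2$, so monotonicity in $|m|$ is not automatic. The paper's Lemma~\ref{lem:ineqfq} is precisely the statement that the infinite mass boundary condition $u_-(1)=\ii u_+(1)$ forces $|u_-(1)|=|u_+(1)|$, so these boundary terms cancel and $\|\bd_m u\|^2\ge\|\bd_0 u\|^2$ follows; the min-max principle then confines the principal eigenvalue to the $m=0$ (equivalently $m=-1$) sector without ever analysing the characteristic equations $J_m=J_{m+1}$. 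To complete your proof you must either supply the uniform Bessel-function estimates for all $m\ge1$ or adopt such a form-comparison argument; without one of these the identification $\mu_\D=\mu_0$ is not established.
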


\begin{rem} An approximate numerical value of $\mu_\D$ is $\mu_\D \approx 1.434696$.
\end{rem}

%----------------------------------------------------
%%%%%%%%%%%%%%%%%%%
\section{A variational characterization of $\mu_1(\Omg)$}\label{sec:varchar}
%%%%%%%%%%%%%%%%%%%
In this section we obtain a characterization for $\mu_\Omg = \mu_1(\Omg)$. Let us briefly outline the strategy that we follow. First,
we compute the quadratic form for the square
of the operator $\Op$. The self-adjoint 
operator $\Op^2$ is positive and its lowest eigenvalue is equal to $\mu_\Omg^2$. Therefore, it can be characterised via the min-max principle,
which gives a variational characterization of $\mu_\Omg$.
\begin{prop}\label{prop:var_form}
	The square of the principal eigenvalue
	$\mu_\Omega$ of $\Op$ can be characterised
	as
	\[
		\mu_\Omega^2 
		= 
		\inf_{u\in\dom \Op\setminus\{0\}}
		\frac{\displaystyle
		\int_\Omg|\nabla u|_{\R^2\otimes\C^2}^2\dd x 
		-
		\frac12 \int_{\p\Omg}\big(\kp |u|_{\C^2}^2\big)\dd\Sigma}
		{\displaystyle\int_\Omg |u|_{\C^2}^2\dd x}.
	\]
	In particular, $\mu_\Omg^2 \le \lm_\Omg$,
		where $\lm_\Omg$ is the lowest eigenvalue of
	 the Dirichlet Laplacian on $\Omg$.
\end{prop}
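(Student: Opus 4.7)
The plan is to identify the quadratic form associated with $\Op^2$ and then invoke the min-max principle~\eqref{eq:min_max1}. Since $\Op$ is self-adjoint with spectrum symmetric with respect to $0$ by Proposition~\ref{prop:dirbasic}, the operator $\Op^2$ is non-negative self-adjoint with $\lm_1(\Op^2) = \mu_\Omg^2$, and its form domain coincides with $\dom \Op$ with quadratic form $u\mapsto \|\Op u\|_\Omg^2$. Hence the whole task boils down to establishing, for every $u\in \dom \Op$, the Bochner-Weitzenb\"ock-type identity
\begin{equation}\label{eq:bw}
	\|\Op u\|_\Omg^2
	=
	\int_\Omg|\nabla u|_{\R^2\otimes\C^2}^2\,\dd x
	-
	\frac12 \int_{\p\Omg}\kp\,|u|_{\C^2}^2\,\dd\Sigma.
\end{equation}

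The main computation will be carried out first for $u\in \dom \Op \cap H^2(\Omg,\C^2)$ and then extended by density; note that such functions form a core for the form of $\Op^2$ since $\Op$ maps $\dom \Op\cap H^2$ into itself under elliptic regularity. I would expand $\|\Op u\|_\Omg^2 = \|\sigma\cdot\nabla u\|_\Omg^2$ using the Pauli anti-commutation relations $\sigma_j\sigma_k + \sigma_k\sigma_j = 2\delta_{jk}$, which yields the pointwise identity $|\sigma\cdot\nabla u|^2 = |\nabla u|^2 + i(\p_1 u^*\sigma_3 \p_2 u - \p_2 u^*\sigma_3 \p_1 u)$. Recognizing the cross-term as a two-dimensional curl and applying the divergence theorem produces a boundary integral of the form $-i\int_{\p\Omg} u^*\,\p_\tau(\sigma_3 u)\,\dd\Sigma$, so that
\[
	\|\Op u\|_\Omg^2
	=
	\int_\Omg|\nabla u|^2\,\dd x
	-
	i\int_{\p\Omg} u^*\,\p_\tau(\sigma_3 u)\,\dd\Sigma.
\]

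The central and most delicate step is to reduce this boundary term to $-\tfrac12\int_{\p\Omg}\kp |u|^2 \dd\Sigma$ using the infinite mass condition $u_2|_{\p\Omg} = i\bn\, u_1|_{\p\Omg}$. Differentiating this boundary identity in the tangential direction and invoking the consequence $\p_\tau \bn = i\kp\,\bn$ of the Frenet formula~\eqref{eq:Frenet}, I would write $\p_\tau u_2 = -\kp\,\bn\,u_1 + i\bn\,\p_\tau u_1$ on $\p\Omg$. Substituting this together with $|\bn|=1$ into $u^*\p_\tau(\sigma_3 u) = \bar u_1 \p_\tau u_1 - \bar u_2 \p_\tau u_2$, the derivative terms cancel and one obtains the clean expression $u^*\p_\tau(\sigma_3 u) = -i\kp\,|u_1|^2$. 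Since the boundary condition forces $|u|_{\C^2}^2 = 2|u_1|^2$ on $\p\Omg$, inserting this into the previous formula yields~\eqref{eq:bw}. The min-max principle~\eqref{eq:min_max1} applied to $\Op^2$ then gives the announced variational characterization.

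To conclude, the Dirichlet comparison $\mu_\Omg^2 \le \lm_\Omg$ follows by choosing as a test function $u = (\phi,0)^\top$, where $\phi\in H^1_0(\Omg)$ is a first Dirichlet eigenfunction of $-\Delta$ with eigenvalue $\lm_\Omg$. Since $\phi$ vanishes on $\p\Omg$, both sides of the infinite mass condition are zero and $u\in\dom \Op$; the boundary integral in~\eqref{eq:bw} drops out and the Rayleigh quotient reduces exactly to $\|\nabla\phi\|_\Omg^2/\|\phi\|_\Omg^2 = \lm_\Omg$. The main obstacle is thus the boundary-term manipulation in the second step, where the geometry of $\p\Omg$ (through $\kp$) must emerge precisely from combining the tangential derivative of the boundary condition with the Frenet identity for $\p_\tau \bn$.
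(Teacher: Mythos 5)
Your proposal follows essentially the same route as the paper: expand $\|\Op u\|_\Omg^2$ for $u\in\dom\Op\cap H^2(\Omg,\C^2)$ via the Pauli algebra and an integration by parts to obtain the tangential boundary term (the paper's Lemma~\ref{lem:lem2}), then differentiate the infinite mass condition along $\p\Omg$ and use the Frenet identity $\p_\tau\bn=\ii\kp\bn$ together with $|u_2|=|u_1|$ on $\p\Omg$ to reduce it to $-\tfrac12\int_{\p\Omg}\kp|u|_{\C^2}^2\,\dd\Sigma$ (Proposition~\ref{prop:comput_square}), extend by density, and apply the min-max principle~\eqref{eq:min_max1} to $\Op^2$, with the Dirichlet comparison obtained from the test function $(\phi,0)^\top$, which is exactly the paper's use of $H^1_0(\Omg,\C^2)\subset\dom\Op$. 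The one shaky point is your justification of the density step: the claim that ``$\Op$ maps $\dom\Op\cap H^2$ into itself under elliptic regularity'' is neither correct (the image need not satisfy the boundary condition) nor the relevant reason; what is needed, and what the paper proves in Lemma~\ref{lem:density} via a trace-extension operator, is that $\dom\Op\cap H^2(\Omg,\C^2)$ is dense in $\dom\Op$ for the $\|\cdot\|_{1,\Omg}$-norm.
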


\begin{rem} 
	With the conventions chosen in \S \ref{subsub:geom}, if $\Omega$ is a convex domain we have $\kappa \leq 0$ and the boundary term in the variational characterization is non-negative. 
	%Thus, it can not be dropped in our quest for an upper bound.
\end{rem}
In order to prove Proposition~\ref{prop:var_form}
we state and prove a few auxiliary lemmata.
The first lemma involves the notion of tangential derivatives. Remark that by the trace theorem~\cite[Theorem 3.37]{McL} 
there exists a constant $C = C(\Omg) > 0$ such that
\[
	\|v|_{\p\Omg}\|_{H^{3/2}(\p\Omg)} \le C\|v\|_{H^2(\Omg)}
\]
for all $v\in H^2(\Omega)$. Thus, the tangential
derivative given by
\[
	\p_\tau\colon H^2(\Omg)\arr H^{1/2}(\p\Omg),
	\qquad
	\p_\tau v := \frac{\dd}{\dd s}(v\circ \gamma),
\]
is a well-defined, continuous linear operator.
Hence, we define the tangential derivative of $u = (u_1,u_2)^\top\in H^2(\Omega,\C^2)$ by
\[
	\partial_\tau u := \big(\p_\tau u_1,\p_\tau u_2\big)^\top \in H^{1/2}(\p\Omega,\C^2).
\]
The tangential derivative is related to the square of the Dirac operator \emph{via} the next lemma,
which is reminiscent of \cite[Eq. (13)]{HMZ01}. However, we provide here a simple proof for convenience of the reader.
\begin{lem}\label{lem:lem2}
	For any $u\in H^2(\Omega,\C^2)$, one has
	\[
		\|-\ii(\sigma\cdot\nabla) u\|^2_{\Omg} 
		= 
		\|\nabla u\|^2_{\Omg} 
		- 
		\big(\ii\s_3 \p_\tau u,u\big)_{\p\Omega}.
	\]
\end{lem}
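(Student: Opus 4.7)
The plan is to expand the squared $L^2$-norm of the Dirac-type expression $-\ii(\s\cdot\nabla)u$ using the anti-commutation relations $\s_i\s_j+\s_j\s_i=2\dl_{ij}$, then push the resulting cross term to the boundary by means of the divergence theorem. Since $u \in H^2(\Omg,\C^2)$, all integrations by parts below are classically justified.

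First I would write
\[
  \|-\ii(\s\cdot\nabla)u\|^2_\Omg
  = \int_\Omg \langle \s_1\p_1 u + \s_2\p_2 u, \s_1\p_1 u + \s_2\p_2 u\rangle_{\C^2}\,\dd x,
\]
and expand. The diagonal terms, using $\s_j^2 = I$, yield $\|\p_1 u\|_\Omg^2 + \|\p_2 u\|_\Omg^2 = \|\nabla u\|_\Omg^2$. The remaining off-diagonal contributions reduce, after using $\s_j^* = \s_j$ and the identities $\s_1\s_2 = \ii\s_3$, $\s_2\s_1 = -\ii\s_3$, to
\[
  -\ii \int_\Omg \langle \s_3 \p_1 u, \p_2 u\rangle_{\C^2}\,\dd x
  + \ii \int_\Omg \langle \s_3 \p_2 u, \p_1 u\rangle_{\C^2}\,\dd x.
\]

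Second, I would rewrite this skew-symmetric combination as a divergence. The elementary identity
\[
  \p_2 \langle \s_3\p_1 u, u\rangle - \p_1 \langle \s_3\p_2 u, u\rangle
  = \langle \s_3 \p_1 u, \p_2 u\rangle - \langle \s_3 \p_2 u, \p_1 u\rangle,
\]
valid in the distributional sense for $u\in H^2$ because the mixed second derivatives cancel, expresses the integrand as the divergence of the vector field $(-\langle \s_3\p_2 u, u\rangle, \langle \s_3\p_1 u, u\rangle)$. Applying the divergence theorem and using $\tau = (\nu_2,-\nu_1)^\top$, so that $\nu_2\p_1 - \nu_1\p_2 = \p_\tau$ on $\p\Omg$, the off-diagonal piece becomes $-\ii \int_{\p\Omg} \langle \s_3 \p_\tau u, u\rangle_{\C^2}\,\dd\Sigma = -(\ii\s_3\p_\tau u, u)_{\p\Omg}$.

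Combining these two steps produces the claimed identity. The only subtlety is the integration by parts for a $H^2$-function, but this is standard and taken care of by the well-definedness of the tangential derivative noted just before the lemma's statement. There is no serious obstacle; the main thing to keep track of is the sign bookkeeping coming from the orientation convention $\tau = (\nu_2,-\nu_1)^\top$ and from the factor of $\ii$ in $\s_1\s_2 = \ii\s_3$.
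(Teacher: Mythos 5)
Your proof is correct and follows essentially the same route as the paper: expand the square, identify the diagonal part with $\|\nabla u\|^2_{\Omg}$, and convert the antisymmetric cross term into the tangential boundary term via integration by parts, using $\nu_2\p_1-\nu_1\p_2=\p_\tau$ on $\p\Omg$. The only difference is presentational: you keep the Pauli algebra ($\s_1\s_2=\ii\s_3$) at the $\C^2$-valued level and invoke the divergence theorem once, whereas the paper works componentwise with the Cauchy--Riemann form and the scalar identity $\Im\big(\int_\Omg \p_1 v\,\ov{\p_2 v}\,\dd x\big)=\frac{1}{2\ii}\int_{\p\Omg}\ov{v}\,\p_\tau v\,\dd\Sigma$; the substance is identical.
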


\begin{proof}
	Using an integration by parts (see~\cite[Theorem~1.5.3.1]{G85}) we get 
	for any function $v \in H^2(\Omg)$, 
	\[
	\begin{aligned}
		\int_\Omg \p_1 v \ov{\p_2 v} \dd x
		& =
		-
		\int_\Omg \ov{v}\p_{12} v \dd x
		+ \int_{\p\Omg}\big(\ov{v} \p_1 v\big) \nu_2 \dd \Sigma,\\
		\int_\Omg \ov{\p_1 v} \p_2 v \dd x
		& =
		-
		\int_\Omg \ov{v} \p_{12} v \dd x
		+ \int_{\p\Omg}\big(\ov{v}\p_2v\big)\nu_1 \dd \Sigma.
	\end{aligned}
	\]
	Dividing the difference of the above two equations
	by $2\ii$ we obtain
	\begin{equation}\label{eq:Im_parts}
	\begin{aligned}
		\Im\left(\int_\Omg \p_1 v \ov{\p_2 v} \dd x\right) & = \frac{1}{2\ii}
		\int_{\p\Omg}\ov{v}\big((\p_1 v) \nu_2
		-(\p_2 v)\nu_1\big)\dd \Sigma \\
		& =
		\frac{1}{2\ii}
		\int_{\p\Omg}\ov{v}\big(\tau\cdot\nabla v\big)
		\dd \Sigma =
		\frac{1}{2\ii}
		\int_{\p\Omg}\ov{v} \p_\tau v \dd \Sigma.
	\end{aligned}
	\end{equation}
	Let $u\in H^2(\Omg,\C^2)$.
	Using the explicit expression of $\ii(\sigma\cdot\nabla)$ and performing elementary Hilbert-space computations we get
	\begin{align*}
		\|\ii(\sigma\cdot\nabla) u\|^2_{\Omg}
		& = 
		\|\p_1 u_2 -\ii\p_2 u_2 \|^2_{\Omega}
		+
		\|\p_1 u_1 +\ii\p_2 u_1 \|^2_{\Omega}\\[0.4ex]
		& =
		\|\nabla u_1\|^2_{\Omega} + \|\nabla u_2\|^2_{\Omega}
		 + 2\Re\big[
			(\p_1 u_1,\ii\p_2 u_1)_{\p\Omg}
			-		
			(\p_1 u_2,\ii\p_2 u_2)_{\p\Omg}
		\big]	\\[0.4ex]
		&
		= \|\nabla u\|^2_{\Omg} 
		+
		2\Im\big[
		(\p_1 u_1,\p_2 u_1)_{\p\Omg}
		-
		(\p_1 u_2,\p_2 u_2)_{\p\Omg}
		\big].
	\end{align*}
	Employing identity~\eqref{eq:Im_parts} we obtain
	\[
		\|\ii(\sigma\cdot\nabla) u\|_{\Omg}^2
		=
		\|\nabla u\|^2_{\Omg}
		- 
		\big( 
		\ii \sigma_3 \p_\tau u,u
		\big)_{\p\Omg},
	\]
	which proves the claim.
\end{proof}
To obtain a convenient expression for the quadratic form of the operator $\Op^2$, we will make use of the following density lemma. 
\begin{lem}\label{lem:density} 
	$\dom{\Op}\cap H^2(\Omg,\C^2)$ is dense in $\dom \Op$ with respect to the norm $\|\cdot\|_{1,\Omg}$.
\end{lem}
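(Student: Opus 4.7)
The strategy is to exploit the self-adjointness of $\Op$ (Proposition~\ref{prop:dirbasic}) together with elliptic regularity for the squared Dirac operator. We first observe that on $\dom{\Op}$ the graph norm $u\mapsto (\|u\|_\Omg^2+\|\Op u\|_\Omg^2)^{1/2}$ and the $H^1$-norm $\|\cdot\|_{1,\Omg}$ are equivalent: the upper bound $\|\Op u\|_\Omg\le\sqrt{2}\,\|\nabla u\|_\Omg$ is immediate from~\eqref{eq:def_op}, while the reverse inequality follows from Lemma~\ref{lem:lem2} (applied componentwise after a smooth approximation) combined with a standard trace inequality used to control the boundary contribution by $C\|u\|_{1,\Omg}\|u\|_\Omg$ (this is essentially the estimate already built into the proof that $\dom{\Op}\subset H^1(\Omg,\C^2)$ in~\cite{BFSV17a}). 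Consequently, it suffices to prove density with respect to the graph norm of $\Op$.

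Next, since $\Op$ is self-adjoint, a standard spectral-theoretic argument shows that $\dom{\Op^2}$ is a core for $\Op$ in the graph norm. Indeed, given $u\in\dom{\Op}$, the approximants $u_n := n^2(n^2+\Op^2)^{-1}u$ belong to $\dom{\Op^2}$ and satisfy $u_n\to u$ in $L^2(\Omg,\C^2)$, as well as $\Op u_n = n^2(n^2+\Op^2)^{-1}\Op u\to \Op u$ in $L^2(\Omg,\C^2)$, by spectral calculus applied to the uniformly bounded family $t\mapsto n^2/(n^2+t^2)$. Thus the proof reduces to establishing the inclusion $\dom{\Op^2}\subset H^2(\Omg,\C^2)$.

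This last inclusion is a matter of elliptic regularity. If $u\in\dom{\Op^2}$, then $u\in H^1(\Omg,\C^2)$ satisfies the trace identity $u_2|_{\p\Omg}=\ii\bn u_1|_{\p\Omg}$, the vector $\Op u\in H^1(\Omg,\C^2)$ satisfies the analogous trace identity $-2\ii\p_{\ov z}u_1|_{\p\Omg}=\ii\bn(-2\ii\p_z u_2)|_{\p\Omg}$, and $\Op^2 u=-\Delta u\in L^2(\Omg,\C^2)$. These two first-order boundary conditions for the Laplace system satisfy the Lopatinski-Shapiro ellipticity condition, a property well known for infinite mass (MIT-bag) boundary conditions, so the classical elliptic regularity theory on $C^3$-domains yields $u\in H^2(\Omg,\C^2)$. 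The main obstacle is precisely this last step; in practice it is handled by writing the boundary condition locally in a tubular neighborhood of $\p\Omg$, where the infinite mass condition decouples into a complex Robin-type condition for each component of $u$, after which $H^2$-regularity follows from the standard theory of Robin Laplacians on smooth domains.
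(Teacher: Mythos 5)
Your overall architecture (reduce $\|\cdot\|_{1,\Omg}$-density to graph-norm density, use that $\dom{\Op^2}$ is a core of the self-adjoint operator $\Op$, then show $\dom{\Op^2}\subset H^2(\Omg,\C^2)$) is coherent, and the first two steps can be made rigorous: the core property is standard spectral calculus, and the equivalence of the graph norm and the $H^1$-norm on $\dom{\Op}$ follows most cleanly not from Lemma~\ref{lem:lem2} — whose extension from $H^2(\Omg,\C^2)$ to all of $\dom\Op$ is essentially the density statement you are trying to prove, so your "smooth approximation" argument risks circularity — but from the bounded inverse theorem: $\dom\Op$ is closed in $H^1(\Omg,\C^2)$ (the boundary-condition map is continuous into $H^{1/2}(\p\Omg)$), $\Op$ is closed by self-adjointness (Proposition~\ref{prop:dirbasic}, i.e.\ \cite{BFSV17a}), and the identity map from the $H^1$-topology to the graph topology is bounded.

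The genuine gap is the third step, which carries all the analytic weight: the inclusion $\dom{\Op^2}\subset H^2(\Omg,\C^2)$ is only asserted, by appeal to a "well known" Lopatinski--Shapiro property, and the concrete reduction you offer is not correct as stated. The two boundary conditions ($u_2|_{\p\Omg}=\ii\bn\,u_1|_{\p\Omg}$ together with the condition on the trace of $\Op u$) do not decouple into scalar Robin conditions for $u_1$ and $u_2$; after eliminating $u_2$ via the first condition one obtains an oblique-type condition involving the tangential derivative and the curvature, so "the standard theory of Robin Laplacians" does not apply off the shelf, and the ellipticity of the resulting first-order system for the vector Laplacian would have to be verified, not cited. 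This $H^2$-regularity statement for the square of the Dirac operator with infinite mass boundary conditions is at least as deep as the lemma itself, so as written the proof has a hole exactly where the difficulty sits. Note that the paper avoids elliptic regularity altogether by a much more elementary device: approximate $u\in\dom\Op$ by arbitrary $u_\eps\in H^2(\Omg,\C^2)$ in the $H^1$-norm and restore the boundary condition by subtracting $E\big(\tfrac12(1_2+\ii\s_3\s\cdot\nu)u_\eps|_{\p\Omg}\big)$, where $E$ is a trace-extension operator mapping $H^{3/2}(\p\Omg,\C^2)$ into $H^2(\Omg,\C^2)$; since $\tfrac12(1_2+\ii\s_3\s\cdot\nu)u|_{\p\Omg}=0$ for $u\in\dom\Op$, the correction tends to zero in $H^1(\Omg,\C^2)$.
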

\begin{proof}
	Thanks to~\cite[Theorems 1.5.1.2, 2.4.2.5,
	and Lemma 2.4.2.1]{G85} we know that there exists a bounded linear operator 
	$E \colon H^{1/2}(\p\Omega,\C^2)\rightarrow H^1(\Omega,\C^2)$ such that for any 
	$v\in H^{1/2}(\p\Omega,\C^2)$ 
	one has 
	$(Ev)|_{\p\Omega} = v$ and 
	$E\big(H^{3/2}(\Omega,\C^2)\big)\subset H^2(\Omega,\C^2)$.
	
	Let $u \in \dom{\Op}$. Since $H^{2}(\Omega,\C^2)$ is dense in $H^1(\Omega,\C^2)$ with respect to the norm $\|\cdot\|_{1,\Omg}$, there exists a one-parametric
	family of functions
	$(u_\varepsilon)_\eps \in H^2(\Omega,\C^2)$ satisfying 
	$\lim_{\varepsilon\rightarrow 0}
	\|u_\varepsilon - u\|_{1,\Omg} = 0$.
	In particular, one has
	\[
	\lim_{\varepsilon\rightarrow 0}
	\|u_\varepsilon|_{\p\Omg} - u|_{\p\Omg}\|_{H^{1/2}(\p\Omg, \C^2)} = 0.
	\]
	Now, consider the functions
	\[
	v_\varepsilon 
	:= 
	u_\varepsilon - 
	E\left(
	\frac12(1_2 + \ii\sigma_3\sigma\cdot \nu)u_\eps|_{\p\Omg}
	\right).
	\]
	Note that as defined $v_\varepsilon \in \dom{\Op}\cap H^2(\Omg,\C^2)$. Hence, we have
	\begin{multline*}
		\|u - v_\varepsilon\|_{1,\Omg} 
		\leq 
		\|u - u_\varepsilon\|_{1,\Omg} 
		+ 
		\left\|	E\left
		(\frac12(1_2+\ii\sigma_3\sigma\cdot \nu)u_\eps|_{\p\Omega}\right)\right\|_{1,\Omg}\\ 
		= \|u - u_\varepsilon\|_{1,\Omg} 
		+ 
		\left\|E\left(\frac12(1_2 + \ii\s_3\sigma\cdot \nu) (u_\varepsilon|_{\p\Omega} - u|_{\p\Omega})
		\right)\right\|_{1,\Omg},
	\end{multline*}
	where we have used that $\frac12(1_2 + \ii\sigma_3\sigma\cdot \nu) u = 0$ on $\p\Omg$ as $u\in \dom{\Op}$. Finally, using the continuity of $E \colon H^{1/2}(\p\Omega,\C^2) \to H^1(\Omega,\C^2)$ and the fact that the multiplication operator by the matrix-valued function $\p\Omg\ni x\mapsto\frac12(1_2 + \ii\sigma_3\sigma\cdot \nu)$ is bounded in $H^{1/2}(\partial\Omega,\C^2)$ we obtain that $\lim_{\varepsilon\to0} \|u - v_\varepsilon\|_{1,\Omg} = 0$ and as by definition $v_\varepsilon \in \dom{\Op}\cap H^2(\Omg,\C^2)$, we obtain the lemma.
\end{proof}
Finally, we simplify the expression
of $\|-\ii(\sigma\cdot\nabla) u\|^2_{\Omg}$ obtained in Lemma~\ref{lem:lem2} 
for the special case of functions satisfying  \emph{infinite
mass} boundary conditions.
%-----------------------------------------------
\begin{prop}\label{prop:comput_square} 
	The identity 
	\[
		\|\Op u\|^2_{\Omg} 
		= 
		\|\nabla u\|_{\Omg}^2 - \frac12(\kp u, u)_{\p\Omg}
	\]
	holds for all $u\in\dom\Op$.
\end{prop}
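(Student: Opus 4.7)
The plan is to reduce the claim to the formula already established in Lemma~\ref{lem:lem2}, namely that for $u \in H^2(\Omg,\C^2)$,
\[
	\|-\ii(\sigma\cdot\nabla) u\|^2_{\Omg} = \|\nabla u\|_\Omg^2 - (\ii\s_3 \p_\tau u,u)_{\p\Omg},
\]
and then to show that on functions satisfying the \emph{infinite mass} boundary condition the boundary term simplifies to $\tfrac12(\kp u,u)_{\p\Omg}$. To justify passing from $H^2$ regularity to the full domain of $\Op$, I would invoke Lemma~\ref{lem:density}: both sides of the claimed identity are continuous with respect to the $\|\cdot\|_{1,\Omg}$ norm (the left-hand side by the explicit first-order form of $\Op$, and the boundary term on the right-hand side by the continuity of the trace $H^1(\Omg,\C^2) \to L^2(\p\Omg,\C^2)$ combined with the boundedness of $\kp$ on $\p\Omg$), so it suffices to prove the identity for $u \in \dom \Op \cap H^2(\Omg,\C^2)$.

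For such $u = (u_1,u_2)^\top$ the boundary condition reads $u_2 = \ii \bn u_1$ on $\p\Omg$. Writing out $(\ii\s_3 \p_\tau u,u)_{\p\Omg}$ using $\s_3 = \mathrm{diag}(1,-1)$ gives
\[
	(\ii\s_3 \p_\tau u,u)_{\p\Omg} = \int_{\p\Omg} \bigl(\ii(\p_\tau u_1)\ov{u_1} - \ii(\p_\tau u_2)\ov{u_2}\bigr)\dd\Sigma.
\]
Substituting $u_2 = \ii\bn u_1$ and $\ov{u_2} = -\ii\ov{\bn}\,\ov{u_1}$, and using $|\bn|^2 = 1$, the term $\ii(\p_\tau u_2)\ov{u_2}$ expands as $\ii(\p_\tau u_1)\ov{u_1} + \ii(\p_\tau \bn)\ov{\bn}|u_1|^2$; the Leibniz term cancels the first contribution, leaving
\[
	(\ii\s_3\p_\tau u,u)_{\p\Omg} = -\int_{\p\Omg} \ii (\p_\tau \bn)\ov{\bn}\,|u_1|^2 \dd\Sigma.
\]

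The main step is then the geometric identity $\p_\tau \bn = \ii \kp \bn$. Since $\tau = (\nu_2,-\nu_1)^\top$ and the Frenet formula~\eqref{eq:Frenet} reads $\tau'(s) = \kp\nu$, componentwise $\nu_1' = -\kp\nu_2$ and $\nu_2' = \kp\nu_1$, whence $\p_\tau\bn = \nu_1' + \ii \nu_2' = \ii\kp(\nu_1+\ii\nu_2) = \ii\kp\bn$. Consequently $-\ii(\p_\tau\bn)\ov\bn = \kp$, and the boundary integrand becomes $\kp|u_1|^2$. Since the boundary condition forces $|u_2|^2 = |u_1|^2$ on $\p\Omg$, one has $|u_1|^2 = \tfrac12 |u|_{\C^2}^2$ pointwise on $\p\Omg$, which yields $(\ii\s_3\p_\tau u,u)_{\p\Omg} = \tfrac12(\kp u, u)_{\p\Omg}$. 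Combining with Lemma~\ref{lem:lem2} gives the desired identity for smooth $u$, and the density argument finishes the proof.

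I expect the only delicate point to be the algebraic manipulation of the boundary term: keeping track of the signs and the conjugations when substituting $u_2 = \ii\bn u_1$, and recognising that the Leibniz-rule contribution coming from $\p_\tau u_1$ cancels the $u_1$-derivative term. The other potential subtlety is making sure the trace in $(\kp u,u)_{\p\Omg}$ makes sense for $u \in \dom\Op$ (not just $u \in H^2$), but this is immediate since $\kp \in C^1(\p\Omg)$ is bounded and the $H^1$-trace lies in $L^2(\p\Omg,\C^2)$.
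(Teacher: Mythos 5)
Your proposal is correct and follows essentially the same route as the paper: apply Lemma~\ref{lem:lem2} to $u\in\dom\Op\cap H^2(\Omg,\C^2)$, substitute the boundary condition $u_2=\ii\bn u_1$ so that the Leibniz terms cancel, use the Frenet relation (your packaging $\p_\tau\bn=\ii\kp\bn$ is just the paper's componentwise substitution $\nu_1'=-\kp\nu_2$, $\nu_2'=\kp\nu_1$), note $|u_1|^2=\tfrac12|u|_{\C^2}^2$ on $\p\Omg$, and conclude by the density Lemma~\ref{lem:density}. Your explicit remark that both sides are $\|\cdot\|_{1,\Omg}$-continuous is exactly the (implicit) justification the paper relies on for the density step.
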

\begin{proof}
	Let $u\in\dom\Op \cap H^2(\Omega,\C^2)$ be arbitrary. By Lemma~\ref{lem:lem2} we get, 
	\[
	%\begin{aligned}
	\frb[u]
	:=
	\|\Op u\|^2_{\Omg} - \|\nabla u\|^2_{\Omg}  
	=
	-\big(
	\ii\s_3\p_\tau u, u
	\big)_{\p\Omg}\\
	=
	\ii
	\big( \p_\tau u_2,
	u_2\big)_{\p\Omg}
	-
	\ii
	\big( \p_\tau u_1, u_1\big)_{\p\Omg}.
	%\end{aligned}	
	\]
	The boundary condition $u_2|_{\p\Omg}
	= (\ii \bn) u_1|_{\p\Omg}$ and the chain rule
	for the tangential derivative yield
	\[
	%\begin{aligned}
	\frb[u] 	
%	\ii
%	\big( \ii\p_\tau(\bn u_1), (\ii \bn) u_1\big)_{\p\Omg}
%	-
%	\ii
%	\big(
%	\p_\tau u_1,u_1\big)_{\p\Omg}
%	\\
%	& =
%	\ii
%	\big(\p_\tau(\bn u_1), \bn u_1\big)_{\p\Omg}
%	-
%	\ii
%	\big(
%	\p_\tau u_1,u_1\big)_{\p\Omg}
%	\\
	 =
	\ii
	\big(\bn' u_1
	+ \bn\p_\tau u_1, \bn u_1\big)_{\p\Omg}
	-
	\ii
	\big(\p_\tau u_1,
	u_1\big)_{\p\Omg}
	=
	\ii  
	\big( (\nu_1' + \ii \nu_2')u_1,\bn u_1
	\big)_{\p\Omg}.
	%\end{aligned}
	\]
	The Frenet formula \eqref{eq:Frenet}
	implies $\nu_2' = \kp \nu_1$ and $\nu_1' = - \kp\nu_2$.
	Plugging these identities into the above expression
	for $\frb[u]$ we arrive at
	\[
	%\begin{aligned}
	\frb[u] =
	-
	\big(\kp (\nu_1 + \ii\nu_2)u_1
	,\bn u_1\big)_{\p\Omg}
	= - 
	\big( \kappa\bn u_1, \bn u_1
	\big)_{\p\Omg} =
	- \big(\kappa u_1,u_1\big)_{\p\Omg}\\
	= -
	\frac12(\kp u, u )_{\p\Omg},
	%\end{aligned}
	\]
	and the claim follows using the density of $\dom{\Op}\cap H^2(\Omg,\C^2)$ in $\dom{\Op}$ 
	with respect to the $\|\cdot\|_{1,\Omg}$-norm
	(see Lemma \ref{lem:density}).
\end{proof}
Proposition \ref{prop:comput_square}
yields the following characterization of $\mu_\D$.
\begin{corol}\label{corol:charactvpdisk} 
	The square of the principal eigenvalue
		$\mu_\D$ of $\sfD_\D$ satisfies
	\[
		\mu_\D^2 = \frac{\displaystyle\mu_\D^2 \int_0^1\left(J_0'(\mu_\D r)^2 + J_1'(\mu_\D r)^2\right)r\dd r +  \int_{0}^1\frac{J_1(\mu_\D r)^2}{r}\dd r + J_0(\mu_\D)^2}{\displaystyle\int_{0}^1\big(J_0(\mu_\D r)^2 + J_1(\mu_\D r)^2\big)r \dd r}.
	\]
\end{corol}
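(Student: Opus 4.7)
The plan is to apply Proposition~\ref{prop:comput_square} directly to the eigenfunction $v$ of $\sfD_\D$ associated with $\mu_\D$ which is provided by Proposition~\ref{prop:disk}. Since $v\in\dom{\sfD_\D}$ satisfies $\sfD_\D v = \mu_\D v$, one has
\[
\mu_\D^2\|v\|_\D^2 = \|\sfD_\D v\|_\D^2 = \|\nabla v\|_\D^2 - \tfrac12(\kp v,v)_{\p\D},
\]
so the claim reduces to evaluating the three integrals on the right-hand side in polar coordinates and recognizing the common $2\pi$ factor that cancels between numerator and denominator.

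First, I would compute $\|v\|_\D^2$ using the polar expression of $v$; the angular integration gives a $2\pi$ factor, leaving the denominator $2\pi\int_0^1(J_0(\mu_\D r)^2+J_1(\mu_\D r)^2)\,r\,\dd r$. Next, I would compute $\|\nabla v\|_\D^2$ component by component: for $v_1=J_0(\mu_\D r)$ the gradient squared is $\mu_\D^2 J_0'(\mu_\D r)^2$, while for $v_2=\ii e^{\ii\theta}J_1(\mu_\D r)$ one has
\[
|\nabla v_2|^2 = \mu_\D^2 J_1'(\mu_\D r)^2 + \frac{J_1(\mu_\D r)^2}{r^2},
\]
coming from the $r$- and $\theta$-derivatives respectively. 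Integrating against $r\,\dd r\,\dd\theta$ over $\D$ reproduces exactly the first two terms in the numerator (up to the common factor $2\pi$).

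The remaining ingredient is the boundary term. On the unit disk, with the sign convention of \S\ref{subsub:geom} making the curvature of convex domains non-positive, the signed curvature equals $\kp\equiv -1$. Hence
\[
-\tfrac12(\kp v,v)_{\p\D} = \tfrac12\int_0^{2\pi}\bigl(J_0(\mu_\D)^2+J_1(\mu_\D)^2\bigr)\dd\theta = \pi\bigl(J_0(\mu_\D)^2+J_1(\mu_\D)^2\bigr).
\]
Invoking the characteristic equation $J_0(\mu_\D)=J_1(\mu_\D)$ from Proposition~\ref{prop:disk}, this simplifies to $2\pi\, J_0(\mu_\D)^2$, which, after dividing by the global factor $2\pi$, produces the $J_0(\mu_\D)^2$ term in the numerator of the claimed identity. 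Assembling the three contributions yields the formula. There is no serious obstacle beyond bookkeeping; the only subtle point is the use of the eigenvalue equation $J_0(\mu_\D)=J_1(\mu_\D)$ to combine $J_0(\mu_\D)^2+J_1(\mu_\D)^2$ into the single term $2J_0(\mu_\D)^2$ appearing in the final expression.
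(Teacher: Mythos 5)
Your proposal is correct and follows exactly the paper's route: apply Proposition~\ref{prop:comput_square} to the explicit eigenfunction of Proposition~\ref{prop:disk} and evaluate the three terms in polar coordinates. The paper leaves the bookkeeping implicit, whereas you spell it out, including the sign convention $\kp\equiv-1$ on $\p\D$ and the use of the secular equation $J_0(\mu_\D)=J_1(\mu_\D)$ to reduce the boundary term to $J_0(\mu_\D)^2$; all of these details are accurate.
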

\begin{proof} Let $v$ be as in Proposition~\ref{prop:disk}. By definition we have $\sfD_\D v = \mu_\D v$, which implies 
\[
	\mu_\D^2 = \frac{\|\sfD_\D v\|_{\D}^2}{\|v\|^2_{\D}}.
\]
Using the representation of $\|\sfD_\D v\|_{\D}^2$
following from Proposition~\ref{prop:comput_square} and the explicit expression of $v$ in polar coordinates
given in Proposition~\ref{prop:disk}, one gets the claim.
\end{proof}
%--------------------------------------------------
%We conclude this paragraph with the proof of Lemma \ref{lem:density}.

%--------------------------------------------------

\begin{proof}[Proof of Proposition~\ref{prop:var_form}]
%--------------------------------------------------
By Proposition~\ref{prop:comput_square} the quadratic form of $\Op^2$ is given by
\[
	\frq_\Omg[u] 
	= 
	\|\nabla u\|_\Omg^2 - \frac12(\kp u, u)_{\p\Omg},
	\qquad
	\dom {\frq_\Omg} = \dom \Op.
\]
The spectral theorem implies that $\spe{\Op^2} = \{\mu^2\colon \mu\in\spe{\Op}\}$. Hence, the lowest eigenvalue of $\Op^2$ is $\mu_\Omg^2$.
Finally, the min-max principle~\eqref{eq:min_max1} yields the sought variational characterization.
The inequality $\mu_\Omg^2\le \lm_\Omg$ follows from both
variational characterizations for $\mu_\Omg$
and $\lm_\Omg$, combined with the inclusion $H^1_0(\Omg,\C^2)\subset \dom\Op$.
\end{proof}

%---------------------------------------------
\section{Main result and its proof}\label{sec:mainth}
%---------------------------------------------

The method of the proof is inspired by a trick of G.~Szeg\H{o} presented in~\cite{S54}. His aim was to show a reversed analogue of the Faber-Krahn inequality for the first non-trivial Neumann eigenvalue in two dimensions and to do so, he used a suitably chosen conformal map 
between the unit disk and a generic simply connected domain.

Throughout this section, we identify the Euclidean plane $\R^2$ and the complex plane $\C$. Recall that $\Omg\subset\R^2$ stands for a bounded, simply connected, $C^3$-domain.

In the following, we consider a conformal map $f \colon \D \arr \Omg$. Up to a proper translation of $\Omega$ if needed and without loss of generality, we can assume that $f(0) = 0$. Remark also that $f'(z)\neq 0$ for all $z\in\D$.

As $\Omg$ is $C^3$-smooth, the Kellogg-Warschawski theorem (see~\cite[Chapter II, Theorem 4.3]{GM05} and~\cite[Theorem 3.5]{Po92}) yields that $f$ can be extended up to a function in $C^2(\ov\D)$
denoted again by $f$ with a slight abuse of notation. This extension satisfies the following natural condition $f(\T) = \p\Omg$ and the mapping
\[
	[0,2\pi)  \ni\tt \mapsto \eta(\tt) := f(e^{\ii\tt})
\] 
is a parametrization of $\p\Omg$ (see \cite[Chapter II, \S 4.]{GM05})

\subsection{A transplantation formula}
The first step in order to obtain the desired inequality is the following proposition that provides
an upper bound on the principal eigenvalue $\mu_\Omega$.
\begin{prop}
\label{prop:transplant} 
Let $\Omega\subset\R^2$ be a bounded, simply connected $C^3$-domain and let $f \colon \D \to \Omega$ be a conformal map such that $f(0) = 0$. Then one has
\begin{equation*}\label{eq:ineq_main}
	\mu_\Omega^2 \leq \frac{\cN_1 + \cN_2 + \cN_3}{\cD},
\end{equation*}
where $\cN_3 := 2\pi J_0(\mu_\D)^2$ and
\[
\begin{array}{l}
	\displaystyle\cN_1 
	:= 
	2\pi \mu_\D^2 
	\int_0^1\big(J_0'(r \mu_\D)^2 + J_1'(r \mu_\D)^2\big)r\dd r,\\[2.6ex]
	\displaystyle \cN_2 
	:= 
	\left(\int_0^1\frac{J_1(r\mu_\D)^2}r\dd r\right)
	\left(\int_0^{2\pi}\kappa\big(\eta(\theta)\big)^2 |\eta'(\theta)|^2\dd \theta\right),\\[2.6ex]
	%
	%\displaystyle
	%
	\displaystyle\cD := \int_{0}^1\bigg(\big(J_0(r\mu_\D)^2 + J_1(r\mu_\D)^2\big)\int_{0}^{2\pi}|f'(re^{\ii \theta})|^2 \dd \theta\bigg) r \dd r.
\end{array}
\]
\end{prop}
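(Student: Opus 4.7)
The strategy is to apply the variational characterization of Proposition~\ref{prop:var_form} to a test function $u \in \dom \Op$ built by transplanting through $f$ the disk eigenpair of Proposition~\ref{prop:disk}. The plan has four steps.

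\emph{Construction of the test function.} Because $f'$ never vanishes on the simply connected domain $\D$, there is a holomorphic $g\colon \D \to \C$ with $g^2 = f'$; the function $h := g/\overline{g}$ is then unimodular on $\D$ and satisfies $h|_{\T} = f'/|f'|$. With $v = (v_1,v_2)^\top$ the disk eigenfunction of Proposition~\ref{prop:disk}, I set
\[
u(z) := \bigl(v_1(w),\, h(w)\, v_2(w)\bigr)^\top, \qquad w := f^{-1}(z).
\]
The identity $v_2|_{\T} = ie^{i\theta} v_1|_{\T}$ together with $\bn(f(e^{i\theta})) = e^{i\theta} f'(e^{i\theta})/|f'(e^{i\theta})|$ yields $u_2|_{\p\Omg} = i\bn\, u_1|_{\p\Omg}$, so $u \in \dom \Op$.

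\emph{Evaluation of the basic terms.} Since $|h|=1$, the area change of variables $\dd z = |f'|^2\dd w$ produces $\|u\|_{\Omg}^2 = \cD$ directly. On $\p\Omg$ one has $|u|^2 = 2J_0(\mu_\D)^2$ by the characteristic equation $J_0(\mu_\D) = J_1(\mu_\D)$, and the Gauss--Bonnet formula $\int_{\p\Omg}\kappa\,\dd\Sigma = -2\pi$ (in the paper's orientation convention) gives $-\tfrac12(\kappa u,u)_{\p\Omg} = 2\pi J_0(\mu_\D)^2 = \cN_3$. The gradient term is reduced by conformal invariance of the Dirichlet integral to $\int_{\D}(|\nabla v_1|^2 + |\nabla(hv_2)|^2)\,\dd w$. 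Writing $h = e^{i\alpha}$ with $\alpha = \mathrm{Im}\,\log f'$, which is harmonic and single valued on $\D$ since $f'$ is zero free, and expanding in polar coordinates gives
\[
|\nabla(hv_2)|^2 = \mu_\D^2 J_1'(\mu_\D r)^2 + J_1(\mu_\D r)^2(\partial_r\alpha)^2 + \frac{J_1(\mu_\D r)^2}{r^2}\bigl(1 + \partial_\theta\alpha\bigr)^2,
\]
and the linear cross term in $\partial_\theta \alpha$ integrates to zero in $\theta$ by single valuedness of $\alpha$. Hence
\[
\|\nabla u\|_\Omg^2 = \cN_1 + \int_{\D} J_1(\mu_\D r)^2 |\nabla\alpha|^2\,\dd w + 2\pi\int_0^1 \frac{J_1(\mu_\D r)^2}{r}\,\dd r.
\]

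\emph{Matching with $\cN_2$.} Differentiating $\arg\eta'(\theta) = \theta + \pi/2 + \alpha|_{r=1}$ and applying the sign convention of the paper gives $\kappa(\eta(\theta))|\eta'(\theta)| = -\bigl(1 + \partial_\theta\alpha|_{r=1}\bigr)$, so
\[
\int_0^{2\pi}\kappa^2|\eta'|^2\,\dd\theta = 2\pi + \int_0^{2\pi}\bigl(\partial_\theta\alpha|_{r=1}\bigr)^2\,\dd\theta.
\]
Developing the harmonic function as $\alpha(r,\theta) = \sum_{k\ge 1} r^k(a_k \cos k\theta + b_k \sin k\theta)$ and applying Parseval yields
\[
\int_{\D} J_1(\mu_\D r)^2|\nabla\alpha|^2\,\dd w = 2\pi \sum_{k\ge 1} k^2(a_k^2+b_k^2)\int_0^1 J_1(\mu_\D r)^2 r^{2k-1}\,\dd r,
\]
together with $\int_0^{2\pi}(\partial_\theta\alpha|_{r=1})^2\,\dd\theta = \pi \sum_{k\ge 1} k^2(a_k^2+b_k^2)$. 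Because $r^{2k-1} \le r$ on $[0,1]$ for every $k\ge 1$, each integral above is controlled by $\int_0^1 J_1(\mu_\D r)^2 r\,\dd r$, and the target bound $\|\nabla u\|^2 - \tfrac12(\kappa u,u)_{\p\Omg}\le \cN_1 + \cN_2 + \cN_3$ collapses to the single scalar inequality
\[
2\int_0^1 J_1(\mu_\D r)^2\, r\,\dd r \;\le\; \int_0^1 \frac{J_1(\mu_\D r)^2}{r}\,\dd r.
\]

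This final Bessel-type inequality is where I expect the main effort to lie. Both sides are computable in closed form in terms of $\mu_\D$ and $J_0(\mu_\D) = J_1(\mu_\D)$: the left-hand side via the Lommel identity $\int_0^1 r J_n(\lambda r)^2\,\dd r = \tfrac12[J_n(\lambda)^2 - J_{n-1}(\lambda)J_{n+1}(\lambda)]$ together with the recurrence $J_2 = (2/\mu_\D) J_1 - J_0$, and the right-hand side by integrating $\tfrac{d}{ds}(J_1(s)^2) = 2J_0 J_1 - 2J_1^2/s$ against $\int_0^{\mu_\D} J_0 J_1\,\dd s = \tfrac12(1 - J_0(\mu_\D)^2)$. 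The characteristic equation $J_0(\mu_\D) = J_1(\mu_\D)$ then reduces the claim to an explicit algebraic relation between $\mu_\D$ and $J_1(\mu_\D)^2$, which holds strictly. Dividing the resulting bound by $\|u\|^2 = \cD$ yields Proposition~\ref{prop:transplant}.
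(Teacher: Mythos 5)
Your route is genuinely different from the paper's. The paper transplants the quadratic form to $\D$ through $V_f u = u\circ f$ and evaluates it on the test function $v_\star(r,\theta) = \big(J_0(\mu_\D r),\, \ii\,\bn(\eta(\theta))J_1(\mu_\D r)\big)^\top$, whose phase factor depends on $\theta$ only; by Frenet, $\big|\partial_\theta[\bn(\eta(\theta))]\big| = |\kappa(\eta(\theta))|\,|\eta'(\theta)|$, so the gradient term equals $\cN_1+\cN_2$ \emph{exactly}, the boundary term equals $\cN_3$ exactly (total curvature $-2\pi$), and the denominator is exactly $\cD$ -- no auxiliary inequality is needed. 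You instead keep the test function on $\Omega$ with the unimodular interior factor $h=e^{\ii\alpha}$, $\alpha=\arg f'$ harmonic in $\D$; this is a legitimate element of $\dom\Op$, and your computations of $\|u\|_\Omega^2=\cD$, of the boundary term via $\int_{\p\Omg}\kappa\,\dd\Sigma=-2\pi$, of the gradient (including the vanishing of the cross term in $\partial_\theta\alpha$), and of the identity $\kappa(\eta(\theta))|\eta'(\theta)| = -(1+\partial_\theta\alpha|_{r=1})$ are all correct. The price of this choice is that your numerator contains $\int_\D J_1(\mu_\D r)^2|\nabla\alpha|^2\,\dd w$ instead of the boundary quantity in $\cN_2$, forcing the Fourier-mode comparison and a final scalar Bessel inequality, whereas the paper's test function is engineered so that the Proposition's constants come out as identities.

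The gap is exactly at that last step: you assert that $2\int_0^1 rJ_1(\mu_\D r)^2\,\dd r \le \int_0^1 J_1(\mu_\D r)^2 r^{-1}\,\dd r$ ``holds strictly'' but never prove it, and it is \emph{not} an algebraic consequence of $J_0(\mu_\D)=J_1(\mu_\D)$: your own closed forms give left side $=2J_1(\mu_\D)^2\big(1-\tfrac1{\mu_\D}\big)$ and right side $=\tfrac12-J_1(\mu_\D)^2$, so the claim is $(3\mu_\D-2)J_1(\mu_\D)^2\le \mu_\D/2$, numerically about $0.48\le 0.50$ -- true, but with a finite margin, so as written it would require rigorous enclosures of $\mu_\D$ and $J_1(\mu_\D)$. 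The step can be closed without numerics: since $(J_1(s)/s)'=-J_2(s)/s\le 0$ on $(0,\mu_\D)$, the function $r\mapsto J_1(\mu_\D r)^2/r^2$ is non-increasing, and $\int_0^1(r-2r^3)\,\dd r=0$ with the weight changing sign once from positive to negative at $r=1/\sqrt2$; a Chebyshev-type comparison (the same device the paper uses for $\cD$ in Theorem~\ref{thm:mainthrig0}) then gives $\int_0^1 J_1(\mu_\D r)^2\,(r^{-1}-2r)\,\dd r\ge 0$. With such an argument supplied, your proof is complete; without it, the crux of the bound is missing.
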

\begin{proof}
First of all, note that each term $\cN_j$ (for $j=1,\dots,3$) as well as $\cD$ are well defined. In particular, the first integral appearing in $\cN_2$ is finite because
\[
	J_1(r) \sim \frac{r}2,\quad\text{when } r\to0;
\]
see \cite[Equation (10.7.3)]{OLBC10}.	

Second, observe that the composition map
\[
	V_f\colon H^1(\Omg,\C^2)\arr H^1(\D,\C^2),
	\qquad V_f u := u\circ f,
\]	
defines an isomorphism from $\dom{\sfD_\Omega}$ onto the space
\begin{equation}\label{eqn:setequalome}
	\cL_\Omega \!:=\! V_f\big(\dom{\sfD_\Omega}\big)\!  =
	\! \big\{ v = (v_1,\! v_2)^\top\! \in\! H^1(\D,\C^2)\colon\! v_2(e^{\ii \theta}) \!=\! 
	\ii\bn\big(\eta(\theta)\big)v_1(e^{\ii\theta})\big\}.
\end{equation}
Indeed, as $f$ is a conformal map, it is clear that $V_f\big(H^1(\Omega,\C^2)\big) = H^1(\D,\C^2)$. Now, let $u \in \dom{\sfD_\Omega}$. The boundary conditions read as follows
\[
\begin{aligned}
	u_2(\eta(\theta)) 
	= 
	\ii \bn\big(\eta(\theta)\big) u_1\big(\eta(\theta)\big) 
	&\quad\Longleftrightarrow\quad 
	(u_2 \circ f)(e^{\ii\theta})= \ii \bn\big(\eta(\theta)\big) (u_1 \circ f)(e^{\ii \theta})\\
	&\quad\Longleftrightarrow\quad (V_f u)_2 (e^{\ii\theta}) = \ii \bn\big(\eta(\theta)\big)(V_f u)_1(e^{\ii\theta}).
	\end{aligned}
\]
This implies the inclusion of the set on the  right-hand side of~\eqref{eqn:setequalome} into $\cL_\Omega$. The reverse inclusion is proved in the same fashion. Thus, using the variational characterization of Proposition~\ref{prop:var_form} we obtain
\begin{equation}\label{eq:var_char}
	\begin{aligned}
	\mu_\Omega^2 &= 
	\inf_{u\in\dom{\Op}\setminus\{0\}}
	\frac{{\displaystyle\int_{\Omega}}|\nabla u|_{\R^2\otimes\C^2}^2\dd x - {\displaystyle\frac12\int_{\p\Omega}}\kappa |u|_{\C^2}^2\dd\Sigma}
		{\displaystyle \int_\Omega |u|^2_{\C^2} \dd x }\\
	&= 
	\inf_{v\in\cL_\Omega\setminus\{0\}}
	\frac{{\displaystyle\int_{\D}}|\nabla v|_{\R^2\otimes\C^2}^2\dd x - {\displaystyle\frac12\int_{0}^{2\pi}}
	\kappa\big(\eta(\theta)\big)
	\big|v \big(\eta(\theta)\big)\big|_{\C^2}^2|\eta'(\theta)|\dd\theta}{
		\displaystyle\int_\D |v(x_1+\ii x_2)|_{\C^2}^2|f'(x_1+\ii x_2)|^2\dd x_1\dd x_2},
	\end{aligned}
\end{equation}
where we used that the $L^2$-norm of the gradient is invariant under conformal transformations.

Now, consider the test function $v_\star \in \cL_\Omega$ defined in polar coordinates as
\[
	v_\star\big(r,\theta\big) := 
	\begin{pmatrix}
		J_0(r\mu_\D)\\
		\ii\bn\big(\eta(\theta)\big)J_1(r\mu_\D)
	\end{pmatrix}.
\]
Plugging this test function into the 
variational characterisation~\eqref{eq:var_char}
of $\mu_\Omg^2$ we get
\[
	\mu_\Omega^2 
	\leq 
	\frac{{\displaystyle\int_{\D}}|\nabla v_\star|_{\R^2\otimes\C^2}^2\dd x 
	-
	{\displaystyle\frac12\int_{0}^{2\pi}}\kappa\big(\eta(\theta)\big) |v_\star \big(\eta(\theta)\big)|_{\C^2}^2|\eta'(\theta)|\dd\theta}
	{\displaystyle\int_\D |v_\star(x_1,x_2)|^2 |f'(x_1+\ii x_2)|^2\dd x_1 \dd x_2}.
\]
Let us compute each term in the right-hand side of the previous inequality. First, we have
\[
\begin{aligned}
	\int_\D |\nabla v_\star|_{\R^2\otimes\C^2}^2\dd x 
	&= 
	2\pi\mu_\D^2
	\int_0^1\Big(J_0'(r\mu_\D)^2 + J_1'(r\mu_\D)^2\Big)r\dd r\\&\qquad + 
	\bigg(
		\int_0^1\frac{J_1(r\mu_\D)^2}{r}\dd r
	\bigg)
	\bigg(\int_0^{2\pi} |\bn'(\eta(\theta))|^2 |\eta'(\theta)|^2
	\dd \theta\bigg)\\
	& = \cN_1 + \cN_2.
\end{aligned}
\]
Second, we obtain
\[
	-\frac12\int_0^{2\pi}
	\kappa\big(\eta(\theta)\big)
	|v_\star\big(\eta(\theta)\big)|^2_{\C^2} |\eta'(\theta)|\dd \theta 
	= 
	-J_0(r\mu_\D)^2 
	\int_{\p\Omega}\kappa\dd\Sigma 
	= 
	-2\pi J_0(r\mu_\D)^2W_\gamma,
\]
where $W_\gamma$ is the winding number of $\gamma$. 
As $\gamma$ is an arc-length
clockwise parametrization of $\p\Omega$,
we have $W_\gamma = -1$. It implies
\[
	-\frac12\int_0^{2\pi}\kappa\big(\eta(\theta)\big)|v_\star\big(\eta(\theta)\big)|^2 |\eta'(\theta)|\dd \theta 
	= \cN_3.
\]
Finally, a straightforward computation yields
\[
	\int_\D |v_\star(x_1+\ii x_2)|^2 |f'(x_1+\ii x_2)|^2\dd x_1 \dd x_2 = \cD.	\qedhere
\]
\end{proof}

%-----------------------------------------------------
\subsection{The Faber-Krahn-type inequality: rigorous statement \& proof} 
%-----------------------------------------------------

\subsubsection{Hardy spaces, conformal maps and related geometric bounds}

Recall that for any holomorphic function $g\colon\D\arr\C$ one defines its norm in the Hardy
space $\cH^2(\D)$ as follows
\[
\|g\|_{\cH^2(\D)} = \sup_{0\le r < 1}
\left(\frac{1}{2\pi}\int_0^{2\pi} |g(r e^{\ii\tt})|^2\dd \tt\right)^{1/2}.
\]
By definition, $g\in \cH^2(\D)$ means that
$\|g\|_{\cH^2(\D)} < \infty$.
If the holomorphic function $g\colon\D\arr\C$ extends up to a continuous function on $\ov{\D}$,
then $g\in \cH^2(\D)$ and 
\[
\|g\|_{\cH^2(\D)} = \left(\frac{1}{2\pi}\int_0^{2\pi} |g( e^{\ii\tt})|^2\dd \tt\right)^{1/2}.
\]
Further details on Hardy spaces can be found in~\cite[Chapter 17]{R87}.

Recall that any conformal map $f \colon \D \to \Omega$  with $f(0) = 0$ can be written as a power series
\begin{equation}\label{eq:f_expansion}
f(z) = \sum_{n\in\N} c_n z^n,
\end{equation}
for some sequence of complex coefficients $c_n\in\C$, 
$n\in\N$.

The following proposition can be found, \emph{e.g.}, in \cite[\S 3.10.2]{K16}.

\begin{prop}[Area formula]\label{prop:areaform} 
	The area of $\Omega$ is expressed through the coefficients $c_n\in\C$ of the conformal map $f$ as
	\[
	|\Omega| = \pi \sum_{n=1}^\infty n |c_n|^2.
	\]
\end{prop}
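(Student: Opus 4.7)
The plan is to reduce the area of $\Omega$ to an integral over $\D$ via the conformal change of variables, then expand the integrand in the power series~\eqref{eq:f_expansion} and exploit orthogonality of trigonometric functions. More precisely, since $f\colon\D\to\Omega$ is conformal and univalent, the Jacobian of $f$ viewed as a real map is $|f'(z)|^2$, so the standard change of variables formula yields
\[
|\Omega| = \int_{\Omega}\dd \bx = \int_{\D}|f'(z)|^{2}\dd A(z),
\]
where $\dd A$ denotes the Lebesgue measure on $\D$.

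Next, I would differentiate the power series~\eqref{eq:f_expansion} term by term to obtain $f'(z) = \sum_{n\geq 1} n c_{n} z^{n-1}$, a representation that is valid in $\D$ and converges uniformly on every compact subset of $\D$. Switching to polar coordinates $z = r e^{\ii\theta}$ with $r\in[0,1)$ and $\theta\in[0,2\pi)$, for each fixed $r_{0}<1$ the uniform convergence on the circle of radius $r_{0}$ allows one to compute
\[
\int_{0}^{2\pi}|f'(r_{0}e^{\ii\theta})|^{2}\dd\theta
= \sum_{m,n\geq 1} m n\, c_{m}\overline{c_{n}}\, r_{0}^{m+n-2} \int_{0}^{2\pi} e^{\ii(m-n)\theta}\dd\theta
= 2\pi \sum_{n\geq 1} n^{2}|c_{n}|^{2} r_{0}^{2n-2},
\]
using orthogonality of the characters $\theta\mapsto e^{\ii n\theta}$ on $[0,2\pi)$.

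Then I would multiply by $r_{0}$, integrate with respect to $r_{0}$ over $[0,r]$ for some $r<1$, and interchange sum and integral (legitimate since all terms are non-negative, by Tonelli) to obtain
\[
\int_{r\D}|f'(z)|^{2}\dd A(z)
= 2\pi \sum_{n\geq 1} n^{2}|c_{n}|^{2}\int_{0}^{r}\rho^{2n-1}\dd\rho
= \pi \sum_{n\geq 1} n |c_{n}|^{2}\, r^{2n}.
\]
Finally, letting $r\to 1^{-}$, the left-hand side increases monotonically to $|\Omega|$ (since the images $f(r\D)$ exhaust $\Omega$ from inside as $f$ is univalent and $f(0)=0\in\Omega$), while the right-hand side increases to $\pi\sum_{n\geq 1} n|c_{n}|^{2}$ by monotone convergence. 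This yields the announced identity.

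The only delicate point is the justification of the limiting procedure, since a priori the series $\sum n |c_{n}|^{2}$ might diverge; the monotone convergence argument handles both sides simultaneously and shows that finiteness of $|\Omega|$ is equivalent to summability, so no separate finiteness argument is needed. The rest is a straightforward computation relying only on the conformality of $f$ and Parseval-type orthogonality on circles.
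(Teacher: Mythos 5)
Your proof is correct: the change of variables with Jacobian $|f'|^2$, Parseval's identity on circles, and the monotone-convergence passage $r\to 1^-$ together give exactly the classical area formula, and the limiting step is handled properly via Tonelli. The paper itself offers no proof of this proposition (it only cites \cite[\S 3.10.2]{K16}), and your argument is precisely the standard one behind that reference, so there is nothing to add.
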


Recall that the origin
is inside $\Omg$ (i.e. $\bo\in \Omega$) and
that the radii $r_\rmi$, $r_\rmo$, and $r_\rmc$
are defined as
\begin{equation}\label{eqn:definnoutradius}
r_\rmi 
:= \min_{\bx\in\p\Omg}|\bx|,
\qquad 
r_\rmo := \max_{\bx\in\p\Omg}|\bx|,
\qquad
r_\rmc = \frac{1}{\kp_\star}.
\end{equation}
It is obvious that $r_\rmo \ge r_\rmi$
and it can also be checked that $r_\rmo \ge r_\rmc$.
In general there is no relation of this kind between
$r_\rmi$ and $r_\rmc$.

The next proposition is a consequence of the Schwarz lemma (see Koebe's estimate in \cite[Chapter I, Theorem 4.3]{GM05}).

\begin{prop}\label{prop:derradi} 
	The derivative of the conformal map $f$ at $0$ and the radius $r_\rmi$ defined in~\eqref{eqn:definnoutradius} satisfy
	\[
	|f'(0)| = |c_1| \geq r_\rmi.
	\]
\end{prop}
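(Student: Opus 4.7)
The plan is to establish $|f'(0)| \geq r_\rmi$ by a direct application of the Schwarz lemma to the inverse conformal map, and then to recognize $f'(0) = c_1$ from the power series expansion~\eqref{eq:f_expansion}.

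First I would observe that since $r_\rmi = \min_{\bx\in\p\Omg}|\bx|$ and $0 \in \Omega$, the open disk $\D_{r_\rmi}$ of radius $r_\rmi$ centered at the origin is contained in $\Omega$. Next, letting $g := f^{-1}\colon \Omega \to \D$, I note that $g$ is holomorphic, univalent, and satisfies $g(0) = 0$ (since $f(0) = 0$). I would then define the auxiliary map
\[
h\colon \D \to \C, \qquad h(z) := g(r_\rmi z),
\]
which is well-defined because $r_\rmi z \in \D_{r_\rmi} \subset \Omega$ for all $z \in \D$. Since $g$ takes values in $\D$, we have $|h(z)| < 1$ for $z \in \D$, and $h(0) = 0$.

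The classical Schwarz lemma then yields $|h'(0)| \leq 1$, i.e. $r_\rmi |g'(0)| \leq 1$. Differentiating the identity $g\big(f(z)\big) = z$ at $z = 0$ gives $g'(0) f'(0) = 1$, and hence
\[
|f'(0)| = \frac{1}{|g'(0)|} \geq r_\rmi.
\]
Finally, differentiating the power series~\eqref{eq:f_expansion} term by term and evaluating at $z = 0$ gives $f'(0) = c_1$, which establishes the proposition.

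The argument is entirely straightforward; there is no real obstacle, provided one invokes the Schwarz lemma in the correct direction. (One might be tempted to invoke Koebe's one-quarter theorem as the reference suggests, but that yields the opposite-direction estimate $r_\rmi \geq |f'(0)|/4$; the Schwarz lemma applied to $f^{-1}$ is the right tool for this bound.)
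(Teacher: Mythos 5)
Your proof is correct and is essentially the paper's intended argument: the paper simply cites the Schwarz lemma (via the two-sided Koebe estimate in \cite[Chapter I, Theorem 4.3]{GM05}, whose upper bound $\mathrm{dist}(0,\p\Omg)\le |f'(0)|$ is exactly the Schwarz-lemma half you reproduce by applying Schwarz to $g(r_\rmi z)$ with $g=f^{-1}$). Your parenthetical is also apt: the one-quarter theorem gives only the reverse inequality, and it is the Schwarz-lemma side of Koebe's estimate that is being used here.
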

Next, we provide the geometric bound on $\|f'\|_{\cH^2(\D)}$ that is a consequence of~\cite[Theorem 1]{K17}. To this aim, we define for $a,b\in (0,+\infty)$ the function $\Phi$ as
\begin{equation}\label{eqn:defPhi}
	\Phi(a,b) := 
	\left\{\begin{array}{ll} 
	\frac{\ln(a) - \ln(b)}{a-b}, & \text{if } a\neq b;\\
	\frac1a,&\text{if } a=b.
	\end{array}\right.
\end{equation}
\begin{prop}[Kovalev's bound]\label{prop:kbound} 
	Let $\Omega\subset\R^2$ be a bounded, convex, $C^3$-domain and let $f \colon \D \to \Omega$ be a conformal map such that $f(0) = 0$. Then one has
	\[
	\|f'\|_{\cH^2(\D)} \le  \sup_{z\in\ov\D}|f'(z)| 
	\leq 
	r_\rmc \exp\left(2(r_\rmo - r_\rmc)\Phi(r_\rmi,r_\rmc)\right),
	\]
	with $\Phi$ defined as in~\eqref{eqn:defPhi}.
\end{prop}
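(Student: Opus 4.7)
The plan is to handle the two inequalities separately. The first inequality $\|f'\|_{\cH^2(\D)} \le \sup_{z\in\ov\D}|f'(z)|$ should follow directly from the definition of the Hardy norm. Since $\Omega$ is a $C^3$-domain, the Kellogg--Warschawski theorem already invoked in the paper guarantees that $f$ admits a $C^2(\ov\D)$ extension, so in particular $f'$ is continuous on $\ov\D$. Consequently, for every $r\in[0,1)$ one has
\[
\frac{1}{2\pi}\int_0^{2\pi}|f'(re^{\ii\tt})|^2\,\dd\tt \le \sup_{z\in\ov{\D}}|f'(z)|^2,
\]
and taking the supremum over $r\in[0,1)$ yields the first inequality.

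For the second inequality, the plan is to invoke Kovalev's bound from \cite[Theorem~1]{K17} as a black box. That result provides, for any conformal map $f\colon \D\to\Omg$ of a bounded convex domain with $f(0)=0$, a pointwise upper bound on $|f'|$ on $\ov{\D}$ in terms of integral/exponential quantities built from the geometry of $\p\Omg$ (roughly, from the support function, or equivalently from curvature data along the boundary). I would then specialize Kovalev's bound to the present convex, $C^3$-setting, which amounts to translating his general expression into the three geometric scalars we have at our disposal, namely $r_\rmi$, $r_\rmo$, and $r_\rmc = 1/\kp_\star$.

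The main work, and the step I expect to be the main obstacle, is the simplification from Kovalev's integral form to the closed expression $r_\rmc\exp\bigl(2(r_\rmo-r_\rmc)\Phi(r_\rmi,r_\rmc)\bigr)$. The appearance of $\Phi(a,b) = (\ln a-\ln b)/(a-b)$ strongly suggests that the relevant integral in Kovalev's estimate is of the form $\int_{r_\rmc}^{r_\rmo}\frac{\dd t}{t}$ or the analogous logarithmic mean between $r_\rmi$ and $r_\rmc$, which must be uniformly bounded in the convex case using the inequalities $r_\rmo\ge r_\rmc$ and the geometric meaning of $r_\rmc$ as the minimal radius of curvature. Here one has to use convexity in an essential way (so that $\kp\le 0$ and $\kp_\star = -\min\kp$), and the degenerate case $r_\rmi = r_\rmc$ needs the L'H\^opital-type definition in \eqref{eqn:defPhi}. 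Once these quantities are identified, combining them with the base factor $r_\rmc$ that corresponds to the disk of radius $r_\rmc$ yields the bound exactly as stated.
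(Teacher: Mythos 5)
Your first inequality is fine: since $\Omg$ is $C^3$, the Kellogg--Warschawski theorem gives $f\in C^2(\ov\D)$, so $f'$ is bounded on $\ov\D$ and $\|f'\|_{\cH^2(\D)}\le\sup_{\ov\D}|f'|$ follows at once from the definition of the Hardy norm (indeed it already holds with the supremum over $\D$). The genuine gap is in the second inequality: you invoke \cite[Theorem 1]{K17} as if it were a quantitative pointwise bound on $|f'|$ in ``integral form'' that merely needs to be simplified into $r_\rmc\exp\big(2(r_\rmo-r_\rmc)\Phi(r_\rmi,r_\rmc)\big)$. That is not what Kovalev's theorem provides. It is a sufficient \emph{criterion} — expressible through the three radii as $(r_\rmo-r_\rmc)\Phi(r_\rmi,r_\rmc)+\tfrac12\log(r_\rmc)\le 0$ — guaranteeing the existence of a conformal map $g\colon\D\to\Omg$ with $g(0)=0$ and $\sup_{z\in\ov\D}|g'(z)|\le 1$ (a conformal contraction). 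There is no explicit bound on $\sup|f'|$ for a general convex domain to ``translate,'' hence no integral of the form $\int_{r_\rmc}^{r_\rmo}\frac{\dd t}{t}$ to estimate; and convexity does not enter through $\kp\le 0$ as you suggest — it is simply the standing hypothesis of Kovalev's theorem.

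The missing idea, and the way the paper actually argues (in the remark following the proposition), is a normalization by scaling: set $\lambda := r_\rmc^{-1}\exp\big(-2(r_\rmo-r_\rmc)\Phi(r_\rmi,r_\rmc)\big)$ and consider $\lambda\Omg$, whose radii are $\lambda r_\rmi$, $\lambda r_\rmo$, $\lambda r_\rmc$. Since $\Phi(\lambda a,\lambda b)=\lambda^{-1}\Phi(a,b)$, this choice makes Kovalev's criterion hold (with equality), so some conformal map $g\colon\D\to\lambda\Omg$ fixing $0$ satisfies $\sup_{\ov\D}|g'|\le 1$. One then needs the observation that conformal maps of $\D$ onto $\lambda\Omg$ fixing the origin differ only by precomposition with a rotation, so the same bound applies to $h:=\lambda f$, giving $\lambda\sup_{\ov\D}|f'|\le 1$, which is exactly the stated estimate. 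Without this rescaling step (and the uniqueness-up-to-rotation argument needed to transfer the contraction property to the given $f$), your proposal does not produce the claimed constant.
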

\begin{rem} To recover Kovalev's bound in Proposition~\ref{prop:kbound} from~\cite[Theorem 1]{K17}, set $\lambda := (r_\rmc)^{-1} \exp\left(-2(r_\rmo - r_\rmc)\Phi(r_\rmi,r_\rmc)\right)$ and remark that for the rescaled domain $\lambda \Omega$ the radii $R_\rmi = \min_{x\in\partial(\lambda\Omega)}|x|$ and $R_\rmo = \max_{x\in\partial(\lambda\Omega)}|x|$ as well as $R_\rmc$, the minimal radius of curvature of $\lambda\Omega$, satisfy
	\[
	R_\rmi = \lambda r_\rmi,\quad R_\rmc = \lambda r_\rmc,\quad R_\rmc = \lambda r_\rmc.
	\]
	Hence, with our choice of $\lambda$ we obtain
	\[
	(R_\rmo - R_\rmc)\Phi(R_\rmi,R_\rmc) + \frac12\log(R_\rmc) = (r_\rmo - r_\rmc)\Phi(r_\rmi,r_\rmc) + \frac12\log(r_\rmc) + \frac12\log(\lambda) = 0.
	\]
	Thus, by \cite[Theorem 1]{K17}, there exists a conformal map $g \colon \D \arr \lambda\Omega$ with $g(0) = 0$ and $\sup_{z\in\overline{\D}}|g'(z)| \leq 1$. Because any conformal map from $\D$ to $\lm\Omg$ that fixes $0$ is a composition of $g$ with a rotation, any conformal map $h$ from $\D$ to $\lm\Omg$ such that $h(0) = 0$ also satisfies $\sup_{z\in\ov{\D}}|h'(z)| \leq 1$.
	
	Now, consider $h(z) := \lm f(z)$ for all $z\in \D$. As defined $h$ is a conformal map from $\D$ to $\lm\Omg$ and $h(0) = 0$. Thus, we have
	\[
	1 \geq \sup_{z\in\ov\D}|h'(z)| = \lambda \sup_{z \in \ov\D}|f'(z)|.
	\]
\end{rem}

Finally, we provide a bound on $\|f'\|_{\cH^2(\D)}$
for nearly circular domains that 
follows from~\cite[Equation 2.9]{G62} with $p = 2$.
\begin{prop}[Gaier's bound]\label{prop:gbound}
	Let $\Omega\subset\R^2$ be a bounded, $C^3$ and
	nearly circular domain in the sense of Definition~\ref{dfn:nc} with $\rho_\star \in [0,1)$.
	Let $f \colon \D \to \Omega$ be a conformal map such that $f(0) = 0$.
	Then one has
	\[
		\|f'\|_{\cH^2(\D)} \le r_\rmo
		\left(\frac{1 + \rho_\star^2}{1-\rho_\star^2}\right)^{1/2}.
	\]
\end{prop}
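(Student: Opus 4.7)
The plan is to pass to the boundary parametrization, exploit the harmonic-conjugate structure induced by the conformal map, and use Parseval's identity on the unit circle. Since $\Omega$ is a $C^3$-domain, by the Kellogg-Warschawski theorem $f$ extends to a $C^2$-map on $\overline{\D}$, and the boundary correspondence takes the form $f(e^{\ii\tt}) = \rho(\phi(\tt))\,e^{\ii\phi(\tt)}$, where $\phi$ is a $C^1$-diffeomorphism (viewed modulo $2\pi$) with $\int_0^{2\pi}\phi'(\tt)\,d\tt = 2\pi$. Comparing $\tfrac{d}{d\tt}f(e^{\ii\tt}) = \ii e^{\ii\tt} f'(e^{\ii\tt})$ with direct differentiation of the polar expression and taking moduli gives the pointwise boundary identity
\[
|f'(e^{\ii\tt})|^2 = \phi'(\tt)^2\bigl(\rho(\phi(\tt))^2 + \rho'(\phi(\tt))^2\bigr) \le (1+\rho_\star^2)\,r_\rmo^2\,\phi'(\tt)^2,
\]
where I have used the near-circularity bound $|\rho'/\rho| \le \rho_\star$ together with $\rho \le r_\rmo$. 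Integrating on $[0,2\pi)$ reduces the claim to the single estimate $\int_0^{2\pi}\phi'(\tt)^2\,d\tt \le 2\pi/(1-\rho_\star^2)$.

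To establish this, I would introduce the auxiliary holomorphic function $h(z) := \log(f(z)/z)$, which is well-defined on the simply-connected domain $\D$ because $f$ is univalent with $f(0)=0$, so $f(z)/z$ is non-vanishing on $\D$. Writing $h = u + \ii v$, the boundary values read $u(e^{\ii\tt}) = \log \rho(\phi(\tt))$ and (for a continuous branch of the argument) $v(e^{\ii\tt}) = \phi(\tt) - \tt$, whence
\[
u_\tt(\tt) = \phi'(\tt)\,\frac{\rho'(\phi(\tt))}{\rho(\phi(\tt))}, \qquad v_\tt(\tt) = \phi'(\tt) - 1.
\]
Since $h$ is holomorphic on $\D$ and extends $C^1$ to $\overline{\D}$, its boundary Fourier series contains only non-negative frequencies, and Parseval's identity on $\T$ yields the harmonic-conjugate relation $\int_0^{2\pi} u_\tt^2\,d\tt = \int_0^{2\pi} v_\tt^2\,d\tt$.

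Substituting the expressions above, bounding $(\rho'/\rho)^2 \le \rho_\star^2$ on the left-hand side, expanding $(\phi'-1)^2$ on the right-hand side, and inserting $\int_0^{2\pi}\phi'(\tt)\,d\tt = 2\pi$ yields $(1-\rho_\star^2)\int_0^{2\pi}\phi'(\tt)^2\,d\tt \le 2\pi$, which is exactly the required estimate. Combining this with the pointwise bound produces $\|f'\|_{\cH^2(\D)}^2 \le r_\rmo^2\,(1+\rho_\star^2)/(1-\rho_\star^2)$, and the claim follows. The main conceptual step, and what I expect to be the crux of the argument, is identifying the auxiliary function $h = \log(f/z)$ that converts the near-circularity hypothesis on $\partial\Omega$ into a harmonic-conjugate identity on $\T$; once this is in place, the remainder reduces to elementary Fourier-series manipulations.
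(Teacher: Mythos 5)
Your proof is correct. Note that the paper does not actually prove this proposition: it imports the estimate directly from Gaier's paper \cite[Equation 2.9]{G62} (with $p=2$), so what you have written is a self-contained reconstruction of essentially the classical argument behind that citation. The key steps all check out: since $0\in\Omega$ and $f\in C^2(\ov\D)$ by Kellogg--Warschawski, the boundary curve $\tt\mapsto f(e^{\ii\tt})=\rho(\phi(\tt))e^{\ii\phi(\tt)}$ is $C^1$ and nonvanishing, giving the pointwise identity $|f'(e^{\ii\tt})|^2=\phi'(\tt)^2\big(\rho(\phi(\tt))^2+\rho'(\phi(\tt))^2\big)\le (1+\rho_\star^2)r_\rmo^2\,\phi'(\tt)^2$ (the sign of $\phi'$ is irrelevant here, so you do not even need $\phi$ to be a diffeomorphism); the function $h=\log(f/z)$ is a well-defined holomorphic logarithm on the simply connected $\D$ because $f/z$ is zero-free, and since $h\in C^1(\ov\D)$ its boundary Fourier series has only nonnegative frequencies, so Parseval gives $\int_0^{2\pi}u_\tt^2\,\dd\tt=\int_0^{2\pi}v_\tt^2\,\dd\tt$; combined with $\int_0^{2\pi}\phi'\,\dd\tt=2\pi$ (winding number one) and $|\rho'/\rho|\le\rho_\star$ this yields $(1-\rho_\star^2)\int_0^{2\pi}\phi'^2\,\dd\tt\le 2\pi$, and the conclusion follows because $f'$ extends continuously to $\ov\D$, so the $\cH^2(\D)$-norm coincides with the boundary integral, exactly as recalled in the paper. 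The only cosmetic caveat is that $v(e^{\ii\tt})=\phi(\tt)-\tt$ holds up to an additive constant multiple of $2\pi$ depending on the chosen branches, which disappears upon differentiation, so nothing is affected.
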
	

\subsubsection{An abstract upper bound}

First, we formulate our main result for general
simply connected domains.
This estimate involves the norm $\|f'\|_{\cH^2(\D)}$ of the conformal map $f\colon\D\rightarrow\Omg$.
\begin{thm}\label{thm:mainthrig0} 
	Let $\Omega \subset \R^2$ be a bounded,
	simply connected, $C^3$-domain
	with $0\in\Omg$. 
	Then the following inequality holds
	\[
		\mu_1(\Omega) \leq 
		\left(\frac{2\pi}{|\Omg| + \pi r_\rmi^2}\right)^{1/2}
		\kp_\star\|f'\|_{\cH^2(\D)} \mu_1(\D),
	\]
	where $\mu_1(\Omg)$ and $\mu_1(\D)$ are  the principal 
	eigenvalues of the massless Dirac operators $\Op$
	and $\sfD_{\D}$, respectively.
	Moreover, the above inequality is strict
	unless $\Omg$ is a disk centred at the origin.
\end{thm}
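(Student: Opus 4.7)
The plan is to start from the transplantation estimate of Proposition~\ref{prop:transplant}, $\mu_\Omg^2 \le (\cN_1+\cN_2+\cN_3)/\cD$, and to bound the numerator from above and the denominator from below so that a common factor
\[
B := \int_0^1 \bigl(J_0(\mu_\D r)^2 + J_1(\mu_\D r)^2\bigr)\, r\,\dd r
\]
cancels. Set $h(r):=J_0(\mu_\D r)^2 + J_1(\mu_\D r)^2$, $A_1 := \cN_1/(2\pi)$, $A_3 := \cN_3/(2\pi) = J_0(\mu_\D)^2$, and $A_2 := \int_0^1 J_1(\mu_\D r)^2/r\,\dd r$, so that Corollary~\ref{corol:charactvpdisk} reads $A_1+A_2+A_3 = \mu_\D^2 B$.

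A preparatory lemma I would establish first is $\kp_\star\|f'\|_{\cH^2(\D)} \ge 1$, with equality only when $\Omg$ is a disk centred at the origin. This follows by combining the Hopf Umlaufsatz inequality $\int_{\p\Omg}|\kp|\,\dd\Sigma \ge 2\pi$ with the Cauchy--Schwarz estimate
\[
|\p\Omg|^2 = \Bigl(\int_0^{2\pi}|\eta'(\theta)|\,\dd \theta\Bigr)^2 \le 2\pi\int_0^{2\pi}|f'(e^{\ii\theta})|^2\,\dd\theta = 4\pi^2\|f'\|_{\cH^2(\D)}^2,
\]
using $|\eta'(\theta)| = |f'(e^{\ii\theta})|$. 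With $|\kp|\le\kp_\star$ and the same identity for $|\eta'|^2$, the curvature term in the numerator is bounded by $\cN_2 \le 2\pi\kp_\star^2\|f'\|_{\cH^2(\D)}^2 A_2$. Substituting $A_2 = \mu_\D^2 B - A_1 - A_3$ and invoking the preparatory step then gives
\[
\cN_1+\cN_2+\cN_3 \le 2\pi\kp_\star^2\|f'\|_{\cH^2(\D)}^2 \mu_\D^2 B - 2\pi(A_1+A_3)\bigl(\kp_\star^2\|f'\|_{\cH^2(\D)}^2-1\bigr) \le 2\pi\kp_\star^2\|f'\|_{\cH^2(\D)}^2 \mu_\D^2 B.
\]

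For the denominator, expanding $f(z) = \sum_{n\ge 1}c_n z^n$ as in \eqref{eq:f_expansion} and using $|v_\star|_{\C^2}^2 = h(|z|)$ yields
\[
\cD = 2\pi|c_1|^2 B + 2\pi\sum_{n\ge 2}n^2|c_n|^2\beta_n, \qquad \beta_n := \int_0^1 h(r)\,r^{2n-1}\,\dd r.
\]
The Bessel derivative identity $h'(r) = -2J_1(\mu_\D r)^2/r \le 0$ shows that $h$ is non-increasing on $[0,1]$, so $\beta_n \ge h(1)/(2n)$; moreover an integration by parts using this same identity together with the classical Bessel orthogonality formula produces the closed form $B = J_0(\mu_\D)^2(2-1/\mu_\D) < 2 J_0(\mu_\D)^2 = h(1)$, hence $\beta_n \ge B/(2n)$ for every $n\ge 1$. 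Combining with Proposition~\ref{prop:areaform}, which gives $|\Omg|=\pi\sum_{n\ge 1} n|c_n|^2$, and Proposition~\ref{prop:derradi}, which gives $|c_1|\ge r_\rmi$, leads to
\[
\cD \ge 2\pi|c_1|^2 B + \pi B\sum_{n\ge 2} n|c_n|^2 = B\bigl(|\Omg|+\pi|c_1|^2\bigr) \ge B\bigl(|\Omg|+\pi r_\rmi^2\bigr).
\]
Dividing the two bounds cancels $B$ and yields $\mu_\Omg^2 \le 2\pi\kp_\star^2\|f'\|_{\cH^2(\D)}^2\mu_\D^2/(|\Omg|+\pi r_\rmi^2)$, which is the claim after taking square roots. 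Tracking the equalities forces $\kp_\star\|f'\|_{\cH^2(\D)}=1$ (so $\Omg$ is a centred disk by the preparatory step) together with $|c_1|=r_\rmi$, hence $\Omg = \D_{r_\rmi}$. The step I expect to be most delicate is the denominator bound: it crucially uses the monotonicity of the Bessel kernel $h$, the explicit evaluation of $B$ needed to show $h(1)>B$, and Koebe's inequality, and these must be assembled carefully so that the contributions $|\Omg|$ and $\pi r_\rmi^2$ are recovered simultaneously without loss.
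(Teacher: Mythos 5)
Your proposal is correct and follows the same architecture as the paper's proof: start from Proposition~\ref{prop:transplant}, bound $\cN_2$ by $2\pi\kp_\star^2\|f'\|_{\cH^2(\D)}^2\int_0^1 J_1(\mu_\D r)^2 r^{-1}\dd r$, bound $\cD$ from below by $\big(|\Omg|+\pi r_\rmi^2\big)\int_0^1\big(J_0(\mu_\D r)^2+J_1(\mu_\D r)^2\big)r\dd r$ via the power series of $f$, the area formula and Koebe's estimate, use Corollary~\ref{corol:charactvpdisk} to reassemble $\mu_\D^2$, and exploit $\kp_\star\|f'\|_{\cH^2(\D)}\ge 1$ (which the paper also derives from the total-curvature identity and Cauchy--Schwarz; your substitution of $A_2=\mu_\D^2B-A_1-A_3$ is algebraically equivalent to the paper's $\max\{1,\kp_\star^2\|f'\|^2_{\cH^2(\D)}\}$ step). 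The two genuine variations are in the denominator and in the strictness tracking. For the denominator, the paper proves that $H(r)=r\big(J_0(\mu_\D r)^2+J_1(\mu_\D r)^2\big)$ is \emph{increasing} (via $H'=J_0^2-J_1^2>0$ on $(0,1)$) and applies Chebyshev's integral inequality, whereas you use that $h(r)=J_0(\mu_\D r)^2+J_1(\mu_\D r)^2$ is \emph{non-increasing} ($h'=-2J_1(\mu_\D r)^2/r$) together with the explicit Lommel evaluation $B=J_0(\mu_\D)^2(2-1/\mu_\D)<2J_0(\mu_\D)^2=h(1)$; both routes give $n^2\beta_n\ge \tfrac n2 B$, your computation checks out, and in fact yields the strict bound $\beta_n>B/(2n)$. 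For strictness, the paper gets it from the denominator (the Chebyshev step is strict unless $c_n=0$ for all $n\ge2$), while you route it through the numerator and the claim that $\kp_\star\|f'\|_{\cH^2(\D)}=1$ forces a centred disk; that equality case is true but is asserted rather than proved in your sketch (one needs that $|\kp|\equiv\kp_\star$ forces a circle and that $|f'|$ constant on $\T$ with $f(0)=0$ forces $f$ linear, e.g.\ by the maximum principle applied to the harmonic function $\log|f'|$). This is the only soft spot; it can either be filled by that short argument or bypassed entirely by using your own strict inequality $\beta_n>B/(2n)$ in the denominator, exactly as the paper does.
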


\begin{proof}
	Throughout the proof we set $\mu = \mu_1(\D) > 0$ for the principal eigenvalue
	of $\OpD$. The proof relies on the analysis of each term appearing in Proposition~\ref{prop:transplant}.\\
	
	\paragraph{\textbf{The denominator $\cD$}}
	Let us start by analysing the denominator $\cD$. To do so, we will need the following claim, whose proof is postponed until the end of this paragraph.

	\noindent{\bf Claim A.}
	\emph{The function
	$r \mapsto H(r) := 
	r\left[J_0(\mu r)^2 + J_1(\mu r)^2 \right]$
	is monotonously increasing on the interval
	$(0,1)$.}

	Recall that
	\[
	\cD = 
	\int_0^1\bigg(\big(J_0(r\mu)^2 + J_1(r\mu)^2\big)\int_0^{2\pi}|f'(re^{\ii \theta})|^2 \dd \theta\bigg) r \dd r.
	\]
	Parseval's identity gives
	\[
	\int_0^{2\pi}|f'(re^{\ii \theta})|^2 \dd \theta 
	= 
	2\pi \sum_{n\in\N}n^2 |c_n|^2 r^{2n-2} 
	= 
	2\pi|c_1|^2 + 2\pi\sum_{n \geq 2}n^2|c_n|^2r^{2n-2}.
	\]
	The denominator $\cD$ rewrites as 
	\begin{equation}\label{eqn:splitD}
	\cD 
	= 
	2\pi|c_1|^2\int_0^1 H(r)\dd r 
	+ 2\pi\int_{0}^1\sum_{n\geq2}
	n^2|c_n|^2 H(r) r^{2n-2}\dd r.
	\end{equation}
	First, we handle the term
	\[
	\cI := \int_{0}^1\sum_{n\geq2}
	n^2|c_n|^2 H(r) r^{2n-2} \dd r.
	\]
	Remark that as $n^2|c_n|^2 r^{2n-2} \geq 0$ for all $r\in(0,1)$ we have
	\[
	\cI = \sum_{n\geq2}n^2|c_n|^2 
	\left(
	\int_0^1 H(r) f_n(r)\dd r\right), 
	\quad \text{with } f_n(r) = r^{2n-2}.
	\]
	Now, as for all $n\geq 2$, $f_n$ is increasing on $(0,1)$ as well as $H$ by Claim A, applying Chebyshev's inequality we get
	\[
	\begin{aligned}
	\cI 
	&\geq \left(\sum_{n\geq 2} n^2|c_n|^2 \int_0^1 f_n(r) \dd r\right)\left(\int_0^1 H(r)\dd r\right)\\ 
	&= 
	\left(\sum_{n\geq 2}\frac{n^2}{2n-1} |c_n|^2\right)\left(\int_0^1 H(r)\dd r\right)
	\geq 
	\frac12 \left(\sum_{n\geq2} n |c_n|^2\right) 
	\left(\int_0^1 H(r)\dd r\right).
	\end{aligned}
	\]
	Note that the above inequality is strict unless $c_n = 0$ for all 
	$n \ge 2$, which occurs if, and only if, $\Omg$ is a disk centred at the origin.
	Using the area formula of Proposition~\ref{prop:areaform} this inequality turns into
	\begin{equation}\label{eqn:lbI2}
	\cI \geq \frac{|\Omega| - \pi|c_1|^2}{2\pi}\int_0^1 H(r)\dd r.
	\end{equation}
	Plugging~\eqref{eqn:lbI2} into \eqref{eqn:splitD}
	and applying then Proposition~\ref{prop:derradi}, we get
	\begin{equation}\label{eqn:lbD}
	\cD \geq \left(|\Omega| + \pi |c_1|^2\right)\int_0^1 H(r)\dd r
	\geq
	\left(|\Omega| + \pi r_\rmi^2\right)\int_0^1 H(r)\dd r.
	\end{equation}
	Again we stress that the above inequality is strict unless $\Omg $ is disk centred at the origin.
	Thus, it only remains to show Claim A.
	Differentiating the function $H$
	and using the identities
	\[
		J_0'(x) = -J_1(x),\qquad 
		J_1'(x) = \frac12(J_0(x) - J_2(x)), 
		\qquad x(J_0(x) + J_2(x)) = 2J_1(x),
	\]	
	we get 
	\begin{equation*}\label{key}
		\begin{aligned}
		H'(r) 
		& =
		J_0(r\mu)^2 + J_1(r\mu)^2 
		-
		r\mu
		\left[J_0(r\mu)J_1(r\mu)   + J_1(r\mu)J_2(r\mu) \right] \\
		& 
		= J_0(r\mu)^2 + J_1(r\mu)^2 
		-
		2 J_1(r\mu)^2  =
		J_0(r\mu)^2 - J_1(r\mu)^2.
		\end{aligned}
		\end{equation*}
		Taking into account that 
		$J_0(s) > J_1(s)$ for all $s \in (0, \mu)$ we get the claim.
	
	\paragraph{\textbf{The numerator $\cN_2$}} Recall that
	\[
	\cN_2 = \left(\int_0^1\frac{J_1(r\mu)^2}{r} \dd r\right)\left(\int_0^{2\pi}\kappa^2\big(\eta(\theta)\big)|\eta'(\theta)|^2\dd \theta\right).
	\]
	By definition, for all $\theta\in (0,2\pi)$ we have $\kappa^2\big(\eta(\theta)\big) \leq \kappa_\star^2$ and moreover we get  $|\eta'(\theta)| =  |f'(e^{i\theta})|$.
	It yields
	\begin{equation}\label{eqn:ubN2}
	\cN_2 \leq 2\pi\kp_\star^2 \|f'\|_{\cH^2(\D)}^2 \left(\int_0^1\frac{J_1(r\mu)^2}{r} \dd r\right).
	\end{equation}
	\paragraph{\textbf{Combining all the estimates together.}}
	Thanks to Proposition~\ref{prop:transplant} we know that
	\[
	\big(\mu_1(\Omega)\big)^2 \leq \frac{\cN_1 + \cN_2 + \cN_3}{\cD}.
	\]
	Using~\eqref{eqn:lbD}, \eqref{eqn:ubN2} as well as the explicit expressions for $\cN_1$ and $\cN_3$ we obtain
	\[
	\big(\mu_1(\Omega)\big)^2 \leq 	
	\frac{2\pi\max\left\{1, \kp_\star^2 \|f'\|_{\cH^2(\D)}^2\right\} }{|\Omega| +\pi r_\rmi^2}\big(\mu_1(\D)\big)^2.
	\]
	Cauchy-Schwarz inequality
	and the total curvature identity yield
	\[
	\begin{aligned}
		\kp_\star^2 \|f'\|_{\cH^2(\D)}^2 
		&\ge
		\frac{1}{2\pi}
		\int_0^{2\pi} \kp(f(e^{\ii\tt}))^2|f'(e^{\ii\tt})|^2\dd \tt\\
		& \ge
		\frac{1}{4\pi^2}
		\left(
		\int_0^{2\pi} \kp(f(e^{\ii\tt}))|f'(e^{\ii\tt})|\dd \tt\right)^2
		= 1.
	\end{aligned}
	\]
	Hence, we end up with
	\[
		\big(\mu_1(\Omega)\big)^2 \leq 	
		\frac{2\pi
		\kp_\star^2\|f'\|_{\cH^2(\D)}^2}{|\Omega| +\pi r_\rmi^2}\big(\mu_1(\D)\big)^2.
	\]
	By taking the square root on both hand
	sides of the previous equation we get the claim.
	Note that the above inequality is strict unless $\Omg$
	is a disk centred at the origin.
\end{proof}
\subsubsection{Bounds for convex and for nearly circular domains}
Now we use available estimates on 
$\|f'\|_{\cH^2(\D)}$ to derive geometric	
bounds on $\mu_1(\Omg)$.	
First, we define the functional $\cF_\rmc$ that appears in~\eqref{eq:main_ineq}: 
\begin{equation}\label{eqn:functionalmainth}
	\cF_\rmc(\Omega) := 
	\left(
	\frac{|\Omega| + \pi r_\rmi^2}{2\pi}	
	\right)^{\frac12}
	\exp\big(-2(r_\rmo - r_\rmc) 
	\Phi(r_\rmi,r_\rmc)\big),
\end{equation}
where $\Phi$ is as in~\eqref{eqn:defPhi}
and the radii $r_\rmi$, $r_\rmo$ and $r_\rmc$ are given in~\eqref{eqn:definnoutradius}.
In particular, when $r_\rmi \neq r_\rmc$ the functional $\cF_\rmc$ simply rewrites as
\[
	\cF_\rmc(\Omega) = \left(
	\frac{|\Omega| + \pi r_\rmi^2}{2\pi}	
	\right)^{\frac12} \left(\frac{r_\rmc}{r_\rmi}\right)^{2 \frac{r_\rmo - r_\rmc}{r_\rmi - r_\rmc}}.
\]
Remark that $\Phi(a,b) \ge 0$ for any $a,b\in\R_+$. Furthermore, the functional $\cF_\rmc$  has the following 
properties.
\begin{enumerate}[label=(\alph*)] 
	\item\label{itm:pteF1} For any $\Omega$ and all $\alpha > 0$ one has $\cF_\rmc(\alpha \Omega) = \alpha \cF_\rmc(\Omega)$.
	\item\label{itm:pteF2} 
	One has for any $\Omega$
	\[
	\cF_\rmc(\Omega)\le 	\left(
	\frac{|\Omega| + \pi r_\rmi^2}{2\pi}	
	\right)^{\frac12}\le \sqrt{\frac{|\Omg|}{\pi}}
	\]
	and, in particular, $\cF_\rmc(\D_r) =  \sqrt{\frac{|\D_r|}{\pi}} = r$.
	\item\label{itm:pteF3} 
	$\cF_\rmc$ is not invariant under translations. Indeed, for $\Omega = \D$, we have $r_\rmo = r_\rmi = r_\rmc = 1$ and $\cF_\rmc(\Omega) = 1$. 
	However, if one picks $\Omega = \D + (\frac12,0)$,
	then one has
	$r_\rmi = \frac12$, $r_\rmo = \frac32$,
	$r_\rmc = 1$
	and
	\[
		\cF_\rmc(\Omega) = 
		\frac18\left(\frac{5}{2}\right)^{\frac12}
		\simeq 0.198.
	\]
\end{enumerate}

Now, we have all the tools to rigorously formulate our main result for convex domains. 
This result is just a simple consequence of
Theorem~\ref{thm:mainthrig0}, in which $\|f'\|_{\cH^2(\D)}$ is estimated via Proposition~\ref{prop:kbound} and the scaling property
$r\mu_1(\D_r) = \mu_1(\D)$ is employed.
\begin{thm}\label{thm:mainthrig} 
Let $\Omega \subset \R^2$ be a bounded, convex, $C^3$-domain 
such that $\bo\in\Omg$ and let the functional $\cF_\rmc(\cdot)$
be as in~\eqref{eqn:functionalmainth}.
Then the following inequality holds
\[
	\cF_\rmc(\Omega)\mu_1(\Omega) \leq \cF_\rmc(\D_r)\mu_1(\D_r),
\]
where
$\mu_1(\Omg)$ and $\mu_1(\D_r)$ are  the principal
eigenvalues of the massless Dirac operators $\Op$
and $\sfD_{\D_r}$, $r>0$, respectively.
Moreover, the above inequality is strict
unless $\Omg$ is a disk centred at the origin.
\end{thm}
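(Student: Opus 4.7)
The plan is to combine three ingredients that are all now in hand: the abstract upper bound in Theorem~\ref{thm:mainthrig0}, Kovalev's geometric estimate of the Hardy-space norm $\|f'\|_{\cH^2(\D)}$ from Proposition~\ref{prop:kbound} (which requires convexity, and is where we use the hypothesis on $\Omg$), and the simple scaling law for the disk eigenvalue.

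First, I would invoke Theorem~\ref{thm:mainthrig0} applied to $\Omg$. Since $\Omg$ is convex and $C^3$ with $\bo\in\Omg$, any conformal map $f\colon\D\to\Omg$ with $f(0)=0$ satisfies Kovalev's bound
\[
\|f'\|_{\cH^2(\D)}\le r_\rmc\exp\!\big(2(r_\rmo-r_\rmc)\Phi(r_\rmi,r_\rmc)\big).
\]
Recalling the defining identity $\kp_\star r_\rmc=1$ from~\eqref{eqn:defminradicurv}, substitution into the conclusion of Theorem~\ref{thm:mainthrig0} gives
\[
\mu_1(\Omg)\le\left(\frac{2\pi}{|\Omg|+\pi r_\rmi^2}\right)^{\!1/2}\exp\!\big(2(r_\rmo-r_\rmc)\Phi(r_\rmi,r_\rmc)\big)\,\mu_1(\D).
\]
Here the factor $r_\rmc$ from Kovalev's bound cancels against $\kp_\star=1/r_\rmc$, which is the key algebraic simplification that makes the bound clean.

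Next I would recognise the right-hand side as $\cF_\rmc(\Omg)^{-1}\mu_1(\D)$: indeed, by the definition~\eqref{eqn:functionalmainth},
\[
\cF_\rmc(\Omg)=\left(\frac{|\Omg|+\pi r_\rmi^2}{2\pi}\right)^{\!1/2}\exp\!\big(-2(r_\rmo-r_\rmc)\Phi(r_\rmi,r_\rmc)\big),
\]
so the previous display reads $\cF_\rmc(\Omg)\mu_1(\Omg)\le\mu_1(\D)$. On the other hand, the scaling $\sfD_{\D_r}=r^{-1}\sfD_\D$ (up to the obvious unitary $u\mapsto u(r\,\cdot)$) gives $\mu_1(\D_r)=r^{-1}\mu_1(\D)$, while property~\ref{itm:pteF2} of $\cF_\rmc$ noted just before the theorem yields $\cF_\rmc(\D_r)=r$. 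Hence $\cF_\rmc(\D_r)\mu_1(\D_r)=\mu_1(\D)$, and the desired inequality follows.

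The only subtle point — and the one I would want to check carefully — is the strictness claim. The equality case in Theorem~\ref{thm:mainthrig0} was already shown to force $\Omg$ to be a disk centred at the origin (it came from the Chebyshev inequality applied in the analysis of the denominator $\cD$, which is strict unless all Taylor coefficients $c_n$ of $f$ with $n\ge 2$ vanish). Since the two subsequent estimates (Kovalev and $\kp_\star r_\rmc = 1$) are applied as inequalities that degenerate to equalities on a disk, no strictness is lost by this chain; thus $\cF_\rmc(\Omg)\mu_1(\Omg)<\cF_\rmc(\D_r)\mu_1(\D_r)$ whenever $\Omg\ne\D_{r'}$ for any $r'>0$. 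There is no genuine obstacle in the argument — Theorem~\ref{thm:mainthrig0} carries all the analytic content, and the present theorem is essentially a geometric bookkeeping step packaging Kovalev's estimate into the functional $\cF_\rmc$.
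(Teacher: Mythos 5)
Your argument is exactly the paper's: Theorem~\ref{thm:mainthrig0} combined with Kovalev's bound from Proposition~\ref{prop:kbound} (the factor $r_\rmc$ cancelling against $\kp_\star=1/r_\rmc$), the identification of the resulting prefactor with $\cF_\rmc(\Omg)^{-1}$, and the scaling identities $\cF_\rmc(\D_r)=r$, $r\mu_1(\D_r)=\mu_1(\D)$. The strictness discussion is also handled as in the paper (it is inherited from the equality case of Theorem~\ref{thm:mainthrig0}, the remaining estimates being equalities for centred disks), so the proposal is correct and essentially identical to the paper's proof.
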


\begin{rem}
Condition~\ref{itm:pteF1} implies that the family
\[	
	\cE_\rmc(r) := \big\{\Omega\text{ is a bounded, convex  $C^3$-domain} \colon \cF_\rmc(\Omega) = r\big\}, \quad r >0.
\] 
is non-empty and contains ``many'' domains.
\end{rem}

\begin{corol}
Let the assumptions be as in Theorem~\ref{thm:mainthrig}.
Then the following inequality
\[
	\mu_1(\Omega) < \mu_1(\D_r)
\]
holds provided that $\cF_\rmc(\Omega) = r$
and that $\Omg\ne\D_r$.
\end{corol}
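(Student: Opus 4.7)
The plan is to deduce this corollary as an immediate consequence of Theorem~\ref{thm:mainthrig}. Applying that theorem with the same $r$ as in the statement yields
\[
	\cF_\rmc(\Omega)\mu_1(\Omega) \leq \cF_\rmc(\D_r)\mu_1(\D_r),
\]
with the inequality being strict unless $\Omega$ is a disk centred at the origin. By hypothesis $\cF_\rmc(\Omega) = r$, and by property~\ref{itm:pteF2} of the functional one has $\cF_\rmc(\D_r) = r$. Dividing both sides by $r > 0$ gives at once $\mu_1(\Omega) \leq \mu_1(\D_r)$, so the entire question reduces to promoting this weak inequality to a strict one.

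The promotion step is where I would spend essentially all the work, though it is really just bookkeeping. The strict inequality in Theorem~\ref{thm:mainthrig} fails only when $\Omega = \D_{r'}$ for some $r' > 0$. I would argue by contradiction: if such an $r'$ existed, then property~\ref{itm:pteF2} would give $\cF_\rmc(\Omega) = \cF_\rmc(\D_{r'}) = r'$, and combining this with the assumption $\cF_\rmc(\Omega) = r$ forces $r' = r$, i.e.\ $\Omega = \D_r$, contradicting the hypothesis $\Omega \ne \D_r$. Hence $\Omega$ is not any disk centred at the origin, the strict form of Theorem~\ref{thm:mainthrig} applies, and one obtains $\mu_1(\Omega) < \mu_1(\D_r)$.

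There is no genuine obstacle here; the corollary is a direct unpacking of the normalization $\cF_\rmc(\D_r) = r$ together with the strict-inequality clause already established in Theorem~\ref{thm:mainthrig}. The only subtle point worth flagging, and the reason one cannot simply invoke the weak inequality, is that ``$\Omega$ is a disk centred at the origin'' is \emph{a priori} a weaker condition than ``$\Omega = \D_r$'', so one must exclude disks of other radii explicitly using the constraint $\cF_\rmc(\Omega) = r$.
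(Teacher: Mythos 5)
Your argument is correct and is exactly how the paper intends the corollary to follow: apply Theorem~\ref{thm:mainthrig}, use the normalization $\cF_\rmc(\Omega)=r=\cF_\rmc(\D_r)$, and rule out $\Omega$ being a disk of any radius via $\cF_\rmc(\D_{r'})=r'$ together with $\Omega\ne\D_r$, so the strict-inequality clause of the theorem applies. The paper states this as an immediate consequence without proof, and your bookkeeping of the strictness step is the right (and only) point that needs attention.
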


Note that thanks to property~\ref{itm:pteF3} we know that $\cF_\rmc$ is sensitive to the choice of the origin. Stated as it is, Theorem \ref{thm:mainthrig} can still be slightly optimized, because the principal eigenvalue
itself is clearly insensitive to translations
of $\Omg$.
Thanks to~\ref{itm:pteF2}, we have 
\[
	\cF_\rmc(\Omega -y) \leq \sqrt{\frac{|\Omega -y|}{\pi}} = \sqrt{\frac{|\Omega|}{\pi}}
\] 
and hence Theorem~\ref{thm:mainthrig} immediately yields the following corollary.  

\begin{corol}\label{cor:FKopt}
Let the assumptions be as in Theorem~\ref{thm:mainthrig}. 
Then the following inequality holds
\[
	\mu_1(\Omega) \leq \frac{r}{\cF_\rmc^\star(\Omg)}\mu_1(\D_r),
\]
where $\cF_\rmc^\star(\Omega) := 
\sup_{y\in\Omega} \cF_\rmc(\Omega - y)$.
\end{corol}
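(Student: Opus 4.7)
The plan is to exploit the translation invariance of $\spe{\Op}$, which the geometric functional $\cF_\rmc$ fails to satisfy (as illustrated in property~\ref{itm:pteF3}), and to optimize the bound of Theorem~\ref{thm:mainthrig} over all admissible translations of $\Omg$.

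First, I would verify that $\mu_1(\Omg - y) = \mu_1(\Omg)$ for every $y \in \R^2$. This is a routine consequence of the unitary equivalence $\Op \simeq \sfD_{\Omg - y}$ implemented by the translation operator $(U_y u)(x) := u(x + y)$, which maps $L^2(\Omg - y, \C^2)$ isometrically onto $L^2(\Omg, \C^2)$, sends $H^1(\Omg - y, \C^2)$ onto $H^1(\Omg, \C^2)$, and preserves the infinite mass boundary condition since the outer normal field of $\Omg - y$ at a point $x - y$ coincides with that of $\Omg$ at $x$.

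Next, for every $y \in \Omg$, the translated domain $\Omg - y$ contains the origin and remains bounded, convex and $C^3$-smooth. Hence Theorem~\ref{thm:mainthrig} applies to $\Omg - y$ and yields
\[
	\cF_\rmc(\Omg - y)\,\mu_1(\Omg - y) \le \cF_\rmc(\D_r)\,\mu_1(\D_r) = r\,\mu_1(\D_r).
\]
Combining this with the translation invariance of $\mu_1$ established above gives
\[
	\mu_1(\Omg) \le \frac{r\,\mu_1(\D_r)}{\cF_\rmc(\Omg - y)}, \qquad y \in \Omg.
\]

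Finally, since the left-hand side is independent of $y$, I would take the infimum of the right-hand side over $y \in \Omg$. This amounts to replacing $\cF_\rmc(\Omg - y)$ in the denominator by $\sup_{y\in\Omg}\cF_\rmc(\Omg - y) = \cF_\rmc^\star(\Omg)$, producing exactly the claimed inequality. There is no serious obstacle: the corollary is a one-line optimization of Theorem~\ref{thm:mainthrig} over translations, the only nontrivial ingredient being the (elementary) translation invariance of the infinite mass Dirac spectrum.
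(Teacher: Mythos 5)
Your proposal is correct and is essentially the paper's own argument: the corollary is obtained by applying Theorem~\ref{thm:mainthrig} to each translate $\Omega-y$ with $y\in\Omega$ (which still contains the origin and is bounded, convex, $C^3$), invoking the translation invariance of $\mu_1$, and optimizing over $y$, which replaces $\cF_\rmc(\Omega-y)$ by $\cF_\rmc^\star(\Omega)$. The paper treats the invariance of the principal eigenvalue under translations as obvious, whereas you spell out the unitary equivalence; this is only a matter of detail, not of approach.
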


Stated this way, the upper bound in the right hand side of the inequality in Corollary \ref{cor:FKopt} is translation invariant. However, the upper bound is no longer expressed in term of simple geometric quantities. Nevertheless, if the domain $\Omega$ has some extra symmetries, one can find explicitly $y_\star\in\Omega$, which maximizes the function $y\mapsto \cF_\rmc(\Omega - y)$. This is the purpose of the following proposition, whose proof is postponed
to Appendix~\ref{app:prop}.

\begin{prop}\label{prop:symm} 
	Let $\Omega$ be a bounded, convex $C^3$-domain,
	which	has two axes of symmetry $\Lambda_1$ and $\Lambda_2$ that intersect in a unique point $y_\Lambda\in\Omg$, then $\cF_\rmc(\Omega - y_\Lambda) = \cF_\rmc^\star(\Omega)$.
\end{prop}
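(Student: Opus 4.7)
The plan is to identify $y_\Lambda$ as the joint optimizer of the two $y$-dependent ingredients entering $\cF_\rmc(\Omega-y)$, namely
\[
r_\rmi(y):=\min_{x\in\partial\Omega}|x-y|\qquad\text{and}\qquad r_\rmo(y):=\max_{x\in\partial\Omega}|x-y|,
\]
and then to verify that $\cF_\rmc$ depends on $(r_\rmi,r_\rmo)$ monotonically in the directions compatible with this joint optimality.

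For the geometric step, I would first observe that on the convex set $\Omega$ the function $r_\rmi$ coincides with $\operatorname{dist}(\cdot,\partial\Omega)$ and admits the representation $r_\rmi(y)=\inf_{H}\operatorname{dist}(y,\partial H)$ over the supporting half-planes $H\supset\Omega$, exhibiting it as an infimum of affine functions and hence concave. Dually, $r_\rmo$ is a supremum of the convex functions $y\mapsto|x-y|$, hence convex. The level sets
\[
S_\rmi:=\bigl\{y\in\Omega:r_\rmi(y)=\max_{y'\in\Omega}r_\rmi(y')\bigr\},\qquad
S_\rmo:=\bigl\{y\in\Omega:r_\rmo(y)=\min_{y'\in\Omega}r_\rmo(y')\bigr\}
\]
are therefore non-empty, compact and convex. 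Since the reflections $R_1,R_2$ across the axes $\Lambda_1,\Lambda_2$ satisfy $R_j(\partial\Omega)=\partial\Omega$, one gets $r_\rmi\circ R_j=r_\rmi$ and $r_\rmo\circ R_j=r_\rmo$, so both $S_\rmi$ and $S_\rmo$ are invariant under $R_1$ and $R_2$. Because the centroid of a set is equivariant under isometries, the centroid of any non-empty compact convex set left invariant by both $R_j$ is a common fixed point of $R_1$ and $R_2$ and therefore equals $y_\Lambda$. Since a compact convex set contains its centroid, this forces $y_\Lambda\in S_\rmi\cap S_\rmo$, so that
\[
r_\rmi(y)\le r_\rmi(y_\Lambda)\qquad\text{and}\qquad r_\rmo(y)\ge r_\rmo(y_\Lambda)\qquad\text{for every }y\in\Omega.
\]

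For the monotonicity step, I view $\cF_\rmc(\Omega-y)$ through~\eqref{eqn:functionalmainth} as a function of $(r_\rmi,r_\rmo)$ with $|\Omega|$ and $r_\rmc$ held fixed (they do not depend on $y$). The prefactor $\sqrt{(|\Omega|+\pi r_\rmi^2)/(2\pi)}$ is manifestly increasing in $r_\rmi$. From the uniform integral representation
\[
\Phi(a,b)=\int_0^1\frac{dt}{(1-t)a+tb},
\]
which reproduces~\eqref{eqn:defPhi} in both cases $a\ne b$ and $a=b$, one sees that $a\mapsto\Phi(a,r_\rmc)$ is strictly decreasing and positive, while $r_\rmo-r_\rmc\ge 0$ by the inequality $r_\rmo\ge r_\rmc$ recalled after~\eqref{eqn:definnoutradius}. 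Therefore $(r_\rmo-r_\rmc)\Phi(r_\rmi,r_\rmc)$ decreases in $r_\rmi$ and increases in $r_\rmo$, so the exponential factor $\exp(-2(r_\rmo-r_\rmc)\Phi(r_\rmi,r_\rmc))$ is increasing in $r_\rmi$ and decreasing in $r_\rmo$. Feeding the inequalities of the previous paragraph into this monotonicity gives $\cF_\rmc(\Omega-y)\le\cF_\rmc(\Omega-y_\Lambda)$ for every $y\in\Omega$, which is the desired identity $\cF_\rmc^\star(\Omega)=\cF_\rmc(\Omega-y_\Lambda)$.

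The main subtlety I anticipate is precisely the invariance/centroid argument: one is tempted to shortcut it by appealing to uniqueness of an incentre or of a Chebyshev centre of $\Omega$, but already elongated rectangles show that $S_\rmi$ and $S_\rmo$ can be non-trivial segments, so the lemma \emph{the centroid of an invariant compact convex set is a fixed point of every symmetry} is the robust tool that bypasses any such uniqueness claim. Once that is in place, the monotonicity check is essentially a one-line consequence of the integral representation of $\Phi$.
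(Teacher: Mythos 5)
Your proof is correct, and it shares the paper's overall skeleton: show that $y_\Lambda$ simultaneously maximizes $r_\rmi(\cdot)$ and minimizes $r_\rmo(\cdot)$, then conclude via the monotonicity of $\cF_\rmc$ viewed as a function $\cG(r_1,r_2)$ of the two radii with $|\Omega|$ and $r_\rmc$ fixed. The implementation of the geometric step, however, is genuinely different. The paper argues by hand with a single axis: it reflects an optimizer across $\Lambda$, takes the midpoint, and uses convexity of $\Omega$ through Minkowski half-sums of inscribed/circumscribed disks to see that the midpoint is again an optimizer; the passage from "there is an optimizer on each axis'' to "the intersection point $y_\Lambda$ is an optimizer'' is then stated rather tersely in its Step 3. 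You instead exploit that $r_\rmi$ is concave (infimum of the affine distances to supporting half-planes) and $r_\rmo$ is convex (supremum of convex functions), so the sets of maximizers/minimizers are compact and convex, are invariant under both reflections, and must contain their centroid, which is the unique common fixed point $y_\Lambda$. This buys a cleaner and arguably more complete justification of exactly the point the paper glosses over (your centroid lemma, or equivalently the Chebyshev-centre of the invariant convex set, bypasses any uniqueness-of-incentre issue, as you note), at the cost of invoking slightly less elementary tools than the paper's bare-hands disk argument; conversely, the paper's midpoint trick needs only convexity of $\Omega$ itself. You also prove the monotonicity of $\cG$ via the integral representation $\Phi(a,b)=\int_0^1\big((1-t)a+tb\big)^{-1}\dd t$, which the paper merely asserts; this is a nice self-contained verification. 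One cosmetic point: define the optimizer sets $S_\rmi,S_\rmo$ over $\ov\Omega$ (as the paper does in its Step 1) rather than over the open set $\Omega$, since the minimum of $r_\rmo$ could a priori sit on $\partial\Omega$; with that adjustment compactness and attainment are immediate, and since $y_\Lambda\in\Omg$ is interior the final inequalities $r_\rmi(y)\le r_\rmi(y_\Lambda)$, $r_\rmo(y)\ge r_\rmo(y_\Lambda)$ for all $y\in\Omega$ follow exactly as you state.
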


Proposition~\ref{prop:symm} immediately yields the optimal bound that one can obtain in Corollary~\ref{cor:FKopt} whenever $\Omega$ has two axes of symmetry. 
For example, let $0<b<a$ and take for $\Omega$ the ellipse of major axis $2a$ and minor axis $2b$ defined as
\[	
	\Omega := \left\{ (x_1,x_2)^\top\in\R^2 \colon \frac{x^2_1}{a^2} + \frac{x^2_2}{b^2} \leq 1\right\}.
\] 
One easily finds $r_\rmc =a^{-1} b^2$ and by Proposition~\ref{prop:symm} the optimal choice of $y\in\Omega$ to minimize $\cF(\Omega -y)$ is given by $y= \bo$.
Hence, $r_\rmi = b$ and $r_\rmo = a$ and we obtain
\[
	\cF_\rmc^\star(\Omg) = \sup_{y\in\Omega}\cF_\rmc(\Omega - y) 
	= 
	\cF_\rmc(\Omega) 
	= 
	\left(\frac{ab+b^2}{2}\right)^{\frac12} \exp\left(-2\Phi(b,a^{-1}b^2)\frac{a^2-b^2}{a}\right).
\]
Remark that as $a>b>0$ we have $\cF_\rmc^\star(\Omega) < \sqrt{ab}$. 
\begin{rem}	
We also observe that for the ellipse 
$\Omg_x\subset\R^2$ centred at the origin
with $a = 1 + x$ and $b = \frac{1}{1+x}$
for some $x > 0$
one has
\[
	\cF_\rmc(\Omg_x) = \left(\frac{2+2x+x^2}{2+4x+2x^2}\right)^{1/2}
	\left(\frac{1}{1+x}\right)^{8+8x+4x^2} = 1 - \frac{17}{2}x + O(x^2),\qquad x\arr 0^+.
\]
Thus, the upper bound in Theorem~\ref{thm:mainthrig} is reasonably precise if $x > 0$ is small, in which
case the ellipse $\Omg_x$ is close to the unit disk.
On the other hand, $\cF_\rmc(\Omg_x)$ decays super-exponentially for $x \arr \infty$ and in that
regime  the obtained upper bound on $\mu_1(\Omg)$ is very rough.
\end{rem}

In what follows we assume that $\Omg$ is a nearly circular domain in the sense of Definition~\ref{dfn:nc}.
Now, we define the functional that 
appears in~\eqref{eq:main_ineq1}:
\begin{equation}\label{eqn:functionalmainth2}
	\cF_\rms(\Omega) := 
	\left(
	\frac{|\Omega| + \pi r_\rmi^2}{2\pi}	
	\right)^{\frac12}
	\frac{r_\rmc}{r_\rmo}
	\left(\frac{1-\rho_\star}{1+\rho_\star}\right)^{1/2}.
\end{equation}
The functional $\cF_\rms$  shares common properties with $\cF_\rmc$.
\begin{enumerate}[label=(\alph*)] 
	\item\label{itm:pteFs1} 
	For any nearly circular $\Omega$ and all $\alpha > 0$ one has $\cF_\rms(\alpha \Omega) = \alpha \cF_\rms(\Omega)$.
	\item\label{itm:pteFs2} 
	One has for any nearly circular $\Omega$
	\[
	\cF_\rms(\Omega)\le 	\left(
	\frac{|\Omega| + \pi r_\rmi^2}{2\pi}	
	\right)^{\frac12}\le \sqrt{\frac{|\Omg|}{\pi}}
	\]
	and, in particular, $\cF_\rms(\D_r) =  \sqrt{\frac{|\D_r|}{\pi}} = r$.
	\item\label{itm:pteFs3} 
	$\cF_\rms$ is also not invariant under translations.
\end{enumerate}

Now, we have all the tools to rigorously formulate our main result for nearly circular domains. 
	This result is also a simple consequence of
	Theorem~\ref{thm:mainthrig0}, in which $\|f'\|_{\cH^2(\D)}$ is now estimated via Proposition~\ref{prop:gbound}.
\begin{thm}\label{thm:mainthrig2} 
	Let $\Omega\subset\R^2$ be a bounded, $C^3$ and
	nearly circular domain in the sense of Definition~\ref{dfn:nc} with $\rho_\star \in [0,1)$ and let the functional $\cF_\rms(\cdot)$
	be as in~\eqref{eqn:functionalmainth2}.
	Then the following inequality holds
	\[
	\cF_\rms(\Omega)\mu_1(\Omega) \leq \cF_\rms(\D_r)\mu_1(\D_r),
	\]
	where
	$\mu_1(\Omg)$ and $\mu_1(\D_r)$ are  the principal
	eigenvalues of the massless Dirac operators $\Op$
	and $\sfD_{\D_r}$, $r>0$, respectively.
	Moreover, the above inequality is strict
	unless $\Omg$ is a disk centred at the origin.
\end{thm}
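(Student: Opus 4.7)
The plan is to deduce Theorem~\ref{thm:mainthrig2} as a direct specialization of the abstract upper bound in Theorem~\ref{thm:mainthrig0}, substituting Gaier's estimate (Proposition~\ref{prop:gbound}) for the norm $\|f'\|_{\cH^2(\D)}$ in place of Kovalev's convex estimate. Starting from
\[
	\mu_1(\Omega) \le \left(\frac{2\pi}{|\Omega|+\pi r_\rmi^2}\right)^{1/2} \kappa_\star\,\|f'\|_{\cH^2(\D)}\,\mu_1(\D),
\]
and using $\kappa_\star = r_\rmc^{-1}$ from~\eqref{eqn:defminradicurv} together with Proposition~\ref{prop:gbound}, I would obtain
\[
	\mu_1(\Omega) \le \left(\frac{2\pi}{|\Omega|+\pi r_\rmi^2}\right)^{1/2} \frac{r_\rmo}{r_\rmc}\left(\frac{1+\rho_\star^2}{1-\rho_\star^2}\right)^{1/2}\mu_1(\D).
\]

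Rearranging this estimate puts the geometric prefactor
\[
	\left(\frac{|\Omega|+\pi r_\rmi^2}{2\pi}\right)^{1/2}\frac{r_\rmc}{r_\rmo}\left(\frac{1-\rho_\star^2}{1+\rho_\star^2}\right)^{1/2}
\]
in front of $\mu_1(\Omega)$ on the left and leaves $\mu_1(\D)$ on the right. The only mismatch with the definition~\eqref{eqn:functionalmainth2} of $\cF_\rms$ is that the latter uses the cleaner expression $(1-\rho_\star)/(1+\rho_\star)$ in the $\rho_\star$-dependent slot. To bridge the gap I would invoke the elementary inequality $(1+\rho_\star)^2\ge 1+\rho_\star^2$, valid for $\rho_\star\in[0,1)$, which rearranges to $\frac{1-\rho_\star}{1+\rho_\star}\le \frac{1-\rho_\star^2}{1+\rho_\star^2}$; since $\mu_1(\Omega)>0$ by Proposition~\ref{prop:dirbasic}\,(iii), this legitimates replacing the stronger factor on the left by the smaller one appearing in $\cF_\rms$ without breaking the inequality.

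To close the argument I would identify the right-hand side $\mu_1(\D)$ with $\cF_\rms(\D_r)\mu_1(\D_r)$: the scaling relation $\mu_1(\D_r)=\mu_1(\D)/r$ is immediate from the explicit first eigenfunction provided by Proposition~\ref{prop:disk}, while $\cF_\rms(\D_r)=r$ is a one-line check using $r_\rmi=r_\rmo=r_\rmc=r$ and $\rho_\star=0$ on a disk. Strictness unless $\Omega$ is a disk centred at the origin is inherited verbatim from the corresponding strict statement in Theorem~\ref{thm:mainthrig0}. There is no genuine obstacle in this argument, as the hard analytic work is absorbed into Theorem~\ref{thm:mainthrig0} and Proposition~\ref{prop:gbound}; the only delicate bookkeeping is the cosmetic simplification of the $\rho_\star$-dependent factor mentioned above.
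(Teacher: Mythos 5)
Your proposal is correct and follows essentially the same route as the paper, which obtains Theorem~\ref{thm:mainthrig2} by inserting Gaier's estimate (Proposition~\ref{prop:gbound}) into the abstract bound of Theorem~\ref{thm:mainthrig0} and then using the scaling relation $r\,\mu_1(\D_r)=\mu_1(\D)$ together with $\cF_\rms(\D_r)=r$, with strictness inherited from Theorem~\ref{thm:mainthrig0}. Your extra elementary step verifying $\frac{1-\rho_\star}{1+\rho_\star}\le\frac{1-\rho_\star^2}{1+\rho_\star^2}$ merely makes explicit the harmless weakening from the Gaier factor to the factor appearing in the definition~\eqref{eqn:functionalmainth2} of $\cF_\rms$, a detail the paper leaves implicit.
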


\begin{rem}
Condition~\ref{itm:pteFs1} implies that the family
\[	
	\cE_\rms(r) := \big\{\Omega\text{ is a bounded, nearly circular  $C^3$-domain} \colon \cF_\rms(\Omega) = r\big\}, \quad r >0.
\] 
is non-empty and contains ``many'' domains.
\end{rem}

\begin{corol}
Let the assumptions be as in Theorem~\ref{thm:mainthrig2}.
Then the following inequality
\[
	\mu_1(\Omega) < \mu_1(\D_r)
\]
holds provided that $\cF_\rms(\Omega) = r$
and that $\Omg\ne\D_r$.
\end{corol}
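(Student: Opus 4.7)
The plan is to derive this corollary as a direct consequence of Theorem~\ref{thm:mainthrig2}, exploiting the normalization property $\cF_\rms(\D_r) = r$ recorded in item~\ref{itm:pteFs2}. First, I would apply Theorem~\ref{thm:mainthrig2} to the given nearly circular domain $\Omg$ to obtain
\[
	\cF_\rms(\Omega)\mu_1(\Omega) \leq \cF_\rms(\D_r)\mu_1(\D_r).
\]
Substituting the hypothesis $\cF_\rms(\Omega) = r$ together with $\cF_\rms(\D_r) = r$ on the right-hand side, I can cancel the common factor $r > 0$, yielding $\mu_1(\Omega) \leq \mu_1(\D_r)$.

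It then remains to upgrade this to a strict inequality, and for that I would invoke the strictness clause from Theorem~\ref{thm:mainthrig2}: equality forces $\Omg$ to be a disk centred at the origin, say $\Omg = \D_{r'}$ for some $r' > 0$. In that case, however, $\cF_\rms(\Omg) = \cF_\rms(\D_{r'}) = r'$ by~\ref{itm:pteFs2}, so $r' = r$ and hence $\Omg = \D_r$, contradicting the standing assumption $\Omg \ne \D_r$.

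There is no genuine obstacle here; the entire argument is a book-keeping reduction to the already-established Theorem~\ref{thm:mainthrig2}. The only point to be slightly careful about is that the strictness clause in Theorem~\ref{thm:mainthrig2} allows \emph{any} disk centred at the origin, not just $\D_r$ itself, which is why the short contradiction argument in the previous paragraph (using the normalization $\cF_\rms(\D_{r'}) = r'$) is required to close the loop.
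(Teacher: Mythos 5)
Your argument is correct and is exactly the reasoning the paper leaves implicit: the corollary is stated as an immediate consequence of Theorem~\ref{thm:mainthrig2} together with the normalization $\cF_\rms(\D_{r'}) = r'$, which is precisely how you close the strictness case $\Omg = \D_{r'}$. Nothing further is needed.
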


\section*{acknowledgements}
The authors are very grateful to Lo\"ic Le Treust, Konstantin Pankrashkin and Leonid Kovalev for fruitful discussions.

VL acknowledges the support by the grant No. 17-01706S of the Czech Science Foundation (GA\v{C}R) and by a public grant as part of the Investissement d'avenir project, reference ANR-11-LABX-0056-LMH, LabEx LMH. A large part of the work was done during two stays of VL at the University Paris-Sud in 2018.

TOB is supported by the ANR "D\'efi des autres savoirs (DS10) 2017" programm, reference ANR-17-CE29-0004, project molQED and by the PHC Barrande 40614XA funded by the French Ministry of Foreign Affairs and the French Ministry of Higher Education, Research and Innovation.
TOB is grateful for the stimulating research stay and the hospitality of the Nuclear Physics Institute of Czech Republic where this project has been initiated.

\appendix

%---------------------------------------------
\section{The massless Dirac operator with infinite mass boundary conditions on a disk}\label{app:disk}
%-----------------------------------------------
The goal of this appendix is to prove Proposition \ref{prop:disk}. Namely, we are aiming to characterize
the principal eigenvalue $\mu_\D$ and the associated eigenfunctions for the self-adjoint operator $\OpD$ on the unit disk 
\[
	\D = \{\bx\in\R^2\colon |\bx| < 1\}.
\]
The material of this appendix is essentially known (see for instance~\cite[App. D]{V17}).
However, we recall it here for the sake of completeness.

\subsection{The representation of the operator $\OpD$ in polar coordinates} 
First, we introduce the polar coordinates $(r,\tt)$
on the disk $\D$. They are related to the Cartesian
coordinates $\bx = (x_1,x_2)$ \emph{via} the identities
\[
	\bx(r,\tt) = 
	\begin{pmatrix}
	x_1(r,\tt)\\
	x_2(r,\tt)
	\end{pmatrix},\quad\text{where}\quad x_1 = x_1(r,\tt) = r\cos\tt,\quad
	x_2 = x_2(r,\tt) = r\sin\tt,
\]
for all $r \in \I := (0,1)$ and $\tt\in\T$.
Further, we consider the moving frame 
$(\be_{\rm rad}, \be_{\rm ang})$ associated with the polar coordinates
\[
	\be_{\rm rad}(\tt) 
	=
	\frac{\dd \bx}{\dd r}
	= 
	\begin{pmatrix}
		\cos\tt\\
		\sin\tt
	\end{pmatrix}
	\quad\text{and}\quad
	\be_{\rm ang}(\tt) 
	= 
	\frac{\dd \be_{\rm rad}}{\dd\tt}
	 = 	
	\begin{pmatrix}
	-\sin\tt\\
	\cos\tt
	\end{pmatrix}.
\]
The Hilbert space $L^2_\cyl(\D,\C^2) := L^2(\I\times\T,\C^2;r\dd r \dd \theta)$ can be viewed
as the tensor product $L^2_r(\I)\otimes L^2(\T,\C^2)$, where 
$L^2_r(\I) = L^2(\I;r\dd r)$.
Let us consider the unitary transform
\[
	V \colon L^2(\D,\C^2) \arr L^2_\cyl(\D,\C^2),\qquad 
	(Vv)(r,\theta) = u\big(r\cos\theta,r\sin\theta\big),
%	\end{aligned}
\]
and introduce the cylindrical Sobolev space by
\[
	H^1_\cyl(\D,\C^2) := V\big(H^1(\D,\C^2)\big) = \Big\{v \in L^2_\cyl(\D,\C^2) \colon \p_r v, r^{-1}(\p_\theta v) \in L^2_\cyl(\D,\C^2)\Big\}
\]
We consider the operator acting in the Hilbert space $L^2_\cyl(\D,\C^2)$ defined as
\begin{equation}\label{eqn:unitequivD}
	\wt\OpD := V \OpD V^{-1},\quad \dom{\wt\OpD} = V\big(\dom{\OpD}\big).
\end{equation}
Now, let us compute the action of $\wt\OpD$ on a function $v\in \dom{\wt\OpD}$. Notice that there exists $u\in \dom{\OpD}$ such that $v = Vu$ and the partial derivatives of $v$ with respect to the polar variables $(r,\tt)$ can be expressed through those of $u$ with respect to the Cartesian variables $(x_1,x_2)$ via the standard relations (for $\bx = \bx(r,\tt)$)
\[
\begin{aligned}
	(\p_r v)(r,\tt) &  =
	\sin\tt(\p_2 u)(\bx) + \cos\tt(\p_1 u)(\bx),
	\\
	r^{-1}(\p_\tt v)(r,\tt) 
	& =  
	\cos\tt(\p_2 u)(\bx) - \sin\tt(\p_1 u)(\bx),
\end{aligned}
\]
and the other way round
\[
\begin{aligned}
	(\p_1 u)(\bx) & = 
	\cos\tt (\p_r v)(r,\tt) - \sin\tt\frac{(\p_\tt v)(r,\tt)}{r},\\
	(\p_2 u)(\bx) &=
	\sin\tt (\p_r v)(r,\tt) + \cos\tt\frac{(\p_\tt v)(r,\tt)}{r}.
\end{aligned}
\]
Using the latter formul{\ae} we can express the action
of the differential expression $-\ii(\s\cdot\nabla)$ in polar coordinates as follows
(for $\bx = \bx(r,\tt)$)
\[
\begin{aligned}
	(-\ii(\s\cdot\nabla) u)(x) 
	& = 
	-\ii\begin{pmatrix} 
		\p_1 u_2(\bx) - \ii \p_2 u_2(\bx)\\
		\p_1 u_1(\bx) + \ii \p_2 u_1(\bx)
	\end{pmatrix}\\
	& =
	-\ii\begin{pmatrix} 
		e^{-\ii\tt} (\p_r v_2)(r,\tt) -
		\ii e^{-\ii\tt}r^{-1}(\p_\tt v_2)(r,\tt) 
		\\
		e^{\ii\tt} (\p_r v_1)(r,\tt) 
		+ \ii e^{\ii\tt}r^{-1}(\p_\tt v_1)(r,\tt)
	\end{pmatrix}.
\end{aligned}                        
\]
Note that a basic computation yields
\begin{equation}\label{eq:matrix_identity}
	\s\cdot \be_{\rm rad} 
	= 
	\cos\tt\s_1 + \sin\tt\s_2 
	= 
	\begin{pmatrix}
		0           & e^{-\ii\tt}\\
		e^{\ii\tt}  &0
	\end{pmatrix}.
\end{equation}
Hence, the operator
$\wt\OpD$ acts as
\begin{equation}\label{eqn:expopcyl}
\begin{aligned}
	\wt\OpD v & = 
	-\ii(\s\cdot \be_{\rm rad})
	\left(\p_r v + \frac{v - \s_3 \sfK v}{2r}\right),\\
	\dom{\wt\OpD} & = 
	\big\{ v\in H_{\rm cyl}^1(\D,\C^2)\colon v_2(1,\tt) = 
	\ii e^{\ii\tt} v_1(1,\tt)\big\},
\end{aligned}
\end{equation}
where $\sfK$ is the spin-orbit operator in the Hilbert space $L^2(\T;\C^2)$ defined as
\begin{equation}\label{eq:K}
	\sfK = -2\ii\p_\tt + \s_3,
	\qquad 
	\dom\sfK = H^1(\T,\C^2).
\end{equation}
Let us investigate the spectral properties of the spin-orbit operator $\sfK$.
\begin{prop}\label{prop:K} 
	Let the operator $\sfK$ be as in~\eqref{eq:K}.
	Then the following hold.
	\begin{myenum}
		\item\label{itm:K1} $\sfK$ is self-adjoint and has a compact resolvent.
		\item\label{itm:K2} $\spe{\sfK} = \{2k+1\}_{k\in\Z}$ 
		and $\cF_k := \ker\big(\sfK - (2k+1)\big) = \spn(\phi_k^+, \phi_k^-)$, where 
		\[
			\phi_k^+ = \frac1{\sqrt{2\pi}}
			\begin{pmatrix}
				e^{\ii k\tt}\\0
			\end{pmatrix}
			\quad\text{and}\quad 
			\phi_k^- 
			= \frac1{\sqrt{2\pi}}
			\begin{pmatrix}
			0\\ e^{\ii (k+1)\tt}
			\end{pmatrix}.
		\]
		\item\label{itm:K3} $(\s\cdot \be_{\rm rad}) \phi_k^\pm = \phi_k^\mp$
		and $\s_3 \phi_k^\pm = \pm \phi_k^\pm$.
\end{myenum}
\end{prop}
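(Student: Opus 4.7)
The plan is to treat each of the three items in turn, with most of the work being direct computation backed by standard Fourier analysis on the torus.

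For item~\ref{itm:K1}, I would argue that $\sfK$ is the sum of $-2\ii\p_\tt$, acting component-wise in $L^2(\T,\C^2)$ on the domain $H^1(\T,\C^2)$, and of the bounded, self-adjoint multiplication operator $\s_3$. The component-wise momentum operator is self-adjoint on $H^1(\T,\C^2)$ with compact resolvent (its resolvent is compact because of the compact embedding $H^1(\T,\C^2)\hookrightarrow L^2(\T,\C^2)$). By the Kato–Rellich theorem applied to a bounded symmetric perturbation, $\sfK$ is self-adjoint on the same domain, and the compactness of the resolvent is preserved (for instance by a standard resolvent-difference argument, or directly since $\sfK$ is diagonal in the Fourier basis and has eigenvalues escaping to $\pm\infty$).

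For items~\ref{itm:K2} and~\ref{itm:K3}, the strategy is pure calculation. I would plug the explicit functions $\phi_k^\pm$ into $\sfK$: for $\phi_k^+$ one gets $-2\ii\p_\tt (e^{\ii k\tt}/\sqrt{2\pi},0)^\top = (2k)(e^{\ii k\tt}/\sqrt{2\pi},0)^\top$ and $\s_3\phi_k^+ = \phi_k^+$, hence the eigenvalue $2k+1$; for $\phi_k^-$ the $-\ii\s_3$ contributes $-1$ while the derivative gives $2(k+1)$, again totalling $2k+1$. The identities in~\ref{itm:K3} follow by a two-line matrix multiplication using the closed-form expression~\eqref{eq:matrix_identity} for $\s\cdot\be_{\rm rad}$, which maps the $e^{\ii k\tt}$-mode in the first component to the $e^{\ii (k+1)\tt}$-mode in the second, and vice versa; $\s_3\phi_k^\pm=\pm\phi_k^\pm$ is immediate from the diagonal form of $\s_3$.

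The only genuinely substantive point is to check that $\{2k+1\}_{k\in\Z}$ exhausts the spectrum and that each eigenspace is exactly two-dimensional. I would do this through Fourier expansion: every $v = (v_1,v_2)^\top\in L^2(\T,\C^2)$ admits an expansion $v_j = \sum_{n\in\Z} a_{n,j}\, e^{\ii n\tt}/\sqrt{2\pi}$, and in this basis $\sfK$ acts as $(a_{n,1},a_{n,2})\mapsto ((2n+1)a_{n,1},(2n-1)a_{n,2})$. Reindexing the second component via $n=k+1$ shows that the eigenvalue $2k+1$ is attained precisely by the two Fourier modes that define $\phi_k^+$ and $\phi_k^-$, which gives both the spectrum and the full eigenspace. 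Since $\{\phi_k^+,\phi_k^-\}_{k\in\Z}$ is an orthonormal basis of $L^2(\T,\C^2)$ by Parseval, the spectral decomposition is complete and $\spe{\sfK}$ is as claimed with no residual or continuous part; I expect this completeness verification to be the only step requiring slightly more than a direct substitution.
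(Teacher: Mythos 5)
Your proposal is correct, and items (i) and (iii) follow the paper exactly: self-adjointness of $\sfK$ as a bounded symmetric perturbation of the momentum operator together with compactness of the embedding $H^1(\T,\C^2)\hookrightarrow L^2(\T,\C^2)$, and a two-line matrix computation based on~\eqref{eq:matrix_identity}. For item (ii) your route differs from the paper's: there, the eigenvalue equation is solved as a first-order ODE system, $(\phi^\pm)'=\tfrac{\ii}{2}(\lambda\mp1)\phi^\pm$, whose general solution combined with periodicity forces $\lambda=2k+1$ and yields $\cF_k=\spn(\phi_k^+,\phi_k^-)$ in one stroke; you instead verify the candidate eigenfunctions directly and then diagonalize $\sfK$ in the Fourier basis, observing that the mode $e^{\ii n\tt}$ in the first (resp.\ second) component carries the eigenvalue $2n+1$ (resp.\ $2n-1$), so after reindexing $n=k+1$ the value $2k+1$ is attained exactly by the two modes defining $\phi_k^\pm$. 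Your Fourier argument makes the exhaustion of the spectrum and the two-dimensionality of each eigenspace fully explicit (an orthonormal basis of eigenvectors plus compact resolvent leaves no room for further spectrum), whereas the paper's ODE argument obtains eigenvalues and eigenspaces simultaneously without invoking Parseval; both are equally elementary and complete. One cosmetic slip: in your computation for $\phi_k^-$ the term contributing $-1$ is $\s_3$ (its second diagonal entry), not ``$-\ii\s_3$''; the arithmetic $2(k+1)-1=2k+1$ is nevertheless correct.
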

\begin{proof}
	\noindent (i)
	The operator $\sfK$ is clearly self-adjoint in $L^2(\T,\C^2)$, because adding the matrix $\s_3$ 
	can be viewed as a symmetric bounded perturbation of an unbounded self-adjoint momentum operator $H^1(\T,\C^2)\ni \phi \mapsto -\ii \phi'$ in the 
	Hilbert space $L^2(\T,\C^2)$.
	As $\dom\sfK = H^1(\T,\C^2)$ is compactly embedded into $L^2(\T,\C^2)$ the resolvent of $\sfK$ is compact.
	\medskip
	
	\noindent (ii)
	Let $\phi = (\phi^+,\phi^-)^\top\in\dom\sfK$ and $\lambda\in\R$ be such that $\sfK \phi =\lambda \phi$.
	The eigenvalue equation on $\phi$ reads as follows
	\[
		(\phi^\pm)'  =  \frac{\ii}{2}
		\big(\lambda \mp 1\big)\phi^\pm.
	\]
	The generic solution of the above system of differential equations is given by
	\[
		\phi^\pm(\tt) = A_\pm \exp\big(\ii \tfrac{\lambda\mp 1}2 \tt\big), \qquad A_\pm\in\C.
	\]	
	Hence, the periodic boundary condition $\phi^\pm(0) = \phi^\pm(2\pi)$ implies that the eigenvalues of 
	$\sfK$ are exhausted by  
	$\lambda = 2k + 1$ for $k\in\Z$ and that $\{\phi_{k}^+,\phi_{k}^-\}$ is a basis of $\cF_k$.
	\medskip
	
	\noindent (iii) 
	These algebraic relations  are obtained {\it via} basic matrix calculus using~\eqref{eq:matrix_identity}.
\end{proof}
We are now ready to introduce subspaces of $\dom{\wt\OpD}$ that are invariant under its action.
The analysis of $\wt\OpD$ reduces to the study of its restrictions to each invariant subspace.
\begin{prop}
	There holds
	\[
		L^2_\cyl(\D, \C^2\big)\simeq 
		L^2_r(\I)\otimes L^2(\T,\C^2) 
		= 
		\oplus_{k\in\Z} \cE_k,
	\]
	where $\cE_k = L^2_r(\I)\otimes \cF_k$ and $L^2_r(\I):= L^2(\I;r\dd r)$. Moreover, the following hold true.
	\begin{myenum}
	\item\label{itm:rs1} For any $k\in\Z$,   
	\[
		d_k u := \wt\OpD u,\qquad
		\dom{d_k} := \dom{\wt\OpD} \cap \cE_k
	\]
	is a well-defined self-adjoint operator in the Hilbert space $\cE_k$. 
	\item\label{itm:rs2} For any $k \in \Z$, the operator $d_k$ is unitarily equivalent to the operator $\bd_k$ in the Hilbert space $L^2_r(\I,\C^2)$ defined as
	\begin{equation}\label{eq:op_dk}
	\begin{aligned}
		\bd_k& \! =\!
		\begin{pmatrix}
			0 	& -\ii \frac{\dd}{\dd r} - \ii\frac{k+1}{r}\\
			-\ii\frac{\dd}{\dd r} + \ii \frac{k}r	&0
		\end{pmatrix},\\ 
		\dom{\bd_k} &\! =\!
		\left\{u =(u_+,u_-) \colon
		u_\pm, u_\pm', \tfrac{ku_+}{r}, \tfrac{(k+1)u_-}{r} \in L^2_r(\I), 
		u_-(1) = \ii u_+(1)\right\}.
	\end{aligned}
	\end{equation}
	\item\label{itm:rs3}
	$\spe{\OpD} = \spe{\wt\OpD} = \bigcup_{k\in\Z}\spe{\bd_k}$.
\end{myenum}
\label{prop:fibdecdisk}
\end{prop}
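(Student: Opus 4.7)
The plan is to verify the claimed fiber decomposition by direct computation, using the algebraic relations from Proposition~\ref{prop:K} to show that the $\cE_k$'s are reducing subspaces for $\wt\OpD$ and to read off the explicit form of $\bd_k$ in the process.

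First, the Hilbert-space decomposition $L^2_\cyl(\D,\C^2) \simeq \oplus_{k\in\Z}\cE_k$ is a straightforward consequence of Proposition~\ref{prop:K}: since $\sfK$ is self-adjoint with compact resolvent, its eigenfunctions $\{\phi_k^+,\phi_k^-\}_{k\in\Z}$ form a complete orthonormal system in $L^2(\T,\C^2)$, whence $L^2(\T,\C^2) = \oplus_{k\in\Z}\cF_k$ and tensoring with $L^2_r(\I)$ gives the claim.

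Next, I would parametrise an arbitrary $v\in \cE_k\cap \dom{\wt\OpD}$ as
\[
v(r,\theta) = u_+(r)\phi_k^+(\theta) + u_-(r)\phi_k^-(\theta),
\]
and apply the formula~\eqref{eqn:expopcyl}. Using the relations $\sfK\phi_k^\pm = (2k+1)\phi_k^\pm$ and $\s_3\phi_k^\pm = \pm\phi_k^\pm$ from Proposition~\ref{prop:K}\,\ref{itm:K2}--\ref{itm:K3}, one computes
\[
\tfrac{v - \s_3\sfK v}{2r} = -\tfrac{k}{r}\,u_+\,\phi_k^+ + \tfrac{k+1}{r}\,u_-\,\phi_k^-,
\]
so that $\p_r v + \tfrac{v - \s_3\sfK v}{2r} = \bigl(u_+' - \tfrac{k}{r}u_+\bigr)\phi_k^+ + \bigl(u_-' + \tfrac{k+1}{r}u_-\bigr)\phi_k^-$. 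Multiplying by $-\ii(\s\cdot\be_{\rm rad})$, which swaps $\phi_k^+$ and $\phi_k^-$ by Proposition~\ref{prop:K}\,\ref{itm:K3}, shows that $\wt\OpD v$ again lies in $\cE_k$ and that its components satisfy exactly the matrix action given in~\eqref{eq:op_dk}. This simultaneously establishes the invariance of $\cE_k$ and identifies $\bd_k$. The boundary condition $v_2(1,\theta) = \ii e^{\ii\theta}v_1(1,\theta)$ translates, after equating Fourier coefficients, into $u_-(1) = \ii u_+(1)$, which matches $\dom{\bd_k}$.

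With invariance in hand, parts~\ref{itm:rs1} and~\ref{itm:rs2} are obtained as follows. The unitary map $V_k\colon\cE_k\to L^2_r(\I,\C^2)$ defined by $V_k(u_+\phi_k^+ + u_-\phi_k^-) = (u_+,u_-)^\top$ intertwines, by the computation above, the restriction of $\wt\OpD$ to $\cE_k$ with the operator $\bd_k$ defined in~\eqref{eq:op_dk}. Since $\cE_k$ is a reducing subspace of the self-adjoint operator $\wt\OpD$, its restriction $d_k$ is self-adjoint on $\cE_k$, and hence $\bd_k$ is self-adjoint on $L^2_r(\I,\C^2)$. Finally, part~\ref{itm:rs3} follows because $\wt\OpD = \oplus_{k\in\Z} d_k$ as an orthogonal direct sum, so its spectrum is the closure of $\bigcup_k \spe{\bd_k}$; combined with the unitary equivalence~\eqref{eqn:unitequivD} between $\OpD$ and $\wt\OpD$, this yields $\spe{\OpD} = \bigcup_{k\in\Z}\spe{\bd_k}$.

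The only delicate point is the careful bookkeeping in the matrix-valued computation of $\wt\OpD v$ on $\cE_k$ and, relatedly, verifying that $\dom{\wt\OpD}$ itself respects the decomposition (so that $d_k$'s are genuinely direct summands rather than merely invariant actions). The latter is handled by expanding any $v\in\dom{\wt\OpD}\subset H^1_\cyl(\D,\C^2)$ in the Fourier basis $\{\phi_k^\pm\}$ on $\T$, observing that the radial regularity conditions in~\eqref{eq:op_dk} are exactly the ones inherited from $H^1_\cyl(\D,\C^2)$ mode-by-mode, and that the boundary condition also decouples across modes.
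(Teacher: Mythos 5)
Your proposal is correct and follows essentially the same route as the paper: expand functions of $\cE_k$ in the basis $\{\phi_k^+,\phi_k^-\}$ of $\cF_k$, apply the polar-coordinate expression~\eqref{eqn:expopcyl} together with Proposition~\ref{prop:K} to see that $\wt\OpD$ preserves $\cE_k$ and acts there as~\eqref{eq:op_dk}, transfer via the obvious unitary $\cE_k\to L^2_r(\I,\C^2)$, and obtain the spectral identity from the direct-sum structure (the paper invokes \cite[Theorem XIII.85]{RS78} at this point, and leaves the self-adjointness of $d_k$ as an ``easy exercise'', which you instead justify by the reducing-subspace argument — a harmless variation). Your closing remark that the domain and boundary condition decouple mode-by-mode is exactly the point that makes the reducing-subspace claim rigorous, so no gap remains.
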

\begin{proof}
\noindent (i) Let us check that $d_k$ is well defined. Pick a function $u\in\dom{\wt\OpD}\cap \mathcal{E}_k$. By definition, $u$ writes as
\[
	u(r,\theta) = u_+(r)\phi_k^+(\theta) + u_-(r)\phi_k^-(\theta),
\]
and, since $u\in H_{\rm cyl}^1(\D,\C^2)$, we have $u_\pm,u_\pm',\frac{k}ru_+,\frac{k+1}r u_-\in L_r^2(\I)$. Applying the differential expression obtained in \eqref{eqn:expopcyl}, we get
\begin{equation}\label{eqn:expropocyl}
	\begin{aligned}
	(\wt \OpD u)(r,\theta) 	&\!=\! -\ii(\s\cdot \be_{\rm rad})
	\left(\p_r v + \frac{v - \s_3 \sfK v}{2r}\right)u(r,\theta)\\
				&=\! \left[-\ii u'_-(r)\! -\! \frac{\ii(k+1)}{r} u_-(r)\right]\!\phi_k^+(\theta)\! +\! \left[-\ii u_+'(r) \!+\!  \frac{\ii k}r u_+(r)\right]\!\phi_k^-(\theta).
	\end{aligned}
\end{equation}
It yields $\wt \OpD \left(\dom{\OpD}\cap\cE_k\right) \subset \cE_k$. It is now an easy exercise to show that $d_k$ is self-adjoint.
\medskip

\noindent(ii) Let us introduce the unitary transform
\[
	W_k \colon  \cE_k \arr L_r^2(\I,\C^2),\quad (W_ku)(r) = \Big((u(r,\cdot),\phi_k^+)_{L^2(\T,\C^2)}, 
		(u(r,\cdot),\phi_k^-)_{L^2(\T,\C^2)}\Big)^\top.
\]
For $u\in \cE_k$ it is clear that we have $\|W_ku\|_{L_r^2(\I,\C^2)} = \|u\|_{L^2_\cyl(\D,\C^2)}$ and we observe that 
\[
%	\begin{aligned}
	\bd_k = W_kd_k W_k^{-1},\qquad
	\dom{\bd_k} = W_k\left(\dom{d_k}\right).
%	\Big\{w =(w_1,w_2) \in L_r^2(\I,\C^2)& : w_1,w_2, w_1',w_2', \\&\quad\frac{k}r w_1, \frac{k+1}r w_2 \in L^2_r(\I), w_2(1) = \ii w_1(1)\Big\}.
%	\end{aligned}
\]
%
%By definition, $\bd_k$ is unitarily equivalent to $d_k$, satisfies $\dom{\bd_k} = W_k\left(\dom{d_k}\right)$ and acts as differential operator
%\[
%	\begin{pmatrix}
%			0 	& -\ii \frac{\dd}{\dd r} - \ii\frac{k+1}{r}\\
%			-\ii\frac{\dd}{\dd r} + \ii \frac{k}r	&0
%		\end{pmatrix}
%\]
%as it can be seen from \eqref{eqn:expropocyl}.
\medskip

\noindent (iii) The first equality is a consequence of \eqref{eqn:unitequivD}, while the second one is an application of \cite[Theorem XIII.85]{RS78}.
\end{proof}

%---------------------------------------------------
\subsection{Eigenstructure of the disk}
%---------------------------------------------------
%
Before describing the eigenstructure of the disk recall that $C$ denotes the charge conjugation operator introduced in \eqref{eqn:defchargeconj}. It is not difficult to see that $C$ is anti-unitary and maps $\dom{\bd_k}$ onto $\dom{\bd_{-(k+1)}}$ for all $k\in\Z$. Furthermore, a computation yields
\begin{equation}\label{eqn:conjdkC}
	C \bd_{-(k+1)}C = - \bd_k.
\end{equation}
In particular, $C^2 = 1_2$, which also reads $C^{-1} = C$. Combined with \eqref{eqn:conjdkC} and as the spectrum of $\bd_k$ is discrete one immediately observes that
\begin{equation}	\label{eqn:symspc}
	\spe{\bd_k} = - \spe{\bd_{-(k+1)}}.
\end{equation}
Hence, we can restrict ourselves to $k\geq0$.

\begin{lem}\label{lem:ineqfq}
	Let $k\in\N_0$. Let $\bd_k$ be the self-adjoint operator defined in~\eqref{eq:op_dk}.	Then for all $k\in\N$
	the following hold.
	\begin{myenum}
		\item $\dom{\bd_k} \subset \dom{\bd_0}$
		\item 
		$\|\bd_k u\|_{L^2_r(\I;\C^2)}^2 \geq 
		\|\bd_0 u\|_{L^2_r(\I;\C^2)}^2$
		for all $u \in \dom{\bd_k}$.	
	\end{myenum}	
\end{lem}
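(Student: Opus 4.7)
For statement (i), I will simply compare the two domains in~\eqref{eq:op_dk}. The conditions on $u_\pm$, $u_\pm'$, $u_-/r$ and the infinite mass boundary condition $u_-(1)=\ii u_+(1)$ appear identically in both definitions (the factor $k+1$ is nonzero for every $k\geq 0$). The only genuinely new constraint in $\dom{\bd_k}$ for $k\geq 1$ is $u_+/r\in L^2_r(\I)$, coming from the requirement $k u_+/r\in L^2_r(\I)$. Hence $\dom{\bd_k}\subset\dom{\bd_0}$.

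For statement (ii), my plan is to expand both norms and subtract. Componentwise one reads off
\[
\|\bd_k u\|_{L^2_r(\I;\C^2)}^2 = \int_0^1\Big(\big|u_-'+\tfrac{(k+1)u_-}{r}\big|^2 + \big|u_+'-\tfrac{k u_+}{r}\big|^2\Big)\,r\,dr,
\]
and the same expression with $k=0$ yields $\|\bd_0 u\|^2$. The pure squares $|u_\pm'|^2 r$ cancel, the squares in $1/r$ combine into the manifestly non-negative bulk
\[
k(k+2)\int_0^1\frac{|u_-|^2}{r}\,dr + k^2\int_0^1\frac{|u_+|^2}{r}\,dr,
\]
and the cross terms contribute $2k\int_0^1\Re(u_-'\overline{u_-})\,dr - 2k\int_0^1\Re(u_+'\overline{u_+})\,dr$. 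Since $\Re(u_\pm'\overline{u_\pm})=\tfrac{1}{2}(|u_\pm|^2)'$, these cross terms integrate to pure boundary contributions $k\bigl(|u_-(1)|^2-|u_+(1)|^2\bigr)-k\bigl(|u_-(0^+)|^2-|u_+(0^+)|^2\bigr)$.

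The key step will be to show that both boundary contributions vanish for $u\in\dom{\bd_k}$, $k\geq 1$. The $r=1$ contribution dies immediately, since $u_-(1)=\ii u_+(1)$ forces $|u_-(1)|=|u_+(1)|$. The main obstacle, as I see it, is the $r=0$ contribution: I need $u_\pm(0^+)=0$, which is not automatic from the weighted $L^2$ conditions alone. I expect this to follow from a Hardy-type argument: using $u_\pm'\in L^2_r(\I)$ and $u_\pm/r\in L^2_r(\I)$ (both genuinely available once $k\geq 1$), Cauchy--Schwarz gives
\[
\int_0^1 |u_\pm'(r)\,\overline{u_\pm(r)}|\,dr \le \Big(\int_0^1|u_\pm'|^2 r\,dr\Big)^{1/2}\Big(\int_0^1|u_\pm|^2/r\,dr\Big)^{1/2}<\infty,
\]
so $|u_\pm|^2$ extends absolutely continuously to $[0,1]$ and its limit at the origin must vanish, as otherwise $|u_\pm|^2/r$ would fail to be integrable near $0$. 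Once this vanishing is in hand, the difference $\|\bd_k u\|^2-\|\bd_0 u\|^2$ reduces to the non-negative bulk displayed above, and the inequality follows.
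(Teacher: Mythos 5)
Your proposal is correct and takes essentially the same route as the paper's proof: expand the squares, integrate the cross terms into boundary contributions, cancel the $r=1$ terms using $u_-(1)=\ii u_+(1)$, and use $u_\pm(0^+)=0$ to kill the contributions at the origin. The only differences are cosmetic — the paper bounds each component separately by dropping the non-negative $1/r$-terms early instead of keeping the full bulk, and it merely asserts $u(0)=0$ ``for integrability reasons'' where you make the Cauchy--Schwarz/Hardy-type justification explicit.
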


\begin{proof}
	Let $k\in \N$ and 
	$u= (u_+,u_-)^\top\in\dom{\bd_k}$. It is clear that $u\in\dom{\bd_0}$ and that for integrability reasons $u(0) = 0$. Hence, we have
	\begin{equation}\label{eq:1}
	\begin{aligned}
		\left\|u_+' - \tfrac{k}r u_+\right\|_{L^2_r(\I)}^2 & 
		= 
		\left\|u_+'\right\|_{L^2_r(\I)}^2 -
		2k\Re\left(u_+',\tfrac{1}{r}u_+\right)_{L^2_r(\I)}
		+ 
		k^2
		\left\| \tfrac{1}{r} u_+\right\|_{L^2_r(\I)}^2\\
		& \ge
		\left\|u_+'\right\|_{L^2_r(\I)}^2 -
		2k\Re\int_0^1 u_+'\ov{u_+}\dd r\\
		& =
		\left\|u_+'\right\|_{L^2_r(\I)}^2 
		-
		k\int_0^1(|u_+|^2)'\dd r\\
		& =
		\left\|u_+'\right\|_{L^2_r(\I)}^2 
		-
		k|u_+(1)|^2.
	\end{aligned}
	\end{equation}
	Analogously, we get 
	\begin{equation}\label{eq:2}
	\begin{aligned}
		\left\|u_-' + \tfrac{k+1}r u_-\right\|_{L^2_r(\I)}^2 & 
		\ge
		\left\|
			u_-' + \tfrac{1}r u_-
		\right\|_{L^2_r(\I)}^2 
		+
		2k\Re\left(u_-',\tfrac{1}{r}u_-\right)_{L^2_r(\I)}\\
		& =
		\left\|
		u_-' + \tfrac{1}r u_-
		\right\|_{L^2_r(\I)}^2 
		+
		k\int_0^1(|u_-|^2)'\dd r\\
		& 
		=
		\left\|
		u_-' + \tfrac{1}r u_-
		\right\|_{L^2_r(\I)}^2 
		+
		k|u_-(1)|^2
	\end{aligned}
	\end{equation}
	Combining~\eqref{eq:1} and~\eqref{eq:2}
	with the boundary condition $u_-(1) = \ii u_+(1)$ we get
	\[
	\begin{aligned}
		\|\bd_ku\|^2_{L^2_r(\I,\C^2)} 
		& \ge
		\left\|
			u_+'\right\|_{L^2_r(\I)}^2 
		+
		\left\|
		u_-' + \tfrac{1}r u_-
		\right\|_{L^2_r(\I)}^2 
		+ k\big(|u_-(1)|^2 - |u_+(1)|^2\big)\\
		& = 
		\|\bd_0u\|_{L^2_r(\I,\C^2)}^2
		+ k\big(|u_-(1)|^2 - |u_+(1)|^2\big)
		= \|\bd_0 u\|_{L^2_r(\I,\C^2)}^2.\qedhere
	\end{aligned}
	\]
\end{proof}

Now, we have all the tools to prove Proposition \ref{prop:disk}.

\begin{proof}[Proof of Proposition \ref{prop:disk}]
	As a direct consequence of Lemma~\ref{lem:ineqfq} and the min-max principle, we obtain that
	\[
		\mu_1(\bd_k^2) \geq \mu_1(\bd_0^2) 
		= \mu_\D^2.
	\]
	Thus, by Proposition \ref{prop:fibdecdisk}\,(iii) and Equation~\eqref{eqn:symspc}, in order to investigate the first eigenvalue of $\OpD$, we only have to focus on the operator $\bd_0$.
	
	 Let $\mu > 0$ be an eigenvalue of $\bd_0$ and $u$ be an associated eigenfunction. 
	 In particular, $u = (u_+,u_-)^\top \in \dom{\bd_0^2}$ and we have
	 \[
		0 = (\bd_0 + \mu)(\bd_0 - \mu) u = \begin{pmatrix}
		-u_+'' - \frac{u_+'}{r} - \mu^2 \\ -u_-'' - \frac{u_-'}{r} + \frac{u_-}{r^2} - \mu^2\end{pmatrix} .
	\]
	Hence, we obtain
	\[
		u_+(r) 
		= 
		a_+J_0(\mu r) + b_+Y_{0}(\mu r)
		\quad\text{and}\quad 
		u_-(r) = 
		a_- J_1(\mu r) + b_- Y_1(\mu r),
	\]
	with some constants $a_\pm,b_\pm \in\C$ and where $J_\nu$ and $Y_\nu$ ($\nu =0,1$) denote the Bessel function of the first kind of order $\nu$ and the Bessel function of the second kind of order $\nu$, respectively.
	Taking into account that
	\[
		\lim_{r\arr 0^+} r^2 |Y_0'(r)|^2 {=}  
		 \frac{4}{\pi^2},\quad \lim_{r\arr 0^+}r^4 |Y_1'(r)|^2 {=}\frac{4}{\pi^2},
	\]
  (see \cite[\S 10.7(i)]{OLBC10}), the condition $u\in \dom{\bd_0}$ implies $b_\pm = 0$ or, in other words,
   \begin{equation*}\label{key}
    	u_+(r) = a_+ J_0(\mu r)  	
   	\quad\text{and}\quad 
   	u_-(r) = a_- J_1(\mu r). 
   \end{equation*} 
	Now, as $u$ satisfies the eigenvalue equation $\bd_0u = \mu u$ we get 
	$u_+' = \ii\mu u_-$ and 
	the identity
	\[
		a_-\mu J_1(\mu r) = \ii \mu a_+ J_1(\mu r)
	\]
	holds for all $r\in \I$. 
	In particular, we obtain $a_- = \ii a_+$ which gives
	\begin{equation}\label{eq:phi}
		u = a_+
		\begin{pmatrix}
			J_0(\mu r)\\	\ii J_1(\mu r)
		\end{pmatrix}.
	\end{equation}
	Now, the boundary condition $u_-(1) = \ii u_+(1)$ reads as
	\begin{equation}\label{eq:secular}
		J_0(\mu) = J_1(\mu),
	\end{equation}
	which gives the eigenvalue equation, whose first
	positive root is the principal eigenvalue of $\OpD$.
	An eigenfunction of $\wt\OpD$ corresponding
	to the eigenvalue $\mu_\D$ is given in polar
	coordinates by
	\[
		w(r,\theta) = u\otimes \phi_0^- = \frac1{\sqrt{2\pi}}\begin{pmatrix}J_0(r\mu_\D )\\\ii e^{\ii \theta}J_1(r\mu_\D )\end{pmatrix}
	\]
	where $\phi_0^-$ is as in Proposition~\ref{prop:K}\,(ii), $u$ is as in~\eqref{eq:phi} (with $a_+=1$) and $\mu_\D$ is the smallest
	positive root of~\eqref{eq:secular}.
\end{proof}

\section{Proof of Proposition~\ref{prop:symm}} 
\label{app:prop}
\noindent\emph{Step 1.} For any $z\in\p\Omega$, the map
$\ov\Omega \ni y \to |y-z|$ is continuous. 
Hence, the maps defined as
\[
	 r_\rmi :=  \ov\Omg\ni  y \mapsto
 	\inf_{z\in\p\Omega}|y - z|,
	\qquad 
	r_\rmo := \ov\Omg\ni  y \mapsto
	 \sup_{z\in\p\Omega}|y-z|,
\]
are continuous on $\ov\Omega$ as well and they attain their upper and lower bounds. In particular, there exist $y_\rmi, y_\rmo \in \ov\Omega$ such that
\[
	r_\rmi(y_\rmi)=	\max_{y\in\ov\Omega}r_\rmi(y),
	\qquad 
	r_\rmo(y_\rmo)= \min_{y\in\ov\Omega}r_\rmo(y).
\]
\medskip

\noindent\emph{Step 2.} Assume that $\Omega$ has an axis of symmetry $\Lambda$. By Step 1 there exist $y_\rmi,y_\rmo \in \ov\Omega$ such that $r_\rmi(y_\rmi) = \sup_{y\in \Omega}r_\rmi(y)$ and $r_\rmo(y_\rmo) = \inf_{y\in \Omega}r_\rmo(y)$. 
Our aim is to show that $y_\rmi,y_\rmo$ can be both chosen in $\Lambda$. Let us suppose that $y_\rmi,y_\rmo\notin\Lambda$ and define the reflection $\cR_\Lambda\colon \ov\Omg\arr\ov\Omg$ with respect to $\Lambda$. Remark that $\cR_\Lambda y_\rmi$ and $\cR_\Lambda y_\rmo$ also satisfy $r_\rmi(\cR_\Lambda y_\rmi) = \max_{y\in \ov\Omega}r_\rmi(y)$ and $r_\rmo(\cR_\Lambda y_\rmo) = \min_{y\in \ov\Omega}r_\rmo(y)$. Set $\widetilde{y}_\rmi := \frac12 y_\rmi + \frac12 \cR_\Lambda y_\rmi$ and $\widetilde{y}_\rmo := \frac12 y_\rmo + \frac12 \cR_\Lambda y_\rmo$. As $\Omega$ is convex we have $\widetilde{y}_\rmi, \widetilde{y}_\rmo \in \ov\Omega$. Also by convexity of $\Omega$, we get 
\begin{equation}\label{eqn:convball}
	\frac12 \D_{r_\rmi(y_\rmi)}(y_\rmi) + \frac12\D_{r_\rmi(y_\rmi)}(\cR_\Lambda y_\rmi) = \D_{r_\rmi(y_\rmi)}(\wt{y}_\rmi) \subset \Omega,
\end{equation}
where $\D_r(y)$ denotes the disk of radius $r>0$
centred at $y\in \R^2$. Now, \eqref{eqn:convball} implies $r_\rmi(\widetilde{y}_\rmi) \geq r_\rmi(y_\rmi)$ and we obtain $r_\rmi(\widetilde{y}_\rmi) = \max_{y\in\ov\Omega} r_\rmi(y)$.

Similarly, by convexity of $\Omega$, we get
\begin{equation*}
\frac12 \D_{r_\rmo(y_\rmo)}(y_\rmo) + \frac12\D_{r_\rmo(y_\rmo)}(\cR_\Lambda y_\rmo) = \D_{r_\rmo(y_\rmo)}(\wt{y}_\rmo) \supset \Omega.
\end{equation*}
In particular, $r_\rmo(\widetilde{y}_\rmo) \leq \min_{y\in\ov\Omega}r_\rmo(y)$ and we have equality in this inequality.
\medskip

\noindent\emph{Step 3.}
Suppose now that $\Omega$ has two axes of symmetry $\Lambda_1$ and $\Lambda_2$. Let $y_\Lambda\in\Omg$ be the unique point of intersection of these axes. 
Thanks to Steps 1 and 2 for all $y\in \Omega$ we necessarily have $r_\rmi(y) \leq r_\rmi(y_\Lambda)$ and $r_\rmo(y) \geq r_\rmo(y_\Lambda)$.
Next, define the function
\[
	\cG(r_1,r_2)
	:= 
	\left(\frac{|\Omega|+ \pi r_1^2}{2\pi}\right)^{\frac12} \exp\left(2(r_\rmc -r_2)\Phi(r_1,r_\rmc)\right),\quad r_1 < r_2, r_\rmc < r_2.
\]
Remark that  $\cG$ is a non-decreasing function of $r_1$ whereas it is a non-increasing function of $r_2$.  Now, we have
\[
	\cF_\rmc(\Omega -y) 
	= 
	\cG\big(r_\rmi(y),r_\rmo(y)\big) 
	\leq 
	\cG\big(r_\rmi(y_\rmi),r_\rmo(y_\rmo)\big) 
	= 
	\cF_\rmc(\Omega - y_\Lambda).
\]
Hence, $\cF_\rmc^\star(\Omega) =\sup_{y\in\Omega} \cF_\rmc(\Omega-y)= \cF_\rmc(\Omega -y_\Lambda)$, by which the proof is concluded.

\end{document}